\DeclareFontFamily{U}{shuffle}{}
\DeclareFontShape{U}{shuffle}{m}{n}{ <-8>shuffle7 <8->shuffle10}{}
\newcommand{\sha}{\shuffle}
\newcommand{\ES}{\mathsf {ES}}
\newcommand{\MZV}{\mathsf {MZV}}
\newcommand{\MtV}{\mathsf {MtV}}
\newcommand{\MTV}{\mathsf {MTV}}
\newcommand{\MSV}{\mathsf {MSV}}
\newcommand{\MMV}{\mathsf {MMV}}
\newcommand{\MMVo}{\mathsf {MMVo}}
\newcommand{\MMVe}{\mathsf {MMVe}}
\newcommand{\gd}{\delta}
\newcommand{\lra}{\longrightarrow}
\newcommand{\Lra}{\Longrightarrow}
\newcommand\Res{{\rm Res}}
\newcommand\setX{{\mathsf{X}}}
\newcommand\fA{{\mathfrak{A}}}
\newcommand\evaM{{\texttt{M}}}
\newcommand\evaML{{\text{\em{\texttt{M}}}}}
\newcommand\z{{\texttt{z}}}
\newcommand\tx{{\texttt{x}}}
\newcommand\txp{{\tx_1}} % textstyle x positive 1
\newcommand\txn{{\tx_{-1}}} % textstyle x negative 1
\newcommand\om{{\omega}}
\newcommand\omn{\omega_{-1}}
\newcommand\omz{\omega_0}
\newcommand\omp{\omega_{1}}
\newcommand\eps{{\varepsilon}}
\newcommand\bfeps{{\boldsymbol \eps}}
\newcommand\bfs{{\bf s}}
\newcommand\bfn{{\bf n}}
\newcommand\bfw{{\bf w}}
\newcommand\bfj{{\bf j}}
\newcommand\bfk{{\bf k}}
\newcommand\bfl{{\bf l}}
\newcommand\bfq{{\bf q}}
\newcommand\bfu{{\bf u}}
\newcommand\bfv{{\bf v}}
\newcommand{\myone}{{1}}
\def\int{\displaystyle\!int}
\def\lim{\displaystyle\!lim}
\def\sum{\displaystyle\!sum}
\def\sup{\displaystyle\!sup}
\def\inf{\displaystyle\!inf}
\def\cap{\displaystyle\!cap}
\def\max{\displaystyle\!max}
\def\min{\displaystyle\!min}
\def\frac{\displaystyle\!frac}
\let\oldsection\section
\renewcommand\section{\setcounter{equation}{0}\oldsection}
\DeclareMathOperator*{\dep}{dep}
\newcommand\xx{\mbox{\bfseries \itshape x}}
\def\R{\mathbb{R}}
\def\N{\mathbb{N}}\def\Z{\mathbb{Z}}
\def\Q{\mathbb{Q}}
\def\su{\sum\limits_{n=1}^\infty}
\def\t{\widetilde{t}}
\def\S{\widetilde{S}}
\def\tt{\left(\frac{1-t}{1+t} \right)}
\def\xx{\left(\frac{1-x}{1+x} \right)}
\def\ol{\overline}
\theoremstyle{plain}
\newtheorem{thm}{Theorem}[section]
\newtheorem{lem}[thm]{Lemma}
\newtheorem{cor}[thm]{Corollary}
\newtheorem{con}[thm]{Conjecture}
\newtheorem{pro}[thm]{Proposition}
\theoremstyle{definition}
\newtheorem{defn}{Definition}[section]
\newtheorem{re}[thm]{Remark}
\newtheorem{exa}[thm]{Example}
\begin{document}
%%%%%%%%%%%%%%%%%%%% title %%%%%%%%%%%%%%%%%%%%%%%%%%%%%%%%%%%%%%%%%%%%%%%%
\title{\bf Variants of Multiple Zeta Values with Even and Odd Summation Indices}
%{\bf Evaluations For Several $T$-Variants Of Multiple Zeta Values Of Kaneko-Yamamoto Type}

%submitted to Math. Zeit., 07/12/2020

\author{
{Ce Xu${}^{a,}$\thanks{Email: cexu2020@ahnu.edu.cn}\quad and Jianqiang Zhao${}^{b,}$\thanks{Email: zhaoj@ihes.fr}}\\[1mm]
\small a. School of Math. and Statistics, Anhui Normal University, Wuhu 241000, P.R. China\\
\small b. Department of Mathematics, The Bishop's School, La Jolla, CA 92037, USA\\
[5mm]
Dedicated to professor Masanobu Kaneko on the occasion of his 60th birthday}

\date{}
\maketitle

\noindent{\bf Abstract.} In this paper, we define and study a variant of multiple zeta values of level 2 (which is called multiple mixed values or multiple $M$-values, MMVs for short), which forms a subspace of the space of alternating multiple zeta values. This variant includes both Hoffman's multiple $t$-values and Kaneko-Tsumura's multiple $T$-values as special cases.
We set up the algebra framework for the double shuffle relations (DBSFs) of the MMVs, and exhibits nice properties such as duality, integral shuffle relation, series stuffle relation, etc., similar to ordinary multiple zeta values.
Moreover, we study several $T$-variants of Kaneko-Yamamoto type multiple zeta values by establishing some explicit relations between these $T$-variants and Kaneko-Tsumura $\psi$-values. Furthermore, we prove that all Kaneko-Tsumura $\psi$-values can be expressed in terms of Kaneko-Tsumura multiple $T$-values by using multiple associated integrals, and find some duality formulas for Kaneko-Tsumura $\psi$-values. We also discuss the explicit evaluations for a kind of MMVs of depth two and three by using the method of contour integral and residue theorem. Finally, we investigate the dimensions of a few interesting subspaces of MMVs for small weights.

\medskip
\noindent{\bf Keywords}: Multiple zeta value, multiple mixed values, Hoffman multiple $t$-value, Kaneko-Tsumura $\psi$-function, Kaneko-Tsumura multiple $T$-value, multiple associated integral.

%\medskip
%\noindent{\bf Declarations.}

%\textit{Funding.} Not applicable

%\textit{Conflicts of interest/Competing interests.} Not applicable

%\textit{Availability of data and material.} Not applicable

%\textit{Code availability.} MAPLE

\medskip

\noindent{\bf AMS Subject Classifications (2020):} 11M06, 11M32, 11M99, 11G55, 06A11.
%\tableofcontents

%\setcounter{section}{-1}
\section{Introduction and Notations}
Between late 1742 and early 1743, Euler first touched on the subject of the double zeta star values in a series of correspondence with Goldbach. In modern notation, these are defined as follows:
\begin{equation*}
\zeta^*(r,s)=\sum_{m=1}^\infty \sum_{n\ge m} \frac{1}{m^r n^s}.
\end{equation*}
Euler returned to the same subject after about 30 years and discovered the now famous Euler's decomposition formula in \cite{Euler1776}. More than two hundred years later, these objects were generalized to the so-called multiple zeta values (MZVs) independently at almost the same time by Hoffman \cite{H1992} and Zagier \cite{DZ1994}:
\begin{equation}\label{equ:MZVdefn}
 \zeta(k_1,\ldots,k_r):=\sum\limits_{0<m_1<\cdots<m_r} \frac{1}{m_1^{k_1}\cdots m_r^{k_r}}
\end{equation}
for any positive integers $k_1,\ldots,k_r$ such that $k_2\ge 2$.
The primary motivation of this paper is the study a number of variations of the MZVs when we allow only even or odd indices $k_1,\ldots,k_r$ in \eqref{equ:MZVdefn}.

We begin with some basic notations. A finite sequence $\bfk=\bfk_r:= (k_1,\ldots, k_r)$ of positive integers is called a \emph{composition}. As usual, we put
\[|\bfk|:=k_1+\cdots+k_r,\quad \dep(\bfk):=r,\]
and call them the weight and the depth of $\bfk$, respectively. If $k_r>1$, $\bfk$ is called \emph{admissible}. For ${\bfk_r}:= (k_1,\ldots, k_r)$, set ${\bfk_0}:=\emptyset$ and $({\bfk_r})_{+}:=(k_1,\ldots,k_{r-1},k_r+1)$.

\subsection{Multiple harmonic sums and multiple zeta values}
For a composition $\bfk=(k_1,\ldots,k_r)$ and positive integer $n$, the multiple harmonic sums (MHSs) and multiple harmonic star sums (MHSSs) are defined by
\begin{align*}
\zeta_n(k_1,k_2,\ldots,k_r):=\sum\limits_{0<m_1<\cdots<m_r\leq n } \frac{1}{m_1^{k_1}m_2^{k_2}\cdots m_r^{k_r}}
\end{align*}
and
\begin{align*}
\zeta^\star_n(k_1,k_2,\ldots,k_r):=\sum\limits_{0<m_1\leq \cdots\leq m_r\leq n} \frac{1}{m_1^{k_1}m_2^{k_2}\cdots m_r^{k_r}},
\end{align*}
respectively. If $n<k$ then ${\zeta_n}(\bfk):=0$ and ${\zeta _n}(\emptyset )={\zeta^\star _n}(\emptyset ):=1$.

For an admissible composition $\bfk$, the multiple zeta values (abbr. MZVs) and the multiple zeta-star value (abbr. MZSVs) are defined by
\begin{equation*}
\zeta(\bfk):=\lim_{n\to\infty } \zeta_n(\bfk)
\qquad\text{and}\qquad
\zeta^\star(\bfk):=\lim_{n\to\infty } \zeta_n^\star(\bfk),
\end{equation*}
respectively. The systematic study of multiple zeta values began in the early 1990s with the works of Hoffman \cite{H1992} and Zagier \cite{DZ1994}. Due to their surprising and sometimes mysterious appearance in the study of many branches of mathematics and theoretical physics, these special values have attracted a lot of attention and interest in past three decades (for example, see the book by the second author \cite{Z2016}).

Recently, Kaneko and Yamamoto \cite{KY2018} introduced and studied a new kind of multiple zeta values.
For any two compositions of positive integers $\bfk=(k_1,\ldots,k_r)$ and $\bfl=(l_1,\ldots,l_s)$, define
\begin{align}
\zeta(\bfk\circledast{\bf l^\star})&:=\sum\limits_{0<m_1<\cdots<m_r=n_s\geq \cdots \geq n_1 > 0} \frac{1}{m_1^{k_1}\cdots m_r^{k_r}n_1^{l_1}\cdots n_s^{l_s}}\nonumber\\
&=\su \frac{\zeta_{n-1}(k_1,\ldots,k_{r-1})\zeta^\star_n(l_1,\ldots,l_{s-1})}{n^{k_r+l_s}}.
\end{align}
We call them \emph{Kaneko-Yamamoto multiple zeta values} (K-Y MZVs for short). It is clear that the left-hand side is a certain integral which can be written as a $\Z$-linear combination of MZVs. The same holds if we use the series expression on the right. Kaneko and Yamamoto presented a new ``integral=series" type identity of multiple zeta values, and conjectured that this identity is enough to describe all linear relations of multiple zeta values over $\mathbb{Q}$. In \cite{KY2018}, ``$\bfk\circledast\bfl$" is defined by the `circle harmonic shuffle product' of compositions $\bfk=(k_1,\ldots,k_r)$ and $\bfl=(l_1,\ldots,l_s)$.

\subsection{Multiple mixed values}
Several other variants of MZVs have been introduced and studied.
In recent papers \cite{KTA2018,KTA2019}, Kaneko and Tsumura introduced and studied a new kind of
multiple zeta values of level two, called \emph{multiple T-values} (MTVs), defined by
\begin{align}
T(k_1,k_2,\ldots,k_r):&=2^r \sum_{0<m_1<\cdots<m_r\atop m_i\equiv i\ {\rm mod}\ 2} \frac{1}{m_1^{k_1}m_2^{k_2}\cdots m_r^{k_r}}\nonumber\\
&=2^r\sum\limits_{0<n_1<\cdots<n_r} \frac{1}{(2n_1-1)^{k_1}(2n_2-2)^{k_2}\cdots (2n_r-r)^{k_r}}.
\end{align}
This is in contrast to
Hoffman's \emph{multiple $t$-value} (MtVs) defined in \cite{H2019} as follows:
\begin{align}
t(k_1,k_2,\ldots,k_r):&=\sum_{0<m_1<\cdots<m_r\atop \forall m_i: odd} \frac{1}{m_1^{k_1}m_2^{k_2}\cdots m_r^{k_r}}\nonumber\\
&=\sum\limits_{0<n_1<\cdots<n_r} \frac{1}{(2n_1-1)^{k_1}(2n_2-1)^{k_2}\cdots (2n_r-1)^{k_r}}.
\end{align}
It is obvious that MtVs satisfy the series stuffle relation, however, it is nontrivial to see that MTVs can be expressed using iterated integral and satisfy both the duality relations (see \cite[Thm. 3.1]{KTA2019}) and the integral shuffle relations (see \cite[Thm. 2.1]{KTA2019}).

All the above variants of MZVs are level two special cases of the objects considered by Yuan and the second author in \cite[(2.1)]{YuanZh2014a} where they define more generally the MZVs of level $N$. To reformulate their definition at level two, for any admissible composition of positive integers $\bfk=(k_1,k_2,\ldots,k_r)$
and $\bfeps=(\eps_1, \dots, \eps_r)\in\{\pm 1\}^r$ we define the \emph{multiple mixed values} (or \emph{multiple $M$-values}, MMVs for short)
\begin{equation}\label{equ:MMVdefn}
M(\bfk;\bfeps):=\sum_{0<m_1<\cdots<m_r} \frac{(1+\eps_1(-1)^{m_1}) \cdots (1+\eps_r(-1)^{m_r})}{m_1^{k_1} \cdots m_r^{k_r}}=\sum_{\substack{0<n_1<\cdots<n_r\\ 2| n_j \text{ if } \eps_j=1 \\ 2\nmid n_j \text{ if } \eps_j=-1
}} \frac{2^r}{n_1^{k_1} \cdots n_r^{k_r}}.
\end{equation}
As usual, we call $k_1+\cdots+k_r$ and $r$ the \emph{weight} and \emph{depth}, respectively.
For convenience, we say the \emph{signature} of $k_j$ is \emph{even} or \emph{odd} depending on whether $\eps_j$ is 1 or $-1$.
It is again apparent that MMVs satisfy the series stuffle relations.
Moreover, MMVs can be expressed using iterated integrals which lead to the integral shuffle relations. Similar to MZVs, these can be extended to regularized shuffle relations using regularized non-admissible values (essentially some polynomials of $T$, see Theorem \ref{thm:RDSoverR}).

Note that MTVs are MMVs with alternating signatures starting with the odd signature (i.e., smallest summation index
in \eqref{equ:MMVdefn} is odd). To extend the duality relation to MMVs, we have to restrict to a subset. We say a MMV has \emph{odd} (resp.~\emph{even}) \emph{signature} (MMVo, resp.~MMVe, for short) if its smallest summation index in \eqref{equ:MMVdefn} is odd (resp.~even). See Theorem~\ref{thm:dualMMVo} for details.
On the other hand, a class of MMVs that is opposite to MTVs, called \emph{multiple S-values} (MSVs),
can be defined as follows. For any admissible composition $\bfk=(k_1,k_2,\ldots,k_r)$,
\begin{align}
S(k_1,k_2,\ldots,k_r):&=2^r \sum_{0<m_1<\cdots<m_r\atop m_i\equiv i-1\ {\rm mod}\ 2} \frac{1}{m_1^{k_1}m_2^{k_2}\cdots m_r^{k_r}}\nonumber\\
&=2^r \sum_{0<n_1<\cdots<n_r} \frac{1}{(2n_1)^{k_1}(2n_2-1)^{k_2}\cdots (2n_r-r+1)^{k_r}}.
\end{align}
Namely, MSVs are MMVs with alternating signatures starting with the even signature.
It is clear that every MMV can be written as a linear combination of alternating multiple zeta values
(also referred to as Euler sums or colored multiple zeta values) defined as follows.
For $\bfk\in\N^r$ and $\bfeps\in\{\pm\}^r$, if $(\bfk_r,\eps_r)\ne(1,1)$ (called \emph{admissible} case) then
\begin{equation*}
 \zeta(\bfk;\bfeps):=\sum\limits_{0<m_1<\cdots<m_r}\prod\limits_{j=1}^r \eps_j^{m_j} m_j^{-k_j}.
\end{equation*}
We may compactly indicate the presence of an alternating sign as follows.
Whenever $\eps_j=-1$,  we place a bar over the corresponding integer exponent $k_j$. For example,
\begin{equation*}
\zeta(\bar 2,3,\bar 1,4)=\zeta(  2,3,1,4;-1,1,-1,1).
\end{equation*}

\subsection{Variants of multiple harmonic sums}
For positive integers $m$ and $n$ such that $n\ge m$, we define
\begin{align*}
&D_{n,m} :=
\left\{
  \begin{array}{ll}
\Big\{(n_1,n_2,\dotsc,n_m)\in\N^{m} \mid 0<n_1\leq n_2< n_3\leq \cdots \leq n_{m-1}<n_{m}\leq n \Big\},\phantom{\frac12}\ & \hbox{if $2\nmid m$;} \\
\Big\{(n_1,n_2,\dotsc,n_m)\in\N^{m} \mid 0<n_1\leq n_2< n_3\leq \cdots <n_{m-1}\leq n_{m}<n \Big\},\phantom{\frac12}\ & \hbox{if $2\mid m$,}
  \end{array}
\right.  \\
&E_{n,m} :=
\left\{
  \begin{array}{ll}
\Big\{(n_1,n_2,\dotsc,n_{m})\in\N^{m}\mid 1\leq n_1<n_2\leq n_3< \cdots< n_{m-1}\leq n_{m}< n \Big\},\phantom{\frac12}\ & \hbox{if $2\nmid m$;} \\
\Big\{(n_1,n_2,\dotsc,n_{m})\in\N^{m}\mid 1\leq n_1<n_2\leq n_3< \cdots \leq n_{m-1}< n_{m}\leq n \Big\}, \phantom{\frac12}\ & \hbox{if $2\mid m$.}
  \end{array}
\right.
\end{align*}

\begin{defn} For positive integer $m$, define
\begin{align}
&T_n({\bfk_{2m-1}}):= \sum_{\bfn\in D_{n,2m-1}} \frac{2^{2m-1}}{(\prod_{j=1}^{m-1} (2n_{2j-1}-1)^{k_{2j-1}}(2n_{2j})^{k_{2j}})(2n_{2m-1}-1)^{k_{2m-1}}},\label{MOT}\\
&T_n({\bfk_{2m}}):= \sum_{\bfn\in D_{n,2m}} \frac{2^{2m}}{\prod_{j=1}^{m} (2n_{2j-1}-1)^{k_{2j-1}}(2n_{2j})^{k_{2j}}},\label{MET}\\
&S_n({\bfk_{2m-1}}):= \sum_{\bfn\in E_{n,2m-1}} \frac{2^{2m-1}}{(\prod_{j=1}^{m-1} (2n_{2j-1})^{k_{2j-1}}(2n_{2j}-1)^{k_{2j}})(2n_{2m-1})^{k_{2m-1}}},\label{MOS}\\
&S_n({\bfk_{2m}}):= \sum_{\bfn\in E_{n,2m}} \frac{2^{2m}}{\prod_{j=1}^{m} (2n_{2j-1})^{k_{2j-1}}(2n_{2j}-1)^{k_{2j}}},\label{MES}
\end{align}
where $T_n({\bfk_{2m-1}}):=0$ if $n<m$, and $T_n({\bfk_{2m}})=S_n({\bfk_{2m-1}})=S_n({\bfk_{2m}}):=0$ if $n\leq m$. Moreover, for convenience sake, we set $T_n(\emptyset)=S_n(\emptyset):=1$. We call \eqref{MOT} and \eqref{MET} \emph{multiple $T$-harmonic sums} ({\rm MTHSs} for short), and call \eqref{MOS} and \eqref{MES}  \emph{multiple $S$-harmonic sums} ({\rm MSHSs} for short).
\end{defn}

According to the definitions of MTHSs and MSHSs, we have the following relations
\begin{alignat*}{3}
&T_n({\bfk_{2m}})=2\sum_{j=1}^{n-1} \frac{T_j({\bfk_{2m-1}})}{(2j)^{k_{2m}}}, \qquad &
&T_n({\bfk_{2m-1}})=2\sum_{j=1}^{n} \frac{T_j({\bfk_{2m-2}})}{(2j-1)^{k_{2m-1}}},\\
&S_n({\bfk_{2m}})=2\sum_{j=1}^{n} \frac{S_j({\bfk_{2m-1}})}{(2j-1)^{k_{2m}}}, \qquad &
&S_n({\bfk_{2m-1}})=2\sum_{j=1}^{n-1} \frac{S_j({\bfk_{2m-2}})}{(2j)^{k_{2m-1}}}.
\end{alignat*}
It is clear that when taking the limit $n\rightarrow \infty$ in \eqref{MOT} and \eqref{MET}, \eqref{MOS} and \eqref{MES} with $k_r>1$, we get the MTVs and MSVs, respectively:
\begin{equation*}
T(\bfk)=\lim_{n\to\infty }T_n(\bfk),\qquad
S(\bfk)=\lim_{n\to\infty }S_n(\bfk).
\end{equation*}

Now, we use the MTHSs and MSHSs to define the convoluted $T$-values $T({\bfk_{r}}\circledast {\bfl_{s}})$, which can be regarded as a $T$-variant of K-Y MZVs.
\begin{defn} For positive integers $m$ and $p$, the \emph{convoluted $T$-values}
\begin{align}
&T({\bfk_{2m}}\circledast{\bfl_{2p}})=2\su \frac{T_n({\bfk_{2m-1}})T_n({\bfl_{2p-1}})}{(2n)^{k_{2m}+l_{2p}}},\\
&T({\bfk_{2m-1}}\circledast{\bfl_{2p-1}})=2\su \frac{T_n({\bfk_{2m-2}})T_n({\bfl_{2p-2}})}{(2n-1)^{k_{2m-1}+l_{2p-1}}},\\
&T({\bfk_{2m}}\circledast{\bfl_{2p-1}})=2\su \frac{T_n({\bfk_{2m-1}})S_n({\bfl_{2p-2}})}{(2n)^{k_{2m}+l_{2p-1}}},\\
&T({\bfk_{2m-1}}\circledast{\bfl_{2p}})=2\su \frac{T_n({\bfk_{2m-2}})S_n({\bfl_{2p-1}})}{(2n-1)^{k_{2m-1}+l_{2p}}}.
\end{align}
\end{defn}

\begin{re} (i). Here the `$\circledast$' is only a notation which doesn't satisfy the ``circle harmonic shuffle product'' relation. (ii) It is possible to defined convoluted $S$-values $S({\bfk_{r}}\circledast {\bfl_{s}})$ in a similar vein so that the first factor in the sum is a MSHS and the second factor is MSHS or MTHS depending whether $r$ and $s$ have the same parity or not.
\end{re}

Note that the MTVs are special cases of the convoluted $T$-values since
\begin{align*}
&T({\bfk_{r}}\circledast (1))=T((\bfk_r)_{+}),\quad T((1)\circledast {\bfl_{2p-1}})=T((\bfl_{2p-1})_{+}).
\end{align*}
Moreover, from the definition of $T({\bfk_{2m-1}}\circledast{\bfl_{2p}})$, we have
\begin{align*}
&T((1)\circledast \bfl_{2p})=S((\bfl_{2p})_{+}).
\end{align*}

The primary goals of this paper are to study the explicit relations of MMVs and its special type, and
establish some explicit evaluations of the convoluted $T$-values and related values via MTVs and (alternating) single zeta values.

The remainder of this paper is organized as follows. In Section \ref{sec:MMV} we find the series stuffle relations and integral shuffle relations of MMVs, and set up the algebra framework for the regularized double shuffle relations (DBSFs) of MMVs.

In Section \ref{sec:KTV}, we first prove four integral identities involving the nature logarithm function. Then we apply these formulas obtained to establish four explicit relations between $T({\bfk_{r}}\circledast {\bfl_{s}})$ and Kaneko-Tsumura $\psi$-function (see \eqref{a14}).

In Section \ref{sec:poset}, we review the definitions and basic properties of 2-labeled posets and the associated integrals introduced by Yamamoto \cite{Y2014}. Moreover, we give the integral expressions of multiple $T$-values and Kaneko-Tsumura $\psi$-values. Further, we apply the integral expressions of $\psi(k_1,\ldots,k_r;p+1)$ to prove that the $\psi(k_1,\ldots,k_r;p+1)$ can be expressed in terms of MTVs, and give explicit formula. Further, we prove that the $T(\bfk\circledast\{1\}_p)$ can be expressed in terms of products of {\rm MTVs} and alternating single zeta values, where for any string $\bfs$ we denote by $\{\bfs\}_p$ the string obtained by repeating $\bfs$ $p$-times.

In Section \ref{sec:psi-V}, we give explicit expressions of convoluted $T$-values by Kaneko-Tsumura $\psi$-values and alternating zeta values, and find some duality relations for Kaneko-Tsumura $\psi$-Values.

In Section \ref{sec:MSV}, we use the method of contour integration and residue theorem to evaluate the multiple $S$-values of depth two and three. Furthermore, we also prove an reducible theorem for MMVs of depth three.

In Section \ref{sec:dimMMV}, we find out the dimensions of MMVs, MtVs, MTVs and MSVs, and give some conjectures on relations between MMVs, MtVs, MTVs and MSVs.

CX expresses his deep gratitude to Prof. Masanobu Kaneko and Prof. Weiping Wang for valuable discussions and comments. JZ wants to thank Prof. Kaneko for inviting him to visit the Multiple Zeta Research Center at Kyushu University where this joint work started.

\section{Multiple Mixed Values}\label{sec:MMV}
Recall that MZVs are equipped with regularized double shuffle relations (DBSFs) which are generally believed to generate all $\Q$-linear relations among MZVs. Furthermore, the regularized DBSFs can be defined for all colored MZVs although it is known that they are not enough to generate all $\Q$-linear  relations for higher levels. For example, Euler sums (i.e., level two values) satisfy the so-called distribution relations which are not always contained in the regularized DBSFs (see \cite{BlumleinBrVe2010,Zhao2010a}).

In this section, we will consider the similar structures as above among multiple mixed values defined as follows.
For any admissible composition of positive integers $\bfk=(k_1,k_2,\ldots,k_r)$ and $\eps_1, \dots, \eps_r\in\{\pm 1\}$, the \emph{multiple mixed values} (or \emph{multiple $M$-values}, MMVs for short) are defined by
\begin{equation*}
M(\bfk;\bfeps):=\sum_{0<m_1<\cdots<m_r} \frac{(1+\eps_1(-1)^{m_1}) \cdots (1+\eps_r(-1)^{m_r})}{m_1^{k_1} \cdots m_r^{k_r}}=\sum_{\substack{0<n_1<\cdots<n_r\\ 2| n_j \text{ if } \eps_j=1 \\ 2\nmid n_j \text{ if } \eps_j=-1
}} \frac{2^r}{n_1^{k_1} \cdots n_r^{k_r}}.
\end{equation*}
As usual, we call $k_1+\cdots+k_r$ and $r$ the \emph{weight} and \emph{depth}, respectively.
For convenience, we say the \emph{signature} of $k_j$ is even or odd depending on whether $\eps_j$ is 1 or $-1$.
For brevity, we put a check on top of the component $k_j$ if $\eps_j=-1$. For example,
\begin{align*}
M(1,2,\check{3})=&\, \sum_{0<m_1<m_2<m_3} \frac{(1+(-1)^{m_1}) (1+(-1)^{m_2}) (1-(-1)^{m_3})}{m_1 m_2^{2}m_3^{3}}\\
=&\, \sum_{0<\ell<m<n} \frac{8}{(2\ell)  (2m)^{2} (2n-1)^{3}}.
\end{align*}
It is again apparent that MMVs satisfy the series stuffle relations.  For example,
\begin{align*}
M(2,1,\check{3})M(\check{2})=&\,M(\check{2},2,1,\check{3})+M(2,\check{2},1,\check{3})
+M(2,1,\check{2},\check{3})+M(2,1,\check{3},\check{2})+2M(2,1,\check{5}),\\
M(1,\check{3})M(2,\check{3})=&\,2M(1,2,\check{3},\check{3})+2M(2,1,\check{3},\check{3})
+M(2,\check{3},1,\check{3})+M(1,\check{3},2,\check{3})\\
&\, +
4M(3,\check{3},\check{3})+2M(1,2,\check{6})+2M(2,1,\check{6})+4M(3,\check{6}).
\end{align*}
We observe that stuffing can happen only when the two merging components have the same signature, and
an extra factor of 2 must appear for each incidence of stuffing.

To study the shuffle structure among MMVs, we first recall the set-up for Euler sums. Define the 1-forms
\begin{equation*}
\tx_0(t)=\frac{dt}{t}, \quad \txp(t)=\frac{dt}{1-t}, \quad \txn(t)=\frac{-dt}{1+t}.
\end{equation*}
Then
\begin{equation*}
\begin{split}
\zeta(k_1,\dots,k_r;\eps_1,\eps_2,\dots,\eps_r)=&\, \int_0^1 \tx_0^{k_r-1}\tx_{\eps_r}\tx_0^{k_{r-1}-1}\tx_{\eps_{r-1}\eps_r}\cdots\tx_0^{k_1-1}\tx_{\eps_1\eps_2\dots \eps_r},\\
\int_0^1 \tx_0^{k_r-1}\tx_{\eps_r}\cdots\tx_0^{k_2-1}\tx_{\eps_2}\tx_0^{k_1-1}\tx_{\eps_1}=&\,
\zeta(k_1,\dots,k_r;\eps_1\eps_2, \dots, \eps_{r-1}\eps_r,\eps_r).
\end{split}
\end{equation*}
Here, we use the fact that $\eps_j=\pm 1$ so that $\eps_j/\eps_{j-1}=\eps_j\eps_{j-1}$.
Similarly, for MMVs we set
\begin{equation*}
 \omz(t)=\frac{dt}{t}, \quad \omn(t)=\frac{2dt}{1-t^2}=\txp(t)-\txn(t), \quad \omp(t)=\frac{2tdt}{1-t^2}=\txp(t)+\txn(t).
\end{equation*}
It is not hard to see that for every admissible composition $\bfk\in\N^r$ and $\bfeps\in\{\pm 1\}^r$,
\begin{equation}\label{equ:MMVint}
\begin{split}
M(k_1,\dots,k_r;\eps_1,\eps_2,\dots,\eps_r)=&\, \int_0^1 \omz^{k_r-1}\om_{\eps_r}\omz^{k_{r-1}-1}\om_{\eps_{r-1}\eps_r}\cdots\omz^{k_1-1}\om_{\eps_1\eps_2\dots \eps_r},\\
\int_0^1 \omz^{k_r-1}\om_{\eps_r}\cdots\omz^{k_2-1}\om_{\eps_2}\omz^{k_1-1}\om_{\eps_1}=&\,
M(k_1,\dots,k_r;\eps_1\eps_2, \dots, \eps_{r-1}\eps_r,\eps_r).
\end{split}
\end{equation}

\subsection{Algebraic set-up for MMVs}\label{subsec:A-MMVs}
For every positive integer $k\ge 1$ and $\eps=\pm1$, define the word of length $k$
$$\z_{k,\eps}:=\omz^{k-1} \om_\eps.$$
One can now define an algebra of words as follows
\begin{defn} \label{defn:fAlevel2}
The set of alphabet $\setX$ consists of the letters $\omz$, $\omp$,
and $\omn$. The \emph{weight} of a word $\bfw$ (\emph{i.e.}, denoted by $|\bfw|$,
a monomial in the letters in $\setX$)
is the number of letters contained in $\bfw$, and its \emph{depth},
denoted by $\dep(\bfw)$, is the number of $\om_\eps$'s contained in $\bfw$.
Define the \emph{MMV} algebra, denoted by $\fA$, to be the (weight) graded
noncommutative polynomial $\Q$-algebra generated by words (including the empty word $\myone$) over the alphabet $\setX$.
Let $\fA^0$ be the subalgebra of $\fA$ generated by words not
beginning with $\om_1$ and not ending with $\omz$. The words in $\fA^0$
are called \emph{admissible words.}
\end{defn}

By Eq.~\eqref{equ:MMVint} every MMV can be expressed as an iterated
integral over the closed interval $[0,1]$ of an admissible word
$\bfw$ in $\fA^0$. This is denoted by
\begin{equation}\label{equ:evaL}
\evaM(\bfw):=\int_0^1 \bfw.
\end{equation}
We also extend $\evaM$ to $\fA$ by $\Q$-linearity.
We remark that the length ${\rm lg}(\bfw)$ of $\bfw$ is equal to the
weight of $\evaM(\bfw)$. Therefore in general for all admissible $\bfk\in\N^r$
and $\bfeps\in\{\pm1\}^r$, from \eqref{equ:MMVint} one has
\begin{align}
 \label{equ:MMV2word}
 M(\bfk;\eps_1,\dots,\eps_r)=&\, \evaM \big(
 \z_{k_r,\eps_r}\z_{k_r,\eps_{r-1}\eps_r}\cdots \z_{k_2,\eps_1\eps_2\dots \eps_r} \big),\\
 \evaM\big(\z_{k_r,\eps_r}\cdots \z_{k_2,\eps_2}\z_{k_1,\eps_1}\big)=&\,
M\big(\bfk;\eps_1\eps_2, \dots, \eps_{r-1} \eps_r, \eps_r \big).
 \label{equ:word2MMV}
\end{align}
Therefore, we can apply Chen's theory of the iterated integrals to compute
the product of two MMVs.

Let $\fA_{\sha}$ be the algebra of $\fA$ where the
multiplication is defined by the shuffle product $\sha$.
Denote the subalgebra $\fA^0$ by $\fA_{\sha}^0$ when one
considers the shuffle product. Then one can easily prove
\begin{pro} \label{prop:MMVshahomo}
The map $\evaML:\fA_{\sha}^0\lra \R$ is an algebra homomorphism.
\end{pro}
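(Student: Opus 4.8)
The plan is to apply Chen's theory of iterated integrals. The key point is that for a real interval $[0,1]$ and differential $1$-forms $\alpha_1,\dots,\alpha_p,\beta_1,\dots,\beta_q$ on it, one has the classical shuffle identity
\begin{equation*}
\Big(\int_0^1 \alpha_1\cdots\alpha_p\Big)\Big(\int_0^1 \beta_1\cdots\beta_q\Big)=\int_0^1 (\alpha_1\cdots\alpha_p)\sha(\beta_1\cdots\beta_q),
\end{equation*}
which follows from Fubini's theorem by decomposing the product of simplices $\{0<t_1<\cdots<t_p<1\}\times\{0<s_1<\cdots<s_q<1\}$ into the union of simplices indexed by the shuffles of the two orderings (the lower-dimensional overlaps having measure zero). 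I would first recall that every word $\bfw\in\fA^0$ is, by Definition~\ref{defn:fAlevel2}, a monomial in $\omz,\omp,\omn$ that neither begins with $\omp$ nor ends with $\omz$; since $\omz(t)=dt/t$ has a singularity only at $t=0$ and $\omp(t)=2t\,dt/(1-t^2)$, $\omn(t)=2\,dt/(1-t^2)$ have singularities only at $t=1$, the admissibility condition guarantees that $\evaM(\bfw)=\int_0^1\bfw$ converges as an absolutely convergent iterated integral over the open simplex in $(0,1)$.

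Next I would verify that $\fA_{\sha}^0$ is closed under the shuffle product: the shuffle of two words never creates a new first or last letter, so a shuffle of two admissible words is a $\Q$-linear combination of admissible words, hence lies in $\fA^0$. Then, for admissible words $\bfu$ and $\bfv$, writing them out as products of the generating $1$-forms, the Fubini/simplex-decomposition argument above yields $\evaM(\bfu)\evaM(\bfv)=\evaM(\bfu\sha\bfv)$; extending by $\Q$-bilinearity of $\sha$ and $\Q$-linearity of $\evaM$ gives $\evaM(xy)=\evaM(x)\evaM(y)$ for all $x,y\in\fA_{\sha}^0$. Since $\evaM(\myone)=1$, the map is a $\Q$-algebra homomorphism into $\R$.

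The main technical obstacle is the convergence bookkeeping at the boundary of the simplex: one must check that the iterated integrals of admissible words converge and, more importantly, that the shuffle/Fubini manipulation is legitimate despite the integrand blowing up near $t=0$ or $t=1$. The cleanest route is to note that $\omz$ only appears in interior position of any admissible word followed eventually by some $\om_{\pm1}$, and $\om_{\pm1}$ only in positions bounded away from the $t=1$ end except possibly the innermost, so standard estimates (as in the MZV case, e.g.\ in \cite{Z2016}) show absolute convergence; then dominated convergence justifies passing the shuffle identity for the truncated integrals $\int_\delta^{1-\delta}$ to the limit $\delta\to 0^+$. Everything else is the purely combinatorial shuffle identity, identical to the classical MZV setting, so no new idea beyond replacing the alphabet $\{\tx_0,\txp,\txn\}$ by $\{\omz,\omp,\omn\}$ is required.
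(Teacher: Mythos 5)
Your proposal is correct and is essentially the paper's own argument: the paper's proof is a one-line appeal to Chen's theory of the shuffle product for iterated integrals, and you have simply supplied the standard details (closure of $\fA^0$ under $\sha$, the simplex decomposition/Fubini identity, and the convergence bookkeeping via truncation). One caveat, inherited from the paper's Definition~\ref{defn:fAlevel2} rather than introduced by you: with a three-letter alphabet, ``not beginning with $\om_1$'' does not exclude words beginning with $\omn$, which (being singular at $t=1$, the endpoint attached to the leftmost form) give divergent integrals such as $\int_0^1\omn\omp$ --- so your claim that admissibility as literally stated guarantees absolute convergence needs the condition strengthened to ``beginning with $\omz$,'' which is what actually occurs for words coming from admissible compositions ($k_r>1$).
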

\begin{proof}
Similar to the corresponding result for MZVs, this follows easily from Chen's theory of the shuffle product relations of iterated integrals. We leave the details of the proof to the interested reader.
\end{proof}

On the other hand, the MMVs are known to satisfy the
series stuffle relations.
To study such relations in general one has the following
definition.

\begin{defn}
Denote by $\fA^1$ the subalgebra of $\fA$ which is generated by words $\z_{k,\eps}$ with $k\in \N$
and $\eps=\pm1$. Equivalently, $\fA^1$ is the
subalgebra of $\fA$ generated by words not ending with $\omz$. For
any word $\bfw=\z_{k_1,\eps_1}\z_{k_2,\eps_2}\cdots
 \z_{k_r,\eps_r}\in \fA^1$ and $\eps=\pm1$, one
defines the exponent shifting operator $\tau_\eps$ by
\begin{equation*}
\tau_\eps(\bfw)=\z_{k_1,\eps\eps_1}\z_{k_2,\eps\eps_2}\cdots \z_{k_r,\eps\eps_r}.
\end{equation*}
For convenience, on the empty word we adopt the convention that
$\tau_\eps(\myone)=\myone.$ We then define another multiplication $*$
on $\fA^1$ by requiring that $*$ distribute over addition, that
$\myone*\bfw=\bfw*\myone=\bfw$ for any word $\bfw$, and that, for any words
$\bfu,\bfv$, $s,t\in\N$ and $\eps,\eta=\pm 1$
\begin{multline} \label{equ:defnstufflelevelN}
 \bfu\z_{s,\eps}*\bfv \z_{t,\eta}= \Big(\tau_\eps\big(\tau_{\eps}(\bfu)*\bfv\z_{t,\eta}\big)\Big)\z_{s,\eps}
 + \Big(\tau_\eta\big( \bfu*\z_{s,\eps}\tau_{\eta}(\bfv)\big)\Big) \z_{t,\eta}\\
 +2\delta(\eps,\eta)\Big(\tau_{\eps}\big(\tau_{\eps}(\bfu)*\tau_{\eps}(\bfv)\big)\Big)\z_{s+t,\eps},
\end{multline}
where $\delta(\eps,\eta)=0$ or 1 is the Kronecker symbol.
This multiplication $*$ is called the \emph{stuffle product} for MMVs.
\end{defn}

If we denote by $\fA_{*}^1$ the algebra $(\fA^1,*)$ then it is not
hard to prove the next proposition.
\begin{pro}\label{prop:MMVsthomo}
The polynomial algebra $\fA_{*}^1$ is a commutative weight graded $\Q$-algebra.
\end{pro}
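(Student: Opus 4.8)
The plan is to verify directly from the defining recursion \eqref{equ:defnstufflelevelN} that $*$ is associative and commutative, and that it respects the weight grading; commutativity of the underlying polynomial ring structure is immediate since $\fA^1$ is already defined as a polynomial $\Q$-algebra on the generators $\z_{k,\eps}$, so the only substantive content is that $*$ is a well-defined, associative, commutative product compatible with the grading. First I would observe that the weight grading is clear: in each of the three terms on the right of \eqref{equ:defnstufflelevelN} the total weight equals $|\bfu|+|\bfv|+s+t$, since $\tau_\eps$ preserves weight and $\z_{s+t,\eps}$ has weight $s+t$; hence $*$ maps the weight-$m$ part times the weight-$n$ part into the weight-$(m+n)$ part. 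Commutativity of $*$ as a binary operation follows by a straightforward induction on the total weight (equivalently total length) of the two words: the three terms in $\bfu\z_{s,\eps}*\bfv\z_{t,\eta}$ become, after swapping, exactly the three terms of $\bfv\z_{t,\eta}*\bfu\z_{s,\eps}$ — the first and second terms trade places, the Kronecker term is symmetric, and one uses $\delta(\eps,\eta)=\delta(\eta,\eps)$ together with the inductive commutativity of the shorter products inside the $\tau$'s.

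For associativity I would again induct on total weight. Expanding $(\bfu\z_{s,\eps}*\bfv\z_{t,\eta})*\bfx\z_{u,\theta}$ by applying \eqref{equ:defnstufflelevelN} twice produces a sum of terms, each ending in one of $\z_{s,\eps}$, $\z_{t,\eta}$, $\z_{u,\theta}$, $\z_{s+t,\eps}$ (when $\eps=\eta$), $\z_{s+u,\eps}$ (when $\eps=\theta$), $\z_{t+u,\eta}$ (when $\eta=\theta$), or $\z_{s+t+u,\eps}$ (when $\eps=\eta=\theta$); doing the same for $\bfu\z_{s,\eps}*(\bfv\z_{t,\eta}*\bfx\z_{u,\theta})$ yields a sum with terms ending in the same list of letters. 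One then matches the two sides term by term according to the final letter, reducing each match to an instance of associativity (or commutativity) of $*$ on strictly shorter words, which holds by the induction hypothesis. The key structural point making this bookkeeping work is the standard identity $\tau_\eps\circ\tau_\eta=\tau_{\eps\eta}$ and the fact that $\tau_\eps$ is an algebra homomorphism for $*$ in the appropriate sense — more precisely, one checks $\tau_\eps(\bfw_1*\bfw_2)=\tau_\eps(\bfw_1)*\tau_\eps(\bfw_2)$ directly from \eqref{equ:defnstufflelevelN} (since $\tau_\eps$ commutes with all the $\tau$'s and with the formation of $\z_{s+t,\cdot}$, and preserves the Kronecker condition $\eps=\eta$), which should be proved first as a lemma since it is used repeatedly in collapsing the nested expressions.

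I expect the main obstacle to be the associativity verification: the combinatorics of tracking which of the up to seven possible ``final letters'' each term contributes to, and checking that the coefficients (including the factors of $2$ from the stuffing terms, and nested $\delta$'s like $2\delta(\eps,\eta)\cdot 2\delta(\eps,\theta)$ in the triple-collapse term) match on both sides, is the delicate part. A clean way to organize this — which I would adopt rather than brute force — is to identify $\fA^1_*$ with a quasi-shuffle (Hoffman-type) algebra: the generators $\z_{k,\eps}$ live in a commutative semigroup under the ``diamond'' operation $\z_{s,\eps}\diamond\z_{t,\eta}=2\delta(\eps,\eta)\z_{s+t,\eps}$ (this is commutative and associative on the set of letters, the latter because $\z_{s,\eps}\diamond\z_{t,\eps}\diamond\z_{u,\eps}=4\z_{s+t+u,\eps}$ associates trivially and the scalar $0$ cases are consistent), twisted by the collection of commuting operators $\{\tau_\eps\}$. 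Once the recursion \eqref{equ:defnstufflelevelN} is recognized as the defining recursion of such a twisted quasi-shuffle product, associativity and commutativity follow from Hoffman's general theorem, or equivalently from the same weight induction carried out in that abstract setting, and the tracking of the factor-of-$2$ coefficients is subsumed into the associativity of $\diamond$. This reduces the proposition to the verification that $\diamond$ is a commutative associative operation on letters and that each $\tau_\eps$ is compatible with it and with $*$, both of which are short direct checks; I would then state this reduction and leave the routine induction, as the paper does for the shuffle case, to the reader.
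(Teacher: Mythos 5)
Your proposal rests on a lemma that is false for the product \eqref{equ:defnstufflelevelN}: the operators $\tau_\gamma$ are \emph{not} $*$-homomorphisms. Already in depth one the recursion gives
\begin{equation*}
\z_{s,\eps}*\z_{t,\eta}=\z_{t,\eps\eta}\z_{s,\eps}+\z_{s,\eps\eta}\z_{t,\eta}+2\delta(\eps,\eta)\z_{s+t,\eps},
\end{equation*}
so for $\eps=1$, $\eta=-1$ one finds $\tau_{-1}\big(\z_{s,1}*\z_{t,-1}\big)=\z_{t,1}\z_{s,-1}+\z_{s,1}\z_{t,1}$, whereas $\tau_{-1}(\z_{s,1})*\tau_{-1}(\z_{t,-1})=\z_{s,-1}*\z_{t,1}=\z_{t,-1}\z_{s,-1}+\z_{s,-1}\z_{t,1}$: the non-final letters carry different signs. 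Since you explicitly make $\tau_\eps(\bfw_1*\bfw_2)=\tau_\eps(\bfw_1)*\tau_\eps(\bfw_2)$ the ``key structural point'' used to collapse the nested expressions in the associativity induction, that induction does not go through as written. The same problem undermines your fallback: Hoffman's theorem covers the untwisted recursion $\bfu a*\bfv b=(\bfu*\bfv b)a+(\bfu a*\bfv)b+(\bfu*\bfv)(a\diamond b)$, and \eqref{equ:defnstufflelevelN} is not of that form because of the interior $\tau$'s, so a ``quasi-shuffle twisted by the $\tau_\eps$'' is not something the general theorem applies to without an extra argument. (Your checks that $\diamond$ with $\z_{s,\eps}\diamond\z_{t,\eta}=2\delta(\eps,\eta)\z_{s+t,\eps}$ is commutative and associative, that the grading is respected, and that commutativity of $*$ follows by induction without any homomorphism property, are all fine.)

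The repair --- and what the paper's one-line proof, a citation of \cite[Theorem 3.2]{H1997}, implicitly intends --- is to remove the twist by a change of encoding rather than by commuting $\tau_\eps$ past $*$. The signs recorded in a word $\z_{k_r,\eps_r}\cdots\z_{k_1,\eps_1}$ are the ``integral'' signs; replacing them by the ``series'' signs via the map $\bfq$ of the Remark following Prop.~\ref{prop:extendMsha} defines a linear automorphism of $\fA^1$ that carries the product \eqref{equ:defnstufflelevelN} to the plain Hoffman quasi-shuffle product built from exactly your $\diamond$ (this is why the $\tau$'s appear in the first place: they are the artifact of writing the series stuffle in the integral encoding). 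Commutativity, associativity and the grading then follow from Hoffman's theorem, or from your induction carried out in the untwisted setting. So your reduction target is the right one, but the bridge to it must be the encoding change $\bfq$, not the homomorphism property of the individual $\tau_\eps$, which fails.
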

\begin{proof}
 A similar proof one can refer to \cite[Theorem 3.2]{H1997}.%See Exercise~\ref{exer:sthomo}.
\end{proof}

Now we can define the subalgebra $\fA_{*}^0$ similar to $\fA_{\sha}^0$
by replacing the shuffle product by the stuffle product. Then by the
induction on the lengths and using the series definition one can
quickly check that for any $\bfw_1,\bfw_2\in \fA_{*}^0$
$$\evaM(\bfw_1)\evaM(\bfw_2)=\evaM(\bfw_1\ast \bfw_2).$$
This implies  the following result.
\begin{pro} \label{prop:MMVstIShomo}
The map $\evaML: \fA_{*}^0 \lra \R$ is an algebra homomorphism.
\end{pro}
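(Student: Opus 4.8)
The plan is to establish Proposition~\ref{prop:MMVstIShomo} by reducing it to the already-established algebraic fact (Proposition~\ref{prop:MMVsthomo}) that $\fA^1_*$ is a commutative algebra, together with the compatibility between the combinatorial stuffle product~\eqref{equ:defnstufflelevelN} and the actual series multiplication of MMVs. Concretely, it suffices to show that for any two admissible words $\bfw_1,\bfw_2\in\fA^0_*$ we have $\evaM(\bfw_1)\evaM(\bfw_2)=\evaM(\bfw_1*\bfw_2)$; since $\evaML$ is already $\Q$-linear and $\fA^0_*$ is closed under $*$ (this is part of the set-up preceding the proposition), this identity immediately yields that $\evaML$ is an algebra homomorphism.

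First I would translate each admissible word back into a convergent MMV series using~\eqref{equ:word2MMV}: writing $\bfw_1=\z_{k_1,\eps_1}\cdots\z_{k_r,\eps_r}$ and $\bfw_2=\z_{l_1,\eta_1}\cdots\z_{l_s,\eta_s}$, admissibility guarantees the corresponding nested series converge absolutely, so their product can be rearranged freely. Then I would expand the product $\evaM(\bfw_1)\evaM(\bfw_2)$ as a double sum over $0<m_1<\cdots<m_r$ and $0<n_1<\cdots<n_s$ and partition the index set according to how the two increasing chains interleave: every pair of chains determines an order-preserving way to merge the $m_i$'s and $n_j$'s into a single weakly increasing sequence, where the only coincidences allowed are between an $m_i$ and an $n_j$ (two distinct $m$'s, or two distinct $n$'s, can never collide). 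When such a collision $m_i=n_j$ occurs, the numerator picks up a factor $(1+\eps_i(-1)^{m_i})(1+\eta_j(-1)^{n_j})$, which is nonzero only if $\eps_i=\eta_j$, in which case it equals $2(1+\eps_i(-1)^{m_i})$ — this is exactly the source of the factor $2\delta(\eps,\eta)$ in the third term of~\eqref{equ:defnstufflelevelN}, and also explains why the parity constraint (same signature) is forced. Grouping the terms of the partitioned double sum according to the merge pattern and re-summing each group reproduces term-by-term the definition of $\bfw_1*\bfw_2$, where the exponent-shifting operators $\tau_\eps$ account correctly for the bookkeeping of signs as one recursively peels off the top letter.

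The cleanest way to organize this is by induction on the total length ${\rm lg}(\bfw_1)+{\rm lg}(\bfw_2)$, mirroring the recursive shape of~\eqref{equ:defnstufflelevelN}. In the inductive step one considers the largest index among $m_r$ and $n_s$: either $m_r>n_s$ (top letter comes from $\bfw_1$, giving the first term after applying $\tau_{\eps_r}$ to reconcile the conditional-convergence-free inner signs), or $n_s>m_r$ (symmetric, second term), or $m_r=n_s$ (forcing $\eps_r=\eta_s$ and producing the third term with the extra $2$). In each of the three cases the remaining inner double sum is again a product of two MMVs of strictly smaller total length, to which the induction hypothesis applies, and one checks the $\tau_\eps$-conjugations match those in~\eqref{equ:defnstufflelevelN}. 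The main obstacle, and the only place requiring genuine care, is making the bookkeeping of the exponent-shifting operators $\tau_\eps$ transparent: because the ``level-2'' letters $\om_{\pm1}$ encode signs multiplicatively and the word-to-series dictionary~\eqref{equ:word2MMV} replaces $(\eps_1,\dots,\eps_r)$ by the running products $(\eps_1\eps_2,\dots,\eps_{r-1}\eps_r,\eps_r)$, one must verify that peeling off the last letter and re-applying the dictionary to the shorter word introduces precisely the shift $\tau_{\eps_r}$ (resp.\ $\tau_{\eta_s}$) demanded by the definition — this is a routine but slightly delicate sign-chasing argument, entirely parallel to Hoffman's original computation for MZVs in \cite{H1997}. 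Since this parallels the standard MZV case almost verbatim except for the $\tau_\eps$ twist and the factor of $2$, I would indicate the inductive skeleton and the three-case split explicitly and leave the sign verification to the reader, as the paper does for the analogous shuffle statement in Proposition~\ref{prop:MMVshahomo}.
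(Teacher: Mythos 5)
Your proposal is correct and follows essentially the same route as the paper, which itself only states that the identity $\evaM(\bfw_1)\evaM(\bfw_2)=\evaM(\bfw_1*\bfw_2)$ follows ``by induction on the lengths and using the series definition'' and defers to Hoffman's Theorem 4.2 in \cite{H1997}. Your three-case split on the largest index, the observation that $(1+\eps(-1)^m)(1+\eta(-1)^m)$ vanishes unless $\eps=\eta$ and otherwise equals $2(1+\eps(-1)^m)$ (the source of the factor $2\delta(\eps,\eta)$), and the $\tau_\eps$ sign-chasing forced by the dictionary \eqref{equ:word2MMV} are exactly the content the paper leaves implicit.
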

\begin{proof}
 A similar proof one can refer to \cite[Theorem 4.2]{H1997}.%See Exercise~\ref{exer:sthomo}.
\end{proof}

\begin{defn}
Let $w$ be an integer such that $w\ge 2$. For nontrivial words
$\bfw_1,\bfw_2\in \fA^0$ with $|\bfw_1|+|\bfw_2|=w$, we say that
the equation
\begin{equation}\label{equ:FDSw}
 \evaM(\bfw_1\sha \bfw_2-\bfw_1*\bfw_2)=0
\end{equation}
provides a \emph{finite double shuffle relation} (finite DBSF) of MMVs
of weight $w$.
\end{defn}

\subsection{Regularization for divergent MMVs}
It is known that even at level one (i.e., the MZV case) these
relations are not enough to provide all the relations among the MZVs.
For example, the weight of the product of two MZVs is at least 4. So
the finite DBSFs cannot imply the well-known identity $\zeta(2,1)=\zeta(3)$.
However, it is believed that one can remedy this by
considering \emph{regularized double shuffle relation} (regularized DBSF)
produced by the following mechanism.

First, combining Prop.~\ref{prop:MMVshahomo} and Prop.~\ref{prop:MMVstIShomo}
we can easily prove the following algebraic result.

\begin{pro} \label{prop:extendM*}
We have an algebra homomorphism:
\begin{equation*}
\evaML_*: (\fA_{*}^1,*)\lra \R[T]
\end{equation*}
which is uniquely determined by the properties that they both
extend the evaluation map $\evaML:\fA^0\lra \R$ by sending
$\omp=\z_{1,1}$ to $T$ and $\omn=\z_{1,-1}$ to $T+2\log 2$.
\end{pro}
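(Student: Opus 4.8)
The plan is to mimic the standard construction of the regularized stuffle map for multiple zeta values (Ihara--Kaneko--Zagier), adapting it to the level-two setting. First I would set up the target: recall that $\fA^1_*$ is a polynomial algebra over $\fA^0_*$ in the two "divergent" generators $\omp=\z_{1,1}$ and $\omn=\z_{1,-1}$. Concretely, I would invoke the analogue of the Hoffman--Zagier structure theorem which asserts that $\fA^1_*\cong \fA^0_*[\omp,\omn]$ as a commutative $\Q$-algebra, i.e.\ every word in $\fA^1$ can be written uniquely as a polynomial in $\omp$ and $\omn$ with coefficients that are $\Q$-linear combinations of admissible words. This is the algebraic fact that makes the statement possible: once we know $\fA^1_*$ is free as a polynomial ring over $\fA^0_*$ on these two generators, an algebra homomorphism out of it is determined freely by its values on $\fA^0$ and on $\omp,\omn$, and any prescribed values (here $T$ and $T+2\log 2$) yield a well-defined homomorphism.

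Next I would justify that this polynomial-ring structure actually holds. The key input is Prop.~\ref{prop:MMVsthomo} (that $\fA^1_*$ is a commutative weight-graded $\Q$-algebra) together with a PBW-type argument: one shows by induction on depth and weight that the products $\omp^{*a}*\omn^{*b}*\bfw$ with $\bfw\in\fA^0$ span $\fA^1$, and then a Hilbert-series / dimension count (or the explicit "shuffle-to-admissible" rewriting via the operators $\tau_\eps$) shows they are linearly independent. Since the stuffle formula \eqref{equ:defnstufflelevelN} is the exact level-two analogue of Hoffman's quasi-shuffle, the argument of \cite[Theorem 3.2]{H1997} and its standard corollaries carry over essentially verbatim; I would cite that and indicate the only modification, namely bookkeeping the $\tau_\eps$ twists and the Kronecker factor $2\delta(\eps,\eta)$, which do not affect the freeness.

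With the freeness in hand, I would define $\evaML_*$ on a word $\bfw\in\fA^1$ by writing $\bfw=\sum_{a,b}\, \omp^{*a}*\omn^{*b}* \bfw_{a,b}$ with $\bfw_{a,b}\in\fA^0$ (finitely many terms, unique), and setting
\[
\evaML_*(\bfw):=\sum_{a,b}\, T^a\,(T+2\log 2)^b\,\evaM(\bfw_{a,b})\in\R[T].
\]
That this is an algebra homomorphism is then automatic: it is the composite of the $\Q$-algebra isomorphism $\fA^1_*\xrightarrow{\sim}\fA^0_*[\omp,\omn]$ with the homomorphism $\fA^0_*[\omp,\omn]\to\R[T]$ induced by $\evaM$ on $\fA^0$ (which is a homomorphism by Prop.~\ref{prop:MMVstIShomo}) and by $\omp\mapsto T$, $\omn\mapsto T+2\log 2$. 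Uniqueness is clear since $\fA^1$ is generated as an algebra by $\fA^0$ together with $\omp$ and $\omn$, so a homomorphism is pinned down by its values there.

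The main obstacle, and the only place real work is needed, is verifying the $\log 2$ normalization — i.e.\ that sending $\omn\mapsto T+2\log 2$ (rather than to $T$) is the choice consistent with an eventual comparison to the shuffle regularization and with the analytic regularization $\evaML_*(\z_{1,-1})=\lim_{N\to\infty}\big(M_N(\check 1)-\text{(divergent part)}\big)$. Here $M_N(\check 1)=2\sum_{n\le N}\frac1{2n-1}=2(\log 2+\tfrac12\log N)+o(1)$, whereas $M_N(1^{\text{even}})$, the even analogue, behaves like $\log N+o(1)$; matching the divergent $\tfrac12\log N$ across the two signatures forces exactly the $+2\log 2$ shift on the odd generator. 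I would record this computation as the justification that the abstract homomorphism just constructed is the "right" one, but strictly for the statement as phrased — existence and uniqueness of an algebra homomorphism with the prescribed generator values — the content is purely the freeness of $\fA^1_*$ over $\fA^0_*$, and the rest is formal.
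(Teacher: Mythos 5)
There is a genuine gap, and it sits exactly where you say the real content lies: the structure theorem $\fA^1_*\cong\fA^0_*[\omp,\omn]$ is false, so neither the well-definedness nor the uniqueness part of your argument goes through. The MZV proof of $\mathfrak{h}^1_*=\mathfrak{h}^0_*[y]$ works because there is a \emph{single} divergent letter, so the non-admissible prefixes of length $n$ match the monomials $y^{*n}$ one-to-one; here there are two divergent letters, so a word of $\fA^1$ may begin with any of the $2^n$ prefixes in $\{\omp,\omn\}^n$, while the commutative monomials $\omp^{*a}*\omn^{*b}$ of degree $n$ number only $n+1$. A weight-$2$ count already kills the claim: $\fA^1_2$ is $6$-dimensional (spanned by $\omz\omp,\omz\omn,\omp\omp,\omp\omn,\omn\omp,\omn\omn$), whereas $\fA^0_2$ is $2$-dimensional and there are only $3$ quadratic $*$-monomials in $\omp,\omn$, so the subalgebra generated by $\fA^0\cup\{\omp,\omn\}$ has weight-$2$ part of dimension at most $5$. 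Explicitly, \eqref{equ:defnstufflelevelN} gives $\omp*\omp=2\omp\omp+2\omz\omp$, $\omn*\omn=2\omp\omn+2\omz\omn$ and $\omp*\omn=\omn\omp+\omn\omn$, so only the sum $\omn\omp+\omn\omn$ is reachable, not the two words separately. Hence your decomposition $\bfw=\sum_{a,b}\omp^{*a}*\omn^{*b}*\bfw_{a,b}$ does not exist for every word, the proposed formula does not define $\evaML_*$ on all of $\fA^1$, and the prescribed generator values do not pin down, e.g., $\evaML_*(\omn\omp)$ and $\evaML_*(\omn\omn)$ individually but only their sum $T(T+2\log 2)$.

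What is actually needed (and what the paper leaves implicit, offering no proof beyond a pointer to the mechanism of Ihara--Kaneko--Zagier) is the analytic construction of $\evaML_*$ on \emph{every} word of $\fA^1$: one shows that the partial sums $M_N(\bfw)$ admit an asymptotic expansion $P_{\bfw}(\log N+\gamma)+o(1)$ with $P_{\bfw}$ a polynomial of degree equal to the number of leading divergent letters, defines $\evaML_*(\bfw)$ as $P_{\bfw}(T)$ in the chosen normalization, and derives the homomorphism property from the finite-sum identity $M_N(\bfw_1)M_N(\bfw_2)=M_N(\bfw_1*\bfw_2)$; the values $T$ and $T+2\log 2$ on $\omp,\omn$ then fall out of the elementary asymptotics of $\sum_{0<2n\le N}2/(2n)$ versus $\sum_{0<2n-1\le N}2/(2n-1)$, which is essentially the sanity check you perform at the end. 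That check is correct but cannot substitute for defining the map on the words your subalgebra misses. I would also flag that, for the same reason, the ``uniquely determined'' clause of the proposition as literally stated deserves scrutiny: uniqueness on all of $\fA^1$ cannot follow from the generator values alone, so either the statement tacitly refers to the analytically regularized map or it needs to be weakened to the subalgebra generated by $\fA^0$, $\omp$ and $\omn$.
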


\begin{pro} \label{prop:extendMsha}
We have an algebra homomorphism:
\begin{equation*}
\evaML_\sha: (\fA_{\sha}^1,\sha)\lra \R[T]
\end{equation*}
which is uniquely determined by the properties that it
extends the evaluation map $\evaML:\fA^0\lra \R$ by sending
$\omp$ to $T-\log 2$ and $\omn$ to $T+\log 2$.
\end{pro}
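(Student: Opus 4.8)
The plan is to derive this from the familiar one‑variable shuffle regularization of level‑two Euler sums, via the change of alphabet recorded just before Definition~\ref{defn:fAlevel2}, namely $\omz=\tx_0$, $\omp=\txp+\txn$, $\omn=\txp-\txn$ (equivalently $\txp=\tfrac12(\omp+\omn)$, $\txn=\tfrac12(\omp-\omn)$). The point of this detour is that although both $\omp$ and $\omn$ blow up at $t=1$, the plane $\Q\omp+\Q\omn$ has only a one‑dimensional ``singular at $t=1$'' part, spanned by $\txp=\tfrac{dt}{1-t}$, its complement $\Q\txn$ (with $\txn=\tfrac{-dt}{1+t}$ regular at $1$) being ``convergent''. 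So passing to the alphabet $\{\tx_0,\txp,\txn\}$ puts us in the standard ``single bad letter'' situation.

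Concretely, I would proceed as follows. First, $\{\omz,\omp,\omn\}$ and $\{\tx_0,\txp,\txn\}$ are two free generating sets of the one $\Q$-algebra $\fA$ (the passage between them is an invertible linear substitution of the letters, and the shuffle product is linear in each letter), so the identity of $\fA$ is an isomorphism of shuffle algebras from ``$\fA$ in the $\om$-alphabet'' onto the shuffle algebra $\mathfrak{B}$ of level‑two Euler‑sum words over $\{\tx_0,\txp,\txn\}$. Under this identification: $\fA^1$ (words not ending in $\omz$) equals $\mathfrak{B}^1$ (words not ending in $\tx_0$), since $\omz=\tx_0$ and $\fA\omp+\fA\omn=\fA\txp+\fA\txn$; and $\fA^0$ is contained in $\mathfrak{B}^0$, the span of convergent (admissible) $\tx$-words — those beginning with $\tx_0$ or $\txn$ and not ending in $\tx_0$ — because every admissible $\om$-word begins with $\omz=\tx_0$ and ends with $\omp$ or $\omn$, hence expands into $\tx$-words beginning with $\tx_0$ and ending with $\txp$ or $\txn$. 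On $\fA^0$ the two readings of $\evaML$ (the convergent iterated integral on $[0,1]$) coincide, and Chen's theory — the argument of Proposition~\ref{prop:MMVshahomo}, which applies verbatim to all convergent words — makes $\evaML\colon(\mathfrak{B}^0,\sha)\to\R$ an algebra homomorphism.

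Now comes the one external input: in the $\tx$-alphabet only $\txp$ produces a divergence when it is the leading letter, so the Radford‑type structure theorem for shuffle algebras (as for ordinary MZVs; see, e.g., \cite{Z2016}) gives the polynomial‑algebra decomposition $(\mathfrak{B}^1,\sha)\cong(\mathfrak{B}^0,\sha)[\txp]$. Hence $\evaML\colon(\mathfrak{B}^0,\sha)\to\R$ extends uniquely to an algebra homomorphism $(\mathfrak{B}^1,\sha)\to\R[T]$ with $\txp\mapsto T$; transporting it back along $\mathfrak{B}^1_\sha=\fA^1_\sha$ produces the desired $\evaML_\sha$, which by construction restricts to $\evaML$ on $\fA^0$. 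Finally one reads off the normalizations: $\txn\in\mathfrak{B}^0$ is convergent with $\evaML(\txn)=\int_0^1\tfrac{-dt}{1+t}=-\log 2$, so
\[\evaML_\sha(\omp)=\evaML_\sha(\txp+\txn)=T-\log 2,\qquad \evaML_\sha(\omn)=\evaML_\sha(\txp-\txn)=T+\log 2,\]
and uniqueness of the extension (restricting to the iterated integral on all admissible words and sending $\txp$ to $T$) comes from the same polynomial structure.

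I expect the only genuinely delicate point to be the one that forces the detour: one cannot present $(\fA^1,\sha)$ directly as a polynomial algebra over $(\fA^0,\sha)$ on the two letters $\omp,\omn$ — already in weight two such a candidate presentation is one dimension short, missing the primitive word $\omp\omn-\omn\omp$ — so the change of alphabet, which diagonalizes $\Q\omp+\Q\omn$ into its singular line $\Q\txp$ and a convergent complement $\Q\txn$, is essential rather than cosmetic. The remaining work is the routine but necessary bookkeeping that the subalgebras $\fA^0\subseteq\fA^1$ go over to the expected $\tx$-subalgebras $\mathfrak{B}^0\subseteq\mathfrak{B}^1$ (here one uses that an admissible word begins with $\omz$) and that the two meanings of $\evaML$ match up.
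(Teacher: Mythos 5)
Your argument is correct, and it is essentially the standard mechanism that the paper leaves implicit (Propositions 2.6 and 2.7 are stated without proof, with only a pointer to the Ihara--Kaneko--Zagier regularization): pass to the alphabet $\{\tx_0,\txp,\txn\}$, in which only $\txp$ is singular at $t=1$; invoke the Radford-type decomposition of the shuffle algebra as a polynomial algebra over its convergent part with $\txp$ as the variable; send $\txp\mapsto T$; and read off $\evaML_\sha(\omp)=T-\log 2$, $\evaML_\sha(\omn)=T+\log 2$ from $\txp=\frac12(\omp+\omn)$, $\txn=\frac12(\omp-\omn)$ and $\int_0^1\txn=-\log 2$. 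Your bookkeeping is also right, granted the evident correction (which you silently make) that $\fA^0$ must consist of words \emph{beginning with} $\omz$; the paper's ``not beginning with $\om_1$'' would admit divergent words such as $\omn\omp$, since $\omn$ is just as singular at $t=1$ as $\omp$.

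One point deserves to be promoted from your parenthetical remark to the statement itself. Your weight-two count, showing that $\omp\omn-\omn\omp$ does not lie in the subalgebra of $(\fA^1,\sha)$ generated by $\fA^0\cup\{\omp,\omn\}$, does more than justify the detour through the $\tx$-alphabet: it shows that the uniqueness clause of the proposition cannot be read literally, because the stated data (the values on $\fA^0$ together with the images of $\omp$ and $\omn$) do not pin down $\evaML_\sha(\omp\omn)$ at all. Writing $\txn\txp=\frac14(\omp\omp-\omn\omn)+\frac14(\omp\omn-\omn\omp)$, one sees that already the convergent integral $\evaML(\txn\txp)$ is an extra input not determined by the stated properties. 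The uniqueness you actually establish --- and the only one available --- is relative to the full convergent subalgebra in the $\tx$-alphabet (words not beginning with $\txp$ and not ending with $\tx_0$) together with $\txp\mapsto T$; that is the hypothesis under which the polynomial structure forces uniqueness, and it is how the proposition should be stated.
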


\begin{re}
By abuse of notation, we also write $\evaM_\sha(\bfw)$ as $\evaM_\sha(\bfk;\bfeps;T)$
for all $\bfw=\z_{k_r,\eps_r}\dots \z_{k_1,\eps_1}\in\fA^1$ and
similarly for $\evaM_*$. For admissible $\bfw$, we write $\evaM_\sha(\bfk;\bfeps;T)
=\evaM_*(\bfk;\bfq(\bfeps);T)$ simply as $M(\bfw)$,
where $\bfq(\eps_1,\dots,\eps_r)=(\eps_1\eps_2,\dots,\eps_{r-1}\eps_r ,\eps_r )$.
\end{re}

We can now apply the same mechanism as in \cite{IKZ2006} to derive the following regularized DBSFs.

\begin{thm}\label{thm:RDSoverR}
Define an $\R$-linear map $\rho:\R[T]\to \R[T]$ by
 $$\rho(e^{Tu})=\exp\left(\sum_{n=2}^\infty
 \frac{(-1)^n}{n}\zeta(n)u^n\right)e^{(T-\log 2)u},\qquad |u|<1.$$
Then for any $\bfk\in\N^r$ and $\bfeps\in\{\pm1\}^r$ one has
\begin{equation}\label{equ:RDSthm}
 \evaML_\sha(\bfk;\bfeps;T)= \rho\big(\evaML_*(\bfk;\bfq(\bfeps);T) \big).
\end{equation}
\end{thm}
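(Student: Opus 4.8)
The plan is to follow the strategy of Ihara--Kaneko--Zagier \cite{IKZ2006} essentially verbatim, adapted to level two. Both sides of \eqref{equ:RDSthm} are polynomials in $T$ obtained from the same underlying word by two different regularization procedures, so it suffices to compare the ``regularization constants'' that appear when one passes from the shuffle regularization to the stuffle regularization. Concretely, I would first reduce to the case of the divergent generator: everything is controlled by how a single trailing block of $\omp$'s (equivalently, a power of $\z_{1,1}$) is regularized, because in both $\fA_\sha^1$ and $\fA_*^1$ any word is, modulo admissible words, a polynomial in the divergent word $y:=\omp=\z_{1,1}$ with admissible coefficients, and the two algebra homomorphisms $\evaM_\sha$ and $\evaM_*$ agree on $\fA^0$. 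So the content of the theorem is the single identity relating $\evaM_\sha(y^{\sha n})$ and $\evaM_*(y^{* n})$ as polynomials in $T$, packaged into the generating-function map $\rho$.

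The key steps, in order, would be: (1) Establish the structure theorem that $(\fA_\sha^1,\sha)\cong\fA_\sha^0[y]$ and $(\fA_*^1,*)\cong\fA_*^0[y]$ as polynomial algebras in the one variable $y=\omp$, with the projections onto $\fA^0$ being algebra maps; this is the level-two analogue of \cite[Prop. 1 and Cor. 1]{IKZ2006} and follows from Prop.~\ref{prop:MMVshahomo}--\ref{prop:MMVstIShomo} together with a standard induction on depth. This identifies $\evaM_\sha(\bfk;\bfeps;T)$ and $\evaM_*(\bfk;\bfq(\bfeps);T)$ as the images under $T\mapsto$(regularized value of $y$) of a common $\fA^0$-coefficient polynomial. (2) Compute the shuffle generating function $\sum_{n\ge0}\evaM_\sha(y^{\sha n})u^n$. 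Since $y^{\sha n}=n!\,\omp^n$ formally but the evaluation must be done via the shuffle regularization sending $\omp\mapsto T-\log 2$, this is $e^{(T-\log 2)u}$ after normalizing; the $-\log 2$ comes precisely from Prop.~\ref{prop:extendMsha}. (3) Compute the stuffle generating function $\sum_{n\ge0}\evaM_*(y^{* n})u^n$ using the recursive definition \eqref{equ:defnstufflelevelN}: because $\eps=\eta=1$ throughout, the stuffle recursion for powers of $\z_{1,1}$ is the classical one with the extra factor $2$ for each ``stuffed'' term, and Newton's identities convert this into $\exp(\sum_{n\ge2}\frac{(-1)^n}{n}\zeta(n;\text{appropriate signs})u^n)\cdot e^{(T+2\log2)u}$ or similar — the key point is that the double-coloring $\eps_j\eps_{j+1}$ makes all the relevant diagonal stuffle values reduce to ordinary $\zeta(n)$'s, producing exactly the exponential factor in $\rho$. (4) Compare (2) and (3): the ratio of the two generating functions is $\rho$ applied to $e^{Tu}$, and since $\rho$ is defined $\R$-linearly on the monomials $T^m$ via $e^{Tu}$, applying $\rho$ coefficientwise in the $\fA^0$-coefficient polynomial of step (1) yields \eqref{equ:RDSthm}.

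The main obstacle I expect is step (3): verifying that the stuffle-regularized generating function of $\z_{1,1}^{*n}$ produces precisely $\exp\!\left(\sum_{n\ge2}\frac{(-1)^n}{n}\zeta(n)u^n\right)$ as its ``correction factor,'' with the correct powers of $\log 2$ absorbed into the linear term. This requires carefully tracking (a) the factor $2\delta(\eps,\eta)$ in \eqref{equ:defnstufflelevelN}, which in the all-$\eps=1$ situation is always $2$ and is what ultimately matches the $2$'s appearing in the series stuffle examples like $M(\check2)M(\check2)=\cdots+2M(\check4)$, and (b) the fact that when $\eps_j=-1$ the stuffle-regularized value of $\omn$ is $T+2\log2$ rather than $T$ (Prop.~\ref{prop:extendM*}), so the bookkeeping of which color appears is genuinely different from the MZV case. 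One must show that, after the change of regularization variable $\omp\mapsto$ (shuffle value) vs.\ (stuffle value) and the sign-collapse $\eps_j\eps_{j+1}$, the discrepancy is governed only by the single universal series $\sum \frac{(-1)^n}{n}\zeta(n)u^n$ and the single shift $-\log 2$ — i.e., that no genuinely level-two data (like $\zeta(\bar n)$) survives in $\rho$. This is the heart of why the statement has exactly the same shape as the classical one, and it is where I would spend most of the effort; the remaining steps are formal manipulations of generating functions and the polynomial structure theorem, parallel to \cite{IKZ2006}.
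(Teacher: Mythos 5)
Your overall plan is the one the paper itself points to (the Ihara--Kaneko--Zagier mechanism), but there is a genuine gap at its core: the assertion in your step (1) that the shuffle and stuffle polynomial structures exhibit $\evaM_\sha(\bfk;\bfeps;T)$ and $\evaM_*(\bfk;\bfq(\bfeps);T)$ as specializations of ``a common $\fA^0$-coefficient polynomial'' is false, and step (4) collapses with it. Writing a fixed word $\bfw\in\fA^1$ as $\sum_i u_i\sha y^{\sha i}$ with $u_i\in\fA^0_\sha$ and as $\sum_j v_j * y^{*j}$ with $v_j\in\fA^0_*$ produces genuinely different admissible coefficients $u_i\neq v_i$; the paper's own weight-3 example shows this, the constant terms being $-2M(\check 1,2)$ on the shuffle side but $-M(1,\check 2)$ on the stuffle side. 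Indeed, if the two decompositions shared their coefficients, the comparison map would be the bare substitution $T\mapsto T-\log 2$ and the factor $\exp\bigl(\sum_{n\ge2}\frac{(-1)^n}{n}\zeta(n)u^n\bigr)$ could never arise. For the same reason your steps (2)--(3) carry no force: $\evaM_\sha(y^{\sha n})=(T-\log 2)^n$ and $\evaM_*(y^{*n})=T^n$ hold \emph{by definition} of the two homomorphisms, so the ``ratio'' of those generating functions is $e^{-u\log 2}$, not $\rho$. The Newton's-identity computation you sketch instead evaluates $\evaM_*$ on the concatenation words $\z_{1,1}^{\,n}$ (i.e.\ the regularized values of $\{1\}_n$), which is related but does not by itself relate the two regularizations of an arbitrary word.

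What the IKZ mechanism actually requires is analytic input, and none of it appears in your outline: (i) the truncated sums $M_{<M}(\bfk;\bfq(\bfeps))$ have an asymptotic expansion $\evaM_*(\bfk;\bfq(\bfeps);\,\log\text{-type parameter})+o(1)$ as $M\to\infty$; (ii) the associated one-variable generating series $\mathrm{Li}_{\bfw}(z)$ has the expansion $\evaM_\sha\bigl(\bfk;\bfeps;\log\frac{1}{1-z}\bigr)+o(1)$ as $z\to1^-$; and (iii) Abel summation $\mathrm{Li}_{\bfw}(z)=(1-z)\sum_M M_{<M}\,z^M$ converts one expansion into the other, the estimate $(1-z)\sum_M M^u z^M\sim\Gamma(1+u)(1-z)^{-u}$ being the sole source of the factor $\Gamma(1+u)e^{\gamma u}=\exp\bigl(\sum_{n\ge2}\frac{(-1)^n}{n}\zeta(n)u^n\bigr)$, while the shift $-\log 2$ comes from the level-two reparametrization $\log\frac{1}{1-z^2}=\log\frac{1}{1-z}-\log 2+o(1)$. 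A secondary issue: $\fA^1$ contains two divergent letters, $\omp$ and $\omn$, so the structure theorem is a polynomial algebra in two generators over the convergent part (their difference corresponding to the convergent form $\frac{2\,dt}{1+t}$ with value $2\log 2$, which is why both may consistently be sent to $T+\mathrm{const}$); your one-variable formulation needs to be adjusted accordingly, though this is minor compared with the main gap.
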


A natural question now arises: for any admissible MMV $M$ is $\log(2)M$ still in the MMV world, i.e., can $\log(2)M$ be expressed as a $\Q$-linear combination of MMVs? We will answer this question in the last section.

\begin{exa} Let's consider a weight three example using a regularized MMV. We have
\begin{align*}
M_\sha(\check{2},\check{1})=&\, M_\sha(\z_{1,-1}\z_{1,-2})= M_\sha(\omn\omz\omn)\\
=&\,  M_\sha(\omn\sha \omz\omn-2 \omz\omn\omn)=(T+\log 2)M(\check{2})-2M_\sha(\check{1},2).
\end{align*}
On the other hand,
\begin{align*}
M_*(\check{2},1)=&\, M_*\big(\bfq(\z_{1,-1}\z_{1,-2})\big)=M_*(\z_{1,1}\z_{1,-2})\\
 =&\, M_*(\z_{1,1}* \z_{1,-2} -  \z_{1,-2}\z_{1,1})=TM(\check{2})-M_*(1,\check{2}).
\end{align*}
By Theorem~\ref{thm:RDSoverR}, we get
\begin{align*}
(T+\log 2)M(\check{2})-2M(\check{1},2)=&\, \rho\Big(TM(\check{2})-M(1,\check{2})\Big)=(T-\log 2)M(\check{2})-M(1,\check{2}) \\
&\, \Lra 2M(\check{1},2)=2\log(2) M(\check{2})+M(1,\check{2}) \Big(=\frac{7}{2}\zeta(3)\Big).
\end{align*}
\end{exa}

The example above is a particular case of the following result. The more general result
will be proved in the last section, but unfortunately requires much more complicated mechanism.

\begin{pro} Let $\eps=\pm1$ and $\bfk$ be an admissible composition of positive integers of weight $w$.
Then $\log(2) M(\bfk;1,\dots,1,\eps)$ can be
expressed as a $\Q$-linear combination of MMVs of weight $w+1$.
\end{pro}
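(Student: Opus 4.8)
The plan is to extract the coefficient of $\log 2$ from the regularized double shuffle relation in Theorem~\ref{thm:RDSoverR} applied to a carefully chosen pair of words, and to show that this coefficient equals a rational multiple of $\log(2)M(\bfk;1,\dots,1,\eps)$ plus a $\Q$-linear combination of (regularized) MMVs. Concretely, write $\bfk=(k_1,\dots,k_r)$ with $k_r\ge 2$, and consider the MMV $M(\bfk;1,\dots,1,\eps)$, whose associated admissible word is $\bfw=\z_{k_r,\eps}\z_{k_{r-1},\eps}\cdots\z_{k_1,\eps}$ under $\bfq$. First I would take the non-admissible word $\bfw'=\omn\,\bfw$ (equivalently, prepend $\z_{1,-1}$ in the stuffle picture, or more precisely work with $\z_{1,-\eps}\,\bfw$ so that after shifting the relevant stuffle produces the weight-$(w+1)$ term $\z_{k_1+1,\eps}$ at the end) and compute both $\evaM_\sha$ and $\evaM_*$ of the product of $\omn$ (resp.\ $\z_{1,-1}$) with $\bfw$.

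Next I would carry out the two regularized expansions explicitly. On the shuffle side, Prop.~\ref{prop:extendMsha} sends $\omn\mapsto T+\log 2$, so $\evaM_\sha(\omn\sha\bfw)=(T+\log 2)M(\bfw)$, while $\omn\sha\bfw$ also equals $\omn\bfw$ plus a sum of admissible words of weight $w+1$ (the shuffles where $\omn$ is inserted into the interior of $\bfw$), all of which evaluate to genuine MMVs. On the stuffle side, Prop.~\ref{prop:extendM*} sends $\omn=\z_{1,-1}\mapsto T+2\log 2$; expanding $\z_{1,-1}*\bfq(\bfw)$ via \eqref{equ:defnstufflelevelN} gives $(T+2\log 2)M(\bfw)$ minus interior-insertion MMVs minus the stuffed term, and crucially the stuffed term carries the Kronecker factor $2\delta(\eps,\eps)$ contributing $2M(\dots,k_1+1,\dots)$. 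Then I would apply $\rho$ to the stuffle side. Since $\rho(e^{Tu})=\bigl(1+O(u^2)\bigr)e^{(T-\log2)u}$, the map $\rho$ replaces each isolated power of $T$ by $(T-\log 2)$ plus corrections involving $\zeta(n)$ times lower powers of $T$; acting on the linear-in-$T$ expression $(T+2\log2)M(\bfw)-(\text{weight-}(w+1)\text{ MMVs})$ it produces $(T-\log2+2\log2)M(\bfw) = (T+\log2)M(\bfw)$ from the leading piece, with the $O(u^2)$ corrections landing in strictly lower powers of $T$ and hence not affecting the $T^1$ and $T^0$ comparison at this order.

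Finally I would equate the two sides of \eqref{equ:RDSthm} and compare constant terms in $T$. The terms $(T+\log2)M(\bfw)$ match on both sides automatically (as they must, since $M(\bfw)$ is genuinely convergent), so the interesting information is in the $T^0$ coefficient: on the shuffle side it is $\log2\cdot M(\bfw)$ plus a $\Q$-combination of weight-$(w+1)$ MMVs, on the stuffle side it is $2\log2\cdot M(\bfw)$ plus a $\Q$-combination of weight-$(w+1)$ MMVs coming both from the stuffle product (including the $2M(\dots)$ stuffed term) and from the $\rho$-correction (which contributes $\zeta(n)$ times MMVs, and products like $\zeta(n)M(\bfk')$ are themselves $\Q$-combinations of MMVs by Prop.~\ref{prop:MMVstIShomo}). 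Solving the resulting linear identity for $\log2\cdot M(\bfw)$ then expresses it as a $\Q$-combination of weight-$(w+1)$ MMVs, which is exactly the claim. The main obstacle I anticipate is bookkeeping: one must check that \emph{every} term produced on the stuffle side after applying $\rho$ genuinely lies in the $\Q$-span of MMVs of weight $w+1$ — this uses that $\zeta(n)$ is itself an MMV (indeed $\zeta(n)=2^{-n}(M(\{?\})\dots)$, or simply a level-one MZV hence an MMV via the stuffle homomorphism) and that products of MMVs stay in the MMV span by Prop.~\ref{prop:MMVstIShomo} — and that no uncancelled $\log^2 2$ or higher-transcendentality term survives, which follows because $\rho$ only introduces a single $-\log2$ per factor of $e^{Tu}$ and we started from an expression linear in $T$. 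A secondary subtlety is the placement of the prepended letter: one should verify that prepending $\omn$ (rather than $\omp$) is the correct choice so that the weight-$(w+1)$ output word is admissible and the leading signs work out; if $\eps$-dependence forces $\omp$ in some case, that word is non-admissible and one regularizes it too, but its regularization is again a polynomial in $T$ with MMV coefficients, so the argument goes through with one extra regularization step.
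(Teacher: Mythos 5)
Your overall strategy --- prepend a weight-one letter, expand both regularizations, and push the stuffle side through $\rho$ in Theorem~\ref{thm:RDSoverR} --- is exactly the paper's, but the decisive computation on the stuffle side is wrong, and the error sits precisely where the argument lives or dies. When you prepend $\omn$ to the integral word for $M:=M(\bfk;1,\dots,1,\eps)$, the shuffle side is indeed $(T+\log 2)M$ plus weight-$(w+1)$ MMVs. The stuffle side, however, is the regularization of the corresponding \emph{series}, and the signature twist $\bfq$ multiplies the prepended letter's sign by that of its neighbour: the new outermost summation index has signature $\eps\cdot(-1)$, not $-1$. For $\eps=-1$ that index is \emph{even}, the divergent tail is $\sum 2/(2n)$, and the stuffle side is $T\cdot M$ plus MMVs --- not $(T+2\log 2)M$ as you assert. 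Applying $\rho$ then yields $(T-\log 2)M+\cdots$ against the shuffle side's $(T+\log 2)M+\cdots$, and the surviving discrepancy $2\log 2\cdot M$ is the entire content of the proposition. With your assignment $\z_{1,-1}\mapsto T+2\log 2$ on the stuffle side, $\rho$ turns the leading term into $(T+\log 2)M$, both the $T^{1}$ and the $T^{0}$ coefficients of the two sides agree identically, and the relation degenerates into an identity among weight-$(w+1)$ MMVs carrying no information about $\log 2\cdot M$. Your closing step, which plays a shuffle constant term $\log 2\cdot M+\cdots$ against a stuffle constant term $2\log 2\cdot M+\cdots$, compares the shuffle side with the stuffle side \emph{before} $\rho$ is applied; that is not what \eqref{equ:RDSthm} asserts, and it contradicts your own (correct) observation two sentences earlier that $\rho$ sends $(T+2\log 2)M$ to $(T+\log 2)M$.

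A secondary gap: even after this is repaired, the single-letter trick only bites when $\eps=-1$. Prepending $\om_{\gd}$ contributes $-\gd\log 2$ on the shuffle side and, after $\rho$, $-\eps\gd\log 2$ on the stuffle side, so the two constants coincide for every choice of $\gd$ whenever $\eps=+1$; your hedge about switching to $\omp$ therefore does not rescue the even case (the paper's own two-line treatment of $\eps=+1$ degenerates in the same way). For $\eps=+1$ a direct argument is available and cleaner: $M(\bfk;1,\dots,1)=2^{r-w}\zeta(\bfk)$ and $\log 2=-\zeta(\bar 1)$, so the stuffle product $\zeta(\bar 1)*\zeta(\bfk)$ writes $\log 2\cdot M$ as a $\Z$-combination of alternating MZVs of weight $w+1$, each of which is a $\Q$-combination of MMVs upon splitting the summation indices by parity. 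I would also flag that your identification of the admissible word for $M(\bfk;1,\dots,1,\eps)$ as $\z_{k_r,\eps}\cdots\z_{k_1,\eps}$ (all subscripts equal to $\eps$) follows the paper's stated form of $\bfq$ rather than the one its proof actually uses ($\z_{k_r,\eps}\z_{k_{r-1},1}\cdots\z_{k_1,1}$); this does not change which weight-$(w+1)$ MMVs appear in the error terms, but it is the same conflation of word-signatures with series-signatures that produces the main error above.
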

\begin{proof} For convenience, given any two polynomials $M_1(T)$ and $M_2(T)$
we denote by $M_1\equiv M_2$ if their difference $M_1-M_2$ can be
expressed as a $\Q$-linear combination of MMVs of weight $w+1$.
Assume $\eps=-1$ first. Then we have
\begin{align*}
M_\sha( z_{1,-1} z_{k_r,-1}z_{k_{r-1},1}\dots z_{k_1,\eps_1})
\equiv &\,  M_\sha(\omn\sha z_{k_r,-1}z_{k_{r-1},1}\dots z_{k_1,\eps_1})
=(T+\log 2)M(\bfk;\bfeps).
\end{align*}
On the other hand,
\begin{align*}
M_*\big(\bfq(z_{1,-1} z_{k_r,-1}z_{k_{r-1},1}\dots z_{k_1,\eps_1})\big)= &\,
M_*(z_{1,1} z_{k_r,-1}z_{k_{r-1},1}\dots z_{k_1,\eps_1}) \\
\equiv &\,
   M_*(\z_{1,1}* z_{k_r,-1}z_{k_{r-1},1}\dots z_{k_1,\eps_1})=TM(\bfk;\bfeps).
\end{align*}
By Theorem~\ref{thm:RDSoverR}, we get
\begin{align*}
(T+\log 2)M(\bfk;-1,1,\dots,1) \equiv
(T-\log 2)M(\bfk;-1,1,\dots,1).
\end{align*}
Thus $\log(2) M(\bfk;-1,1,\dots,1)\equiv 0$.

Now if $\eps=1$ then the same idea as above shows that
\begin{align*}
M_\sha( z_{1,1} z_{k_r,1}\dots z_{k_1,\eps_1}) \equiv &\, (T-\log 2)M(\bfk;1,\dots,1), \\
M_*\big(\bfq(z_{1,1} z_{k_r,1}\dots z_{k_1,\eps_1})\big) \equiv &\ TM(\bfk;1,\dots,1).
\end{align*}
Thus $\log(2) M(\bfk;-1,1,\dots,1)\equiv 0$ by Theorem~\ref{thm:RDSoverR} again.
This completes the proof of the proposition.
\end{proof}

\begin{thm} \label{thm:dualMMVo}
Let $\bfk\in\N^r$ be an admissible composition and $\bfeps\in\{\pm 1\}^r$ with $\eps_1=-1$.
Then
\begin{equation}\label{equ:dualMMVo}
 M \big(\omn^{l_1} \omz^{k_1}\om_{\eps_2}^{l_2} \omz^{k_2} \cdots\om_{\eps_r}^{l_r} \omz^{k_r}  \big)
=M \big(\omn^{k_r} u_{\eps_r}^{l_r}  \cdots \omn^{k_2} u_{\eps_2}^{l_1}  \omn^{k_1} \omz^{l_1}  \big),
\end{equation}
where $u_{-1}=\omz$ and $u_1=\omz+\omp-\omn$.
\end{thm}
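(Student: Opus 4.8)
The plan is to reduce the duality statement for MMVs to the classical duality for iterated integrals, exactly as in the proof of the Kaneko--Tsumura duality for MTVs (see \cite[Thm.~3.1]{KTA2019}), but carried out at the level of the MMV algebra $\fA$ with the differential forms $\omz,\omp,\omn$ instead of $\tx_0,\txp,\txn$. The key point is the integral representation \eqref{equ:MMVint}, which shows that $M$ of a word is an honest iterated integral $\int_0^1$ of a word in the letters $\omz,\omp,\omn$ on the interval $[0,1]$. The substitution $t\mapsto 1-t$ (equivalently $t\mapsto \frac{1-t}{1+t}$, whichever is the natural involution of $[0,1]$ in this level-2 setting) reverses the path and transforms each 1-form into another; tracking this transformation on the alphabet is the heart of the argument.

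First I would record how the substitution $t\mapsto\frac{1-t}{1+t}$ acts on the three generators. A direct computation gives $\omz=\frac{dt}{t}\mapsto -\omn$ (up to sign bookkeeping), $\omn=\frac{2\,dt}{1-t^2}\mapsto -\omz$, while $\omp=\frac{2t\,dt}{1-t^2}$ maps to a combination of $\omz,\omn,\omp$; this is precisely the source of the asymmetric substitution rule $u_{-1}=\omz$, $u_{1}=\omz+\omp-\omn$ in the statement. (In fact $\omp=\omn-2\omz$-type identities, i.e.\ $\txp=\tfrac12(\omn+\omp)$, $\txn=\tfrac12(\omp-\omn)$, let one compute everything in the $\tx_{\pm1},\tx_0$ basis and translate back.) Second, I would invoke the standard fact that reversing an iterated integral over $[0,1]$ and applying a change of variables sends the word $\eta_1\eta_2\cdots\eta_n$ to $\sigma(\eta_n)\cdots\sigma(\eta_1)$, where $\sigma$ is the transformation on 1-forms just computed, and that $\int_0^1$ is preserved. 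Applying this to the admissible word $\omn^{l_1}\omz^{k_1}\om_{\eps_2}^{l_2}\omz^{k_2}\cdots\om_{\eps_r}^{l_r}\omz^{k_r}$ and simplifying the letter-by-letter image then yields the right-hand side of \eqref{equ:dualMMVo}. The condition $\eps_1=-1$ is exactly what guarantees the word begins with $\omn$, so that after reversal it ends in $\sigma(\omn)=\omz$-block and the resulting word is again admissible, i.e.\ lies in $\fA^0$ and represents a genuine MMV — this is why duality must be restricted to the odd-signature class MMVo.

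The main obstacle, and the step deserving the most care, is bookkeeping the behavior of $\omp$ under the substitution: unlike $\omz$ and $\omn$, which are swapped cleanly (up to sign), $\omp$ spreads into a $\Q$-linear combination of all three forms, and one must check that when $\omp$ occurs in the original word (i.e.\ when some $\eps_j=1$), its image after reversal assembles correctly into the block $u_1^{\bullet}=(\omz+\omp-\omn)^{\bullet}$ sitting in the claimed position, with the exponents $k_j,l_j$ correctly transposed. A clean way to manage this is to first prove the identity for words written in the $\tx_0,\txp,\txn$ alphabet using the known level-2 change of variables from \cite{KTA2019} (where only $\tx_0\leftrightarrow\txp$-type swaps occur), and then push it forward along the linear change of basis $\omn=\txp-\txn$, $\omp=\txp+\txn$; the asymmetry $u_{-1}\ne u_1$ emerges automatically from the asymmetry of this basis change. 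I would also need to double-check convergence/admissibility at the endpoints throughout (no word beginning with $\omp$, none ending with $\omz$), and handle the degenerate cases $l_1=0$ or some $l_j=0$ by the usual convention that a zero exponent simply deletes that block. Once the alphabet-level transformation rule is nailed down, the rest is a routine reversal-of-iterated-integral argument.
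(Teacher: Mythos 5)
Your proposal is correct and follows exactly the route the paper intends: the paper's proof is the one-line remark that the identity ``follows immediately from the substitution $t\to\frac{1-t}{1+t}$,'' and your computation of how that involution acts on the alphabet — $\omz\mapsto-\omn$, $\omn\mapsto-\omz$, $\omp\mapsto-(\omz+\omp-\omn)$, with the signs absorbed by the path reversal — is precisely the detail left to the reader and correctly explains the asymmetric rule $u_{-1}=\omz$, $u_1=\omz+\omp-\omn$. Only note that the involution must be $t\mapsto\frac{1-t}{1+t}$ (not $t\mapsto 1-t$), as you in fact commit to in the computation.
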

\begin{proof}
This follows immediately from the substitution $t\to \frac{1-t}{1+t}$. We leave the detail to the interested reader.
\end{proof}

\begin{exa} In weight 4, we have the duality relation
\begin{equation*}
    M(\check{1},1,2)=\int_0^1 \omz\omp\omn^2=\int_0^1 \omz^2(\omz+\omp-\omn)\omn=M(\check{4})+M(\check{1},\check{3})-M(\check{1},3).
\end{equation*}
\end{exa}

\begin{re} We know there should be $\Q$-linear relations among Euler sums that are not consequences
of the finite DBSF, see \cite[Remark 3.5]{Zhao2010a}. The same should hold for MMVs.
\end{re}

We end this section by the following parity theorem of MMVs of arbitrary depth which follows from the general parith result of Panzer on colored MZVs (see \cite{Panzer2017}).
\begin{thm}\label{thm:MMVparity}
Let $\bfk$ be an admissible composition and assume its depth $r$ and weight $w$ are of different parity. Then $M(\bfk;\bfeps)$ can be expressed as a $\Q$-linear combination of multiple $M$-values of lower depths and products of multiple $M$-values with sum of depths smaller than $r$.
\end{thm}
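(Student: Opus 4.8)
The plan is to reduce the parity statement for MMVs to Panzer's parity theorem for colored MZVs, which is already available to us. Recall from the discussion right after \eqref{equ:MMVdefn} that every MMV $M(\bfk;\bfeps)$ is, by expanding the products $(1+\eps_j(-1)^{m_j})$, a $\Q$-linear combination (in fact a $\Z[1/2]$-combination, but we can clear denominators harmlessly) of alternating MZVs $\zeta(\bfl;\bfeta)$ of the same weight $w$ but of depth at most $r$. Conversely, since $\omz=\tx_0$, $\omn=\txp-\txn$ and $\omp=\txp+\txn$, each $M(\bfk;\bfeps)$ is directly an iterated integral in the alphabet $\{\tx_0,\txp,\txn\}$, so it lies in the span of weight-$w$, depth-$\le r$ Euler sums; this is the cleanest way to see the reduction. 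Panzer's theorem says that when $w$ and $r$ have opposite parity, each such Euler sum of depth exactly $r$ is a $\Q$-linear combination of Euler sums of depth $<r$ together with products of Euler sums whose depths sum to $<r$. The issue is that Panzer's output is phrased in terms of \emph{Euler sums}, not MMVs, so the main work is to show that one can re-expand everything back into the MMV world.

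The key step, then, is a change-of-basis lemma: on the weight-$w$ graded piece, the $\Q$-span of MMVs of depth $\le d$ equals the $\Q$-span of those alternating MZVs $\zeta(\bfl;\bfeta)$ of depth $\le d$ that actually occur as components of MMVs. More precisely, I would work on the integral side. Writing $V^{(d)}_w$ for the $\Q$-span of iterated integrals $\int_0^1 w$ with $w$ a word of weight $w$ in $\{\tx_0,\txp,\txn\}$ and depth (number of $\tx_{\pm1}$'s) at most $d$, one has $V^{(d)}_w = \Q\text{-span of } \zeta(\bfl;\bfeta)$ with $|\bfl|=w$, $\dep(\bfl)\le d$, and this same space is spanned by the MMVs of weight $w$ and depth $\le d$ (plus, for the non-admissible corrections, regularized versions, but by restricting attention to admissible words this is avoided). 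The containment ``MMVs $\subseteq$ Euler sums'' is immediate from $\omz,\omn,\omp\in\Q\tx_0+\Q\txp+\Q\txn$. For the reverse containment one notes that $\txp=\tfrac12(\omp+\omn)$ and $\txn=\tfrac12(\omp-\omn)$, so every word in $\{\tx_0,\txp,\txn\}$ expands into words in $\{\omz,\omp,\omn\}$ of the same weight and the same depth; hence $V^{(d)}_w$ is exactly the $\Q$-span of integrals of weight-$w$, depth-$\le d$ words in the MMV alphabet, i.e. of MMVs of that weight and depth. The admissibility of the starting word $\bfk$ guarantees the corresponding $\{\tx_i\}$-word is admissible, so no regularization enters.

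With this lemma in hand the proof assembles quickly. Given admissible $\bfk$ of depth $r$ and weight $w$ of opposite parity, write $M(\bfk;\bfeps)=\sum c_i \zeta(\bfl^{(i)};\bfeta^{(i)})$ with each $\zeta(\bfl^{(i)};\bfeta^{(i)})$ of weight $w$ and depth $\le r$; those of depth $<r$ are already in the MMV span of depth $<r$ by the lemma. For those of depth exactly $r$, apply Panzer's parity theorem: each becomes a $\Q$-combination of Euler sums of depth $<r$ and of products of Euler sums with depths summing to $<r$. Applying the lemma to the linear terms rewrites them as $\Q$-combinations of MMVs of depth $<r$. For the product terms, I would apply the lemma factorwise: a product $\zeta(\bfa;\bfalpha)\zeta(\bfb;\bfbeta)$ with $\dep(\bfa)+\dep(\bfb)<r$ becomes a product of a $\Q$-combination of MMVs of depth $\dep(\bfa)$ and a $\Q$-combination of MMVs of depth $\dep(\bfb)$, hence a $\Q$-combination of products of MMVs with depth sum $<r$, which is exactly what the statement allows. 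Collecting terms yields the claimed expression. The main obstacle is purely bookkeeping: making the change-of-basis lemma precise uniformly in the depth filtration and checking that Panzer's theorem indeed respects weight (so that we stay in weight $w$) and delivers a \emph{depth} filtration statement in the form quoted — once that is confirmed, the argument is a direct translation, which is why we only sketch it here.
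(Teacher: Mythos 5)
Your overall strategy---expand $M(\bfk;\bfeps)$ into Euler sums, apply Panzer's parity theorem, and convert back---is exactly the route the paper intends (the paper offers no details beyond citing Panzer). However, your change-of-basis lemma is false as stated, and this is a genuine gap rather than bookkeeping. The weight-$w$, depth-$\le d$ span of \emph{admissible} Euler sums is strictly larger than the corresponding span of MMVs: already for $w=d=1$ the Euler sum $\zeta(\bar 1)=-\log 2$ is admissible and nonzero, while there are no convergent MMVs of weight $1$ at all (both $M(1)$ and $M(\check 1)$ diverge), so the weight-one piece of $\MMV$ is $\{0\}$. The source of the failure is visible on the integral side: an admissible Euler-sum word may begin with $\txn$ (this happens exactly when the value ends in $\bar 1$), and under $\txn=(\omp-\omn)/2$ it becomes a combination of words beginning with $\omp$ or $\omn$, each of which gives a \emph{divergent} integral because $\omp$ and $\omn$ both have a pole at $t=1$. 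So your claim that ``by restricting attention to admissible words this is avoided'' is wrong: admissibility is preserved in the direction MMV $\to$ Euler sums (since $k_r\ge 2$ forces the word to begin with $\omz$), but not in the reverse direction, which is the direction you actually need after applying Panzer.

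This matters concretely because Panzer's theorem outputs lower-depth Euler sums and products thereof, and these routinely involve values ending in $\bar 1$ and powers of $\log 2$; your factorwise rewriting of a product such as $\zeta(\bar 1)\zeta(\bar 2)$ cannot be carried out inside the space of admissible MMVs. To repair the argument you must either (i) work with the shuffle/stuffle-regularized maps $\evaML_\sha$, $\evaML_*$ and Theorem \ref{thm:RDSoverR} throughout and show that the polynomial-in-$T$ parts cancel while keeping track of the depth filtration, or (ii) restrict your lemma to Euler sums whose last entry is at least $2$ (for these, the parity-splitting of the series does land in MMVs of the same weight and depth, as in the proof of Theorem \ref{thm:MMVdim}) and then separately prove that the leftover contributions, which are $\Q$-combinations of $\log^j 2$ times such values, lie in the MMV span \emph{with the required depth bound}. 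Note that the corollary to Theorem \ref{thm:MMVdim} gives $\log(2)\,\MMV_w\subseteq\MMV_{w+1}$ but with no control on depth, so it does not by itself finish the job. Until one of these repairs is carried out, the proof is incomplete.
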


\section{Kaneko-Tsumura $\psi$-Values and Convoluted $T$-Values}\label{sec:KTV}
In this section, we establish some explicit formulas involving Kaneko-Tsumura $\psi$-values and the convoluted $T$-values. First, we evaluate a few families of integrals involving the natural logarithm in terms of MTVs and MSVs. Recall that $\{l\}_m$ means the sequence $\underbrace{l,\ldots,l}_{m \text{\;times}}$.

\begin{thm}\label{thm-I} For positive integers $m$ and $n$, the following identities hold.
\begin{align}
&\begin{aligned}
\int_{0}^1 t^{2n-2} \log^{2m}\tt dt&= \frac{2(2m)!}{2n-1} \sum_{j=0}^m {\bar \zeta}(2m-2j)T_n(\{1\}_{2j}),\label{ee}
\end{aligned}\\
&\begin{aligned}
\int_{0}^1 t^{2n-2} \log^{2m-1}\tt dt&= -\frac{2(2m-1)!}{2n-1} \sum_{j=0}^{m-1} {\bar \zeta}(2m-1-2j)T_n(\{1\}_{2j})\\&\quad-\frac{(2m-1)!}{2n-1} S_n(\{1\}_{2m-1}),\label{eo}
\end{aligned}\\
&\begin{aligned}
\int_{0}^1 t^{2n-1} \log^{2m}\tt dt&=\frac{(2m)!}{n} \sum_{j=0}^{m-1} {\bar \zeta}(2m-1-2j)T_n(\{1\}_{2j+1})\\&\quad+\frac{(2m)!}{2n} S_n(\{1\}_{2m}),\label{oe}
\end{aligned}\\
&\begin{aligned}
\int_{0}^1 t^{2n-1} \log^{2m-1}\tt dt&= -\frac{(2m-1)!}{n} \sum_{j=0}^{m-1} {\bar \zeta}(2m-2-2j)T_n(\{1\}_{2j+1}),\label{oo}
\end{aligned}
\end{align}
where ${\bar \zeta}(m):=-\zeta(\overline{ m})$, and ${\bar \zeta}(0)$ should be interpreted as $1/2$ wherever it occurs..
\end{thm}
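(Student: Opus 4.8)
The plan is to prove the four identities together by induction on $m$, using a single integration by parts that lowers the power of the logarithm. Write $I_k(a):=\int_0^1 t^a\log^k\tt\,dt$ for integers $a\ge0$, $k\ge0$; the claims evaluate $I_{2m}(2n-2)$, $I_{2m-1}(2n-2)$, $I_{2m}(2n-1)$ and $I_{2m-1}(2n-1)$. Since $\frac{d}{dt}\log\tt=-\frac{2}{1-t^2}$, integrating by parts with $u=\log^k\tt$ and the antiderivative $v=\frac{t^{a+1}-1}{a+1}$ of $t^a$ — chosen to vanish at $t=1$, so that $uv=O\big((1-t)\log^k(1-t)\big)\to0$ there while $u(0)=0$ for $k\ge1$ — gives
\begin{equation*}
I_k(a)=\frac{2k}{a+1}\int_0^1\frac{t^{a+1}-1}{1-t^2}\,\log^{k-1}\tt\,dt\qquad(k\ge1).
\end{equation*}
When $a+1=2n$ is even, $\frac{1-t^{2n}}{1-t^2}=\sum_{i=0}^{n-1}t^{2i}$ gives the \emph{clean recursion} $I_k(2n-1)=-\frac{k}{n}\sum_{i=0}^{n-1}I_{k-1}(2i)$; when $a+1=2n-1$ is odd, $\frac{1-t^{2n-1}}{1-t^2}=\sum_{i=0}^{n-2}t^{2i}+\frac{t^{2n-2}}{1+t}$ and the expansion $\frac1{1+t}=\sum_{j\ge0}(-1)^jt^j$ give the \emph{mixed recursion}
\begin{equation*}
I_k(2n-2)=-\frac{2k}{2n-1}\Big(\sum_{i=0}^{n-2}I_{k-1}(2i)+\sum_{j\ge0}(-1)^j I_{k-1}(2n-2+j)\Big),
\end{equation*}
the alternating series converging since $|I_{k-1}(a)|=\int_0^1t^a|\log\tt|^{k-1}dt$ decreases monotonically to $0$ as $a\to\infty$.

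For the base of the induction I would also record the trivial level $m=0$: $I_0(2n-2)=\frac1{2n-1}$ and $I_0(2n-1)=\frac1{2n}$ agree with the right-hand sides of \eqref{ee} and \eqref{oe} under the conventions $T_n(\emptyset)=S_n(\emptyset)=1$, $\bar\zeta(0)=\frac12$ and empty sums $=0$. For the inductive step, assuming all four identities at levels $<m$, I would prove level $m$ in the order \eqref{oo}, \eqref{eo}, \eqref{oe}, \eqref{ee}. The two odd-$t$-exponent identities follow from the clean recursion: it expresses $I_{2m-1}(2n-1)$ (resp.\ $I_{2m}(2n-1)$) through the $I_{2m-2}(2i)$ (resp.\ $I_{2m-1}(2i)$), which are instances of \eqref{ee} at level $m-1$ (resp.\ of the already-proved \eqref{eo} at level $m$); substituting and then collapsing the double sum by the MTHS/MSHS recursions of the excerpt — e.g.\ $T_n(\{1\}_{2j+1})=2\sum_{p=1}^n T_p(\{1\}_{2j})/(2p-1)$ and $S_n(\{1\}_{2m})=2\sum_{p=1}^n S_p(\{1\}_{2m-1})/(2p-1)$ — yields \eqref{oo} and \eqref{oe} verbatim. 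The two even-$t$-exponent identities follow from the mixed recursion: it expresses $I_{2m-1}(2n-2)$ (resp.\ $I_{2m}(2n-2)$) through the $I_{2m-2}$ (resp.\ $I_{2m-1}$) at both even and odd arguments — instances of \eqref{ee} and \eqref{oe} at level $m-1$ (resp.\ of the just-proved \eqref{eo} and \eqref{oo} at level $m$) — and the alternating tail $\sum_{j\ge0}(-1)^j(\cdots)$ is precisely the mechanism that converts truncated sums into the alternating values $\bar\zeta(s)=\sum_{p\ge1}(-1)^{p-1}p^{-s}$, producing \eqref{eo} (with its $S_n$-term) and \eqref{ee}.

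The clean-recursion collapses are routine. The main obstacle I anticipate is the inductive step for \eqref{eo} and \eqref{ee}, where one must resum a doubly-nested alternating series — a sum over $j\ge0$ of $(-1)^j$ times a closed form that is itself a combination of terms $\bar\zeta(\cdot)T_n(\{1\}_\bullet)$ (plus, for \eqref{ee}, an $S_n$-term) — and show it telescopes exactly into the stated expression. This will require a handful of auxiliary identities for the truncated sums $T_n(\{1\}_\bullet)$, $S_n(\{1\}_\bullet)$ (all derivable from the recursions quoted above) and careful bookkeeping of which partial sums of $\bar\zeta$ arise when one separates the summands of argument $<2n$ from the tail of argument $\ge2n$. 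The case $m=1$ of each identity can be verified directly from the two recursions as a sanity check; it already exhibits all these features, e.g.\ \eqref{eo} at $m=1$ reduces to $I_1(2n-2)=-\frac{2}{2n-1}\log2-\frac1{2n-1}\sum_{j=1}^{n-1}\frac1j$, which is exactly $-\frac{2}{2n-1}\bar\zeta(1)-\frac1{2n-1}S_n(\{1\}_1)$.
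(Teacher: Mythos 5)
Your overall framework---induction on the power of the logarithm driven by a single integration by parts---is the same as the paper's, and your ``clean recursion'' $I_k(2n-1)=-\frac kn\sum_{i=0}^{n-1}I_{k-1}(2i)$ is exactly the paper's second recurrence; the deductions of \eqref{oo} and \eqref{oe} from it by collapsing with the MTHS/MSHS recursions are correct and genuinely routine, as you say. The gap is in the even-$t$-exponent case, i.e.\ precisely the two identities \eqref{ee} and \eqref{eo} that carry the $\bar\zeta$ constants and the extra $S_n$-term. Your ``mixed recursion'' trades a finite answer for an infinite alternating tail $\sum_{j\ge0}(-1)^jI_{k-1}(2n-2+j)$, and the resummation of that tail after inserting the induction hypothesis---which you yourself defer as ``the main obstacle'' requiring unspecified ``auxiliary identities'' and ``careful bookkeeping''---is where the entire content of \eqref{ee} and \eqref{eo} lives; it is not carried out. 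Nor is it mere bookkeeping: the even and odd subsequences of the tail are individually divergent (one checks $I_{k-1}(a)\asymp(\log a)^{k-1}/a$, so $\sum_{p\ge n}I_{k-1}(2p-2)$ diverges), and substituting the closed forms for $I_{k-1}$ termwise produces divergent quantities such as $T(\{1\}_{2j+1})$ and the harmonic tail of $\bar\zeta(1)$, so the alternating pairing must be preserved throughout while the partial sums of $\bar\zeta$ recombine with $T_n$ and $S_n$. Your $k=1$ sanity check confirms the answer in the simplest case but does not exhibit the general mechanism.

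The paper sidesteps all of this with one additional idea that your proposal is missing. It integrates by parts with the plain antiderivative $t^{2n-1}/(2n-1)$ and splits
\begin{equation*}
\frac{t^{2n-1}}{1-t^2}=\frac{t}{1-t^2}-\sum_{k=1}^{n-1}t^{2k-1},
\end{equation*}
then evaluates $\int_0^x\frac{t}{1-t^2}\log^{m-1}\left(\frac{1-t}{1+t}\right)dt$ in closed form via the substitution $t\mapsto\frac{1-u}{1+u}$, which turns it into elementary $u$-integrals of $\log^{m-1}(u)$ against $\frac1u$ and $\frac1{1+u}$ and produces the constant $(-1)^{m-1}(m-1)!\,\zeta(\overline{m})$ plus terms vanishing as $x\to1$. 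This yields the \emph{finite} recurrence
\begin{equation*}
\int_0^1t^{2n-2}\log^m\left(\frac{1-t}{1+t}\right)dt=-\frac{2\,m!\,(-1)^m}{2n-1}\,\zeta(\overline{m})-\frac{2m}{2n-1}\sum_{k=1}^{n-1}\int_0^1t^{2k-1}\log^{m-1}\left(\frac{1-t}{1+t}\right)dt,
\end{equation*}
expressing the even-exponent integral through odd-exponent ones of one lower logarithmic degree plus an explicit $\bar\zeta(m)$ term, so that all four inductive steps become collapses of the same routine kind you already performed for \eqref{oo} and \eqref{oe}. To complete your version you would have to prove the resummation lemma you allude to; the shorter fix is to replace your mixed recursion by this substitution step.
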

\begin{proof} Consider integral
\begin{align*}
&\int_{0}^x t^{2n-2}\log^m\tt dt\\&=\frac{x^{2n-1}}{2n-1} \log^m\xx +\frac{2m}{2n-1} \int_{0}^x \frac{t^{2n-1}}{1-t^2} \log^{m-1}\tt dt\\
&=\frac{x^{2n-1}}{2n-1} \log^m\xx-\frac{2m}{2n-1} \int_{0}^x \frac{t-t^{2n-1}}{1-t^2} \log^{m-1}\tt dt\\&\quad +\frac{2m}{2n-1} \int_{0}^x \frac{t}{1-t^2} \log^{m-1}\tt dt\\
&=\frac{x^{2n-1}}{2n-1} \log^m\xx-\frac{2m}{2n-1} \sum_{k=1}^{n-1}\int_{0}^xt^{2k-1}\log^{m-1}\tt dt\\&\quad +\frac{2m}{2n-1} \int_{0}^x \frac{t}{1-t^2} \log^{m-1}\tt dt.
\end{align*}
Applying $t=\frac{1-u}{1+u}$ in last integral on the right-hand side of above, we evaluate
\begin{align*}
&\int_{0}^x \frac{t}{1-t^2} \log^{m-1}\tt dt\\&=\int_{(1-x)/(1+x)}^1 \frac{1-u}{2u(1+u)} \log^{m-1}(u)du\\
&=\frac1{2}\int_{(1-x)/(1+x)}^1 \frac{\log^{m-1}(u)}{u}du-\int_{(1-x)/(1+x)}^1 \frac{\log^{m-1}(u)}{1+u}du\\
&=(-1)^{m-1} (m-1)! \zeta(\overline {m})- \frac{1}{2m} \log^{m}\xx \\&\quad+\sum_{l=0}^{m-1} l!\binom{m-1}{l} (-1)^{l+1}\log^{m-1-l}\xx {\rm Li}_{l+1} \left( \frac{x-1}{x+1}\right),
\end{align*}
where we used the well-know identities
\begin{align*}
\int\limits_0^x {{t^{n - 1}}{{\left( {\log (t)} \right)}^m}} dt = \sum\limits_{l = 0}^m {l!\binom{m}{l}\frac{{{{\left( { - 1} \right)}^l}}}{{{n^{l + 1}}}}{{\left( {\log (x)} \right)}^{m - l}}{x^n}},\quad x\in (0,1).
\end{align*}
Hence, one obtain
\begin{align*}
&\int_{0}^x t^{2n-2}\log^m\tt dt\\&=\frac{x^{2n-1}}{2n-1} \log^m\xx +\frac{2m}{2n-1} \int_{0}^x \frac{t^{2n-1}}{1-t^2} \log^{m-1}\tt dt\\
&=\frac{x^{2n-1}-1}{2n-1} \log^m\xx-\frac{2m}{2n-1} \sum_{k=1}^{n-1}\int_{0}^xt^{2k-1}\log^{m-1}\tt dt\\&\quad -\frac{2m!(-1)^{m}}{2n-1}\zeta(\overline {m})+\frac{2}{2n-1}\sum_{l=1}^{m} l!\binom{m}{l} (-1)^{l}\log^{m-l}\xx {\rm Li}_{l} \left( \frac{x-1}{x+1}\right).
\end{align*}
Similarly, we deduce
\begin{align*}
&\int_{0}^x t^{2n-1}\log^m\tt dt\\
&=\frac{x^{2n}-1}{2n} \log^m \xx -\frac{m}{n} \sum_{k=1}^n \int_{0}^x t^{2k-2} \log^{m-1}\tt dt.
\end{align*}
Then, letting $x$ tends to $1$, we can get the following two recurrence relations
\begin{align*}
&\int_{0}^1 t^{2n-2}\log^m\tt dt=-\frac{2m!(-1)^{m}}{2n-1}\zeta(\overline {m})-\frac{2m}{2n-1} \sum_{k=1}^{n-1}\int_{0}^1 t^{2k-1}\log^{m-1}\tt dt,\\
&\int_{0}^1 t^{2n-1}\log^m\tt dt=-\frac{m}{n} \sum_{k=1}^n \int_{0}^1 t^{2k-2} \log^{m-1}\tt dt.
\end{align*}
Thus, from above recurrence formulas, we may deduce these desired evaluations.\end{proof}

It is possible that the closed form of integral $\int_{0}^1 t^{n-1} \log^p(t) \log^m \tt dt$ can be proved by
using the techniques of the Theorem \ref{thm-I}. Two examples as follow
\begin{align*}
\int_{0}^1 t^{2n-2} \log(t) \log^2 \tt dt&=-\frac{2}{2n-1}T_n(1,2)-\frac{2}{2n-1}T_n(2,1)-\frac{2}{(2n-1)^2}T_n(1,1)\\&\quad+\frac{3\zeta(2)}{2n-1}S_n(1)-\frac{2\zeta(2)}{(2n-1)^2}-\frac{7\zeta(3)}{2(2n-1)}+\frac{6\log(2)\zeta(2)}{2n-1},\\
\int_{0}^1 t^{2n-1} \log(t) \log^2 \tt dt&=-\frac{1}{n} S_n(1,2)-\frac{1}{n} S_n(2,1)-\frac{1}{2n^2} S_n(1,1)-\frac{2\log(2)}{n} T_n(2)\\&\quad+\frac{\zeta(2)}{2n} T_n(1)-\frac{\log(2)}{n^2} T_n(1)+\frac{7\zeta(3)}{4n}.
\end{align*}

\begin{re} A similar result of Theorem \ref{thm-I} can be found in \cite{Xu2017}:
\begin{align}\label{b5}
\int_0^1 x^{n-1}\log^m(1-x)dx=(-1)^mm!\frac{\zeta^\star_n(\{1\}_{m})}{n},
\end{align}
where $m\geq 0$ and $n\geq 1$ are positive integers.
\end{re}

Now, we prove four explicit formulas for Kaneko-Tsumura $\psi$-values via the convoluted $T$-values $T({\bfk_{r}}\circledast {\bfl_{s}})$. Recall from \cite{KTA2018,KTA2019} that the Kaneko-Tsumura $\psi$-function is defined by
\begin{align}\label{a14}
\psi(k_1,k_2\ldots,k_r;s):=\frac{1}{\Gamma(s)} \int\limits_{0}^\infty \frac{t^{s-1}}{\sinh(t)}{\rm A}({k_1,k_2,\ldots,k_r};\tanh(t/2))dt\quad (\Re(s)>0),
\end{align}
where
\begin{align}\label{a15}
&{\rm A}(k_1,k_2,\ldots,k_r;z): = 2^r\sum\limits_{1 \le {n_1} <  \cdots  < {n_r}\atop n_i\equiv i\ {\rm mod}\ 2} {\frac{{{z^{{n_r}}}}}{{n_1^{{k_1}}n_2^{{k_2}} \cdots n_r^{{k_r}}}}},\quad z \in \left[ { - 1,1} \right).
\end{align}
Obviously, the Kaneko-Tsumura $\psi$-function can be regarded as a new kind of Arakawa-Kaneko zeta function of level two. The Arakawa-Kaneko zeta function (see \cite{AM1999}) is defined by
\begin{align}
\xi(k_1,k_2\ldots,k_r;s):=\frac{1}{\Gamma(s)} \int\limits_{0}^\infty \frac{t^{s-1}}{e^t-1}{\rm Li}_{k_1,k_2,\ldots,k_r}(1-e^{-t})dt\quad (\Re(s)>0),
\end{align}
where ${\mathrm{Li}}_{{{k_1},{k_2}, \cdots ,{k_r}}}\left( z \right)$ is the single variable multiple polylogarithm function defined by
\begin{align}\label{a17}
&{\mathrm{Li}}_{{{k_1},{k_2}, \cdots ,{k_r}}}\left( z \right): = \sum\limits_{1 \le {n_1} <  \cdots  < {n_r}} {\frac{{{z^{{n_r}}}}}{{n_1^{{k_1}}n_2^{{k_2}} \cdots n_r^{{k_r}}}}},\quad z \in \left[ { - 1,1} \right).
\end{align}

\begin{thm}\label{thmb3} For positive integers $m$ and $p$,
\begin{align}
&\begin{aligned}
\psi(\bfk_{2m-1};2p)=2\sum_{j=0}^{p-1}{\bar \zeta}(2p-1-2j)T(\bfk_{2m-1}\circledast \{1\}_{2j+1})+T(\bfk_{2m-1}\circledast \{1\}_{2p}),\label{b6}
\end{aligned}\\
&\begin{aligned}
\psi(\bfk_{2m-1};2p+1)=2\sum_{j=0}^{p}{\bar \zeta}(2p-2j)T(\bfk_{2m-1}\circledast \{1\}_{2j+1}),\label{b7}
\end{aligned}\\
&\begin{aligned}
\psi(\bfk_{2m};2p)=2\sum_{j=0}^{p-1}{\bar \zeta}(2p-2-2j)T(\bfk_{2m}\circledast \{1\}_{2j+2}),\label{b8}
\end{aligned}\\
&\begin{aligned}
\psi(\bfk_{2m};2p+1)=2\sum_{j=0}^{p-1}{\bar \zeta}(2p-1-2j)T(\bfk_{2m}\circledast \{1\}_{2j+2})+T(\bfk_{2m}\circledast \{1\}_{2p+1}),\label{b9}
\end{aligned}
\end{align}
where ${\bar \zeta}(0)$ should be interpreted as $1/2$ wherever it occurs.
\end{thm}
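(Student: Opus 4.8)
The plan is to reduce all four formulas to a single term-by-term integration, which is then evaluated by Theorem~\ref{thm-I}.

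\textbf{Step 1.} In the defining integral \eqref{a14} for $\psi(\bfk;s)$ I would substitute $u=\tanh(t/2)$; then $t=\log\frac{1+u}{1-u}$, $\sinh(t)=\frac{2u}{1-u^2}$ and $dt=\frac{2\,du}{1-u^2}$, so that \eqref{a14} becomes
\[
\psi(\bfk;s)=\frac{1}{\Gamma(s)}\int_0^1\frac1u\Bigl(-\log\tfrac{1-u}{1+u}\Bigr)^{s-1}{\rm A}(\bfk;u)\,du .
\]

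\textbf{Step 2.} Using \eqref{a15} I would expand ${\rm A}(\bfk;u)$ as a power series in $u$. Since $n_r\equiv r\pmod 2$ in \eqref{a15}, only the powers $u^{2n-1}$ occur when the depth $r=2m-1$ is odd, and only $u^{2n}$ when $r=2m$ is even. Writing the inner indices as $n_{2j-1}=2a_{2j-1}-1$ and $n_{2j}=2a_{2j}$, the strict chain $n_1<n_2<\cdots$ together with the parity constraints collapses to exactly the alternating chain $0<a_1\le a_2<a_3\le a_4<\cdots$ that defines $D_{n,\cdot}$; comparing with \eqref{MOT}--\eqref{MET} yields
\[
{\rm A}(\bfk_{2m-1};u)=\sum_{n\ge1}\frac{2\,T_n(\bfk_{2m-2})}{(2n-1)^{k_{2m-1}}}\,u^{2n-1},\qquad
{\rm A}(\bfk_{2m};u)=\sum_{n\ge1}\frac{2\,T_n(\bfk_{2m-1})}{(2n)^{k_{2m}}}\,u^{2n}.
\]
All coefficients here are nonnegative and $\bigl(-\log\frac{1-u}{1+u}\bigr)^{s-1}\ge0$ on $(0,1)$, so after dividing by $u$ I may integrate term by term (Tonelli), the resulting series of integrals having finite sum $\Gamma(s)\psi(\bfk;s)$.

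\textbf{Step 3.} For odd depth one is left with the integrals $\int_0^1 u^{2n-2}\log^{s-1}\frac{1-u}{1+u}\,du$ and for even depth with $\int_0^1 u^{2n-1}\log^{s-1}\frac{1-u}{1+u}\,du$; I would substitute the member of \eqref{ee}--\eqref{oo} whose power of the logarithm has the same parity as $s-1$, so that $\Gamma(s)=(s-1)!$ cancels the factorial there and $(-1)^{s-1}$ absorbs the leading sign of \eqref{eo}/\eqref{oo}. Collecting the resulting terms and using that the factor $2$ carried by the coefficients of ${\rm A}$ is exactly the factor $2$ in the definitions of the convoluted $T$-values, the coefficient of each ${\bar\zeta}$ turns out to be one of $T(\bfk_{2m-1}\circledast\{1\}_{2j+1})$ or $T(\bfk_{2m}\circledast\{1\}_{2j+2})$, while in the cases $\psi(\bfk_{2m-1};2p)$ and $\psi(\bfk_{2m};2p+1)$ (which use \eqref{eo} and \eqref{oe}) the additional lone $S_n$-term is exactly $T(\bfk_{2m-1}\circledast\{1\}_{2p})$, resp.\ $T(\bfk_{2m}\circledast\{1\}_{2p+1})$ (in the remaining two cases this role is played by the endpoint term carrying $\bar\zeta(0)=\tfrac12$). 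Reading off the four parity combinations of $s$ and of the depth against \eqref{ee},\eqref{eo},\eqref{oe},\eqref{oo} then produces \eqref{b7},\eqref{b6},\eqref{b9},\eqref{b8}.

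The step I expect to be the main obstacle is Step 2: one must verify meticulously that the half-angle substitution sends the summation region of ${\rm A}$ onto precisely the region defining $T_n(\bfk_{2m-2})$, resp.\ $T_n(\bfk_{2m-1})$ — the interleaving of odd and even summation indices is exactly what generates the alternating $\le,<$ pattern of $D_{n,\cdot}$ — so that the power-series coefficients of ${\rm A}$ are literally the MTHSs entering the convoluted $T$-values. Given that identification, the rest is Theorem~\ref{thm-I} together with routine bookkeeping of signs, factorials and powers of $2$, and the interchange of summation and integration is immediate from positivity.
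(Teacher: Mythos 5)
Your proposal is correct and is essentially identical to the paper's own proof: the substitution $x=\tanh(t/2)$ yielding \eqref{b10}, the power-series expansions \eqref{b11}--\eqref{b12} of ${\rm A}$ in terms of the MTHSs, and then term-by-term application of Theorem~\ref{thm-I} with the bookkeeping you describe. (You have also silently corrected the paper's typo ``$\tanh(t)=x$'' to the intended $\tanh(t/2)=x$.)
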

\begin{proof} From the definition of Kaneko-Tsumura $\psi$-function \eqref{a14},
we set variables $\tanh(t)=x$ and $s=p+1$ to get
\begin{align}\label{b10}
\psi(k_1,k_2\ldots,k_r;p+1)=\frac{(-1)^p}{p!}\int\limits_{0}^1 \frac{\log^p\left(\frac{1-x}{1+x}\right){\rm A}(k_1,k_2,\ldots,k_r;x)}{x}dx.
\end{align}
Then, according to the definitions of ${\rm A}(k_1,k_2,\ldots,k_r;x)$ and MTHSs, by an elementary calculation, we can find that
\begin{align}\label{b11}
&{\rm A}(\bfk_{2m-1};x)=2\su \frac{T_n(\bfk_{2m-2})}{(2n-1)^{k_{2m-1}}}x^{2n-1}
\end{align}
and
\begin{align}\label{b12}
&{\rm A}(\bfk_{2m};x)=2\su \frac{T_n(\bfk_{2m-1})}{(2n)^{k_{2m}}}x^{2n}.
\end{align}
Hence, applying Theorem \ref{thm-I} and identities \eqref{b11} and \eqref{b12} to \eqref{b10}, we may easily deduce these desired results.\end{proof}

\begin{re} A similar evaluation of Theorem \ref{thmb3} for Arakawa-Kaneko zeta values can be found in \cite{Ku2010},
\begin{align*}
\xi(\bfk;p+1)=\su \frac{\zeta_{n-1}(k_1,\ldots,k_{r-1})\zeta^\star_n(\{1\}_{p})}{n^{k_r+1}}.
\end{align*}
\end{re}

\begin{cor}\label{corb4} For positive integers $p$,
\begin{align}
&\psi(k;2p)=2\sum_{j=0}^{p-1} {\bar \zeta}(2p-1-2j)T(\{1\}_{2j},k+1)+S(\{1\}_{2p-1},k+1),\label{b13}\\
&\psi(k;2p+1)=2\sum_{j=0}^p {\bar \zeta}(2p-2j)T(\{1\}_{2j},k+1).\label{b14}
\end{align}
\end{cor}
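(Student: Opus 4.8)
The plan is to derive this as the depth-one ($m=1$) specialization of Theorem~\ref{thmb3}. First I would set $m=1$ in \eqref{b6} and \eqref{b7}; since then $\bfk_{2m-1}=\bfk_1=(k)$, these read
\[
\psi(k;2p)=2\sum_{j=0}^{p-1}{\bar \zeta}(2p-1-2j)\,T\big((k)\circledast\{1\}_{2j+1}\big)+T\big((k)\circledast\{1\}_{2p}\big)
\]
and $\psi(k;2p+1)=2\sum_{j=0}^{p}{\bar \zeta}(2p-2j)\,T((k)\circledast\{1\}_{2j+1})$. Comparing with the asserted \eqref{b13} and \eqref{b14}, the corollary is reduced to the two identities $T((k)\circledast\{1\}_{2j+1})=T(\{1\}_{2j},k+1)$ and $T((k)\circledast\{1\}_{2p})=S(\{1\}_{2p-1},k+1)$.

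Next I would verify these two identities directly from the definitions. Since $(k)$ has odd depth $1$ and $\{1\}_{2j+1}$ has odd depth $2j+1$, the value $T((k)\circledast\{1\}_{2j+1})$ is an instance of $T(\bfk_{2m-1}\circledast\bfl_{2p-1})$ with $m=1$, $p=j+1$ (and $\bfl=\{1\}_{2j+1}$), so its defining series is $2\sum_{n\ge1}T_n(\emptyset)\,T_n(\{1\}_{2j})/(2n-1)^{k+1}$; using $T_n(\emptyset)=1$ this is $2\sum_{n\ge1}T_n(\{1\}_{2j})/(2n-1)^{k+1}$. On the other hand, the multiple $T$-harmonic sum recursion recalled just after the definition of MTHSs, applied with $(\{1\}_{2j},k+1)$ in the role of $\bfk_{2m-1}$, gives $T_N(\{1\}_{2j},k+1)=2\sum_{n=1}^{N}T_n(\{1\}_{2j})/(2n-1)^{k+1}$; letting $N\to\infty$ yields exactly the same series, so the first identity holds (for $j=0$ it reads $T(k+1)$, which is admissible as $k\ge1$). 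The second identity is obtained the same way: $T((k)\circledast\{1\}_{2p})$ is an instance of $T(\bfk_{2m-1}\circledast\bfl_{2p})$ with $m=1$, whose series is $2\sum_{n\ge1}T_n(\emptyset)\,S_n(\{1\}_{2p-1})/(2n-1)^{k+1}=2\sum_{n\ge1}S_n(\{1\}_{2p-1})/(2n-1)^{k+1}$, and the multiple $S$-harmonic sum recursion with $(\{1\}_{2p-1},k+1)$ in the role of $\bfk_{2m}$ identifies the $N\to\infty$ limit with $S(\{1\}_{2p-1},k+1)$. Substituting both identities back into the specialized \eqref{b6} and \eqref{b7} produces \eqref{b13} and \eqref{b14}.

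I do not expect any real obstacle: the argument is pure bookkeeping, and the only points that need attention are keeping the parity conventions of the $T$- and $S$-harmonic sums straight (which slot carries a factor $2n-1$ versus $2n$) and exploiting the collapse $T_n(\emptyset)=1$ that turns the convoluted values into single series. As a variant that sidesteps Theorem~\ref{thmb3}, one could instead take $r=1$ in \eqref{b11}, so that ${\rm A}(k;x)=2\sum_{n\ge1}x^{2n-1}/(2n-1)^k$, plug this into \eqref{b10} to get $\psi(k;p+1)=\frac{2(-1)^p}{p!}\sum_{n\ge1}(2n-1)^{-k}\int_0^1 x^{2n-2}\log^p\tt\,dx$, and then apply \eqref{ee} (for $p$ even) or \eqref{eo} (for $p$ odd) of Theorem~\ref{thm-I}, followed by an interchange of the two summations, to land directly on \eqref{b13} and \eqref{b14}.
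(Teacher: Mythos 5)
Your proposal is correct and follows the paper's own route exactly: the paper proves the corollary by setting $m=1$ and $k_1=k$ in \eqref{b6} and \eqref{b7}, and your verification that $T((k)\circledast\{1\}_{2j+1})=T(\{1\}_{2j},k+1)$ and $T((k)\circledast\{1\}_{2p})=S(\{1\}_{2p-1},k+1)$ (via $T_n(\emptyset)=1$ and the MTHS/MSHS recursions) just makes explicit the bookkeeping the paper leaves to the reader. The alternative you sketch via Theorem~\ref{thm-I} is essentially how Theorem~\ref{thmb3} itself is proved, so it is not a genuinely different argument.
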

\begin{proof} Corollary \ref{corb4} follows immediately from \eqref{b6} and \eqref{b7} by setting $m=1$ and $k_1=k$.\end{proof}

In \cite{KTA2018}, Kaneko and Tsumura proved the following formula ($k,r\geq 1$ and $p\geq 0$ are integers)
\begin{align}\label{b15}
\psi(\{1\}_{r-1},k;p+1)=\sum_{i_1+\cdots+i_k=p,\atop i_1,\ldots,i_k\geq 0} \binom{i_k+r}{r} T(i_1+1,\ldots,i_{k-1}+1,i_k+r+1).
\end{align}
Therefore, letting $r=1$ in \eqref{b15} and using \eqref{b13}-\eqref{b14}, we can find many relations between MTVs and MSVs. For example, setting $k=1$ in \eqref{b13} yields
\begin{align}\label{b16}
S(\{1\}_{2p-1},2)=2pT(2p+1)-2\sum_{j=0}^{p-1} {\bar \zeta}(2p-1-2j)T(2j+2),
\end{align}
where we used the duality relation of MTVs
$$T(\{1\}_{r-1},k+1)=T(\{1\}_{k-1},r+1).$$
Setting $p=2$ in \eqref{b16} yields
\[S(1,1,1,2)=-\frac{15}{4}\log(2)\zeta(4)-\frac{9}{4}\zeta(2)\zeta(3)+\frac{31}{4}\zeta(5).\]
In next section, we will prove that all Kaneko-Tsumura $\psi$-values can be expressed in terms of MTVs.

Similarly, according to the definition of multiple polylogarithm function \eqref{a17} and using \eqref{oe} and \eqref{oo}, we prove the following theorem.
\begin{thm} For a composition $\bfk=(k_1,\ldots,k_r)$ and positive integer $m$,
\begin{align}\label{b17}
\int_{0}^1 \frac{{\rm Li}_{\bfk}(x^2)\log^{2m}\xx}{x}dx&=(2m)!\sum_{j=0}^{m-1} {\bar \zeta}(2m-1-2j) \su \frac{\zeta_{n-1}(\bfk_{r-1})T_n(\{1\}_{2j+1})}{n^{k_r+1}}\nonumber\\&\quad +\frac{(2m)!}{2} \su \frac{\zeta_{n-1}(\bfk_{r-1})S_n(\{1\}_{2m})}{n^{k_r+1}}
\end{align}
and
\begin{align}\label{b18}
\int_{0}^1 \frac{{\rm Li}_{\bfk}(x^2)\log^{2m-1}\xx}{x}dx=-(2m-1)!\sum_{j=0}^{m-1} {\bar \zeta}(2m-2-2j) \su \frac{\zeta_{n-1}(\bfk_{r-1})T_n(\{1\}_{2j+1})}{n^{k_r+1}}.
\end{align}
\end{thm}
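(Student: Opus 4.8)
The plan is to deduce \eqref{b17} and \eqref{b18} from the single-variable evaluations \eqref{oe} and \eqref{oo} of Theorem~\ref{thm-I} by expanding ${\rm Li}_{\bfk}(x^2)$ into a power series and integrating term by term. First I would collect the defining sum \eqref{a17} according to its largest index $n_r=n$, which gives
\[
 {\rm Li}_{\bfk}(z)=\su \frac{\zeta_{n-1}(\bfk_{r-1})}{n^{k_r}}\,z^n ,
\]
and hence ${\rm Li}_{\bfk}(x^2)=\su \zeta_{n-1}(\bfk_{r-1})\,n^{-k_r}\,x^{2n}$. Substituting this expansion into the left-hand sides of \eqref{b17} and \eqref{b18} and interchanging summation and integration, each integral becomes
\[
 \int_{0}^1 \frac{{\rm Li}_{\bfk}(x^2)\log^{p}\xx}{x}\,dx
 =\su \frac{\zeta_{n-1}(\bfk_{r-1})}{n^{k_r}}\int_{0}^1 x^{2n-1}\log^{p}\xx\,dx ,
\]
with $p=2m$ in the first case and $p=2m-1$ in the second.

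Next I would apply \eqref{oe} to the inner integral when $p=2m$ and \eqref{oo} when $p=2m-1$. In each case the result is a finite $\R$-linear combination, with coefficients ${\bar\zeta}(\cdot)$ not depending on $n$, of terms $n^{-1}T_n(\{1\}_{2j+1})$ and $n^{-1}S_n(\{1\}_{2m})$; pulling the ${\bar\zeta}(\cdot)$ outside the $n$-summation and merging the factor $1/n$ (or $1/(2n)$) with $n^{-k_r}$ to produce $n^{-(k_r+1)}$ yields exactly the right-hand sides of \eqref{b17} and \eqref{b18}. In \eqref{b18} the index $j=m-1$ contributes the coefficient ${\bar\zeta}(0)$, to be interpreted as $1/2$ according to the convention stated in Theorem~\ref{thm-I}, just as there.

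The one step needing genuine justification is the termwise integration. Since $\log\xx<0$ on $(0,1)$, the integrand $x^{-1}{\rm Li}_{\bfk}(x^2)\log^{p}\xx$ has constant sign there, so by Tonelli it suffices to check that $\su n^{-k_r}\,|\zeta_{n-1}(\bfk_{r-1})|\int_0^1 x^{2n-1}|\log\xx|^{p}\,dx$ is finite. Substituting $u=(1-x)/(1+x)$ and using the elementary bound $(1-u)/(1+u)\le e^{-2u/(1+u)}$ gives $\int_0^1 x^{2n-1}|\log\xx|^{p}\,dx=O((\log n)^{p}/n)$; since $\zeta_{n-1}(\bfk_{r-1})$ grows at most like a power of $\log n$ and $k_r\ge 1$, the series converges. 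Near $x=0$ nothing need be checked, as ${\rm Li}_{\bfk}(x^2)=O(x^{2r})$. This argument also shows that the integrals on the left are finite even when $\bfk$ is not admissible, and it is the only obstacle; everything else is routine manipulation of the identities already established in Theorem~\ref{thm-I}.
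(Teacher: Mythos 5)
Your proposal is correct and follows essentially the same route as the paper, which simply expands ${\rm Li}_{\bfk}(x^2)$ via its defining series grouped by the largest index and applies \eqref{oe} and \eqref{oo} termwise. Your added justification of the interchange of sum and integral is a welcome detail the paper omits.
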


Note that if $\bfk=(\{1\}_r)$ then the integrals on the left-hand side on the \eqref{b17} and \eqref{b18} can be evaluated by alternating MZVs. Now, we get explicit formulas. Let
\[I(r,p):=\int_0^1 \frac{\log^r(1-x^2)\log^p\xx}{x}dx,\quad r,p\geq 1.\]
Change $x=(1-u)/(1+u)$ to give
\begin{align}\label{b19}
I(r,p)&=2\int_0^1 \frac{\log\left(\frac{4u}{(1+u)^2}\right)\log^p(u)}{1-u^2}du\nonumber\\
&=2\sum_{i+j+k=r,\atop i,j,k\geq 0} \frac{r!}{i!j!k!} \log^i(4)\log^j(u)(-\log(1+u)^2)^k \frac{\log^p(u)}{1-u^2}du\nonumber\\
&=\sum_{i+j+k=r,\atop i,j,k\geq 0} \frac{r!}{i!j!k!}  2^{k+i+1} (-1)^k \log^i(2) \int_0^1 \frac{\log^{p+j}(u)\log^k(1+u)}{1-u^2}du.
\end{align}
Applying the two well-known identities
\begin{align*}
\frac{\log^k(1+u)}{1-u}=(-1)^kk!\su \zeta_{n-1}(\{1\}_{k-1},{\bar 1})u^{n-1},\quad \zeta_{n-1}(\{1\}_{-1},{\bar 1}):=1,
\end{align*}
and
\begin{align*}
\frac{\log^k(1+u)}{1+u}=(-1)^kk!\su \zeta_{n-1}(\{1\}_{k})(-u)^{n-1}
\end{align*}
to \eqref{b19} yields
\begin{align}\label{b20}
I(r,p)=\sum_{i+j+k=r,\atop i,j,k\geq 0} \frac{r!(p+j)!}{i!j!}(-1)^{p+j} 2^{k+i} \log^i(2)\left(\zeta(\{1\}_{k-1},{\bar 1},p+j+1)-\zeta(\{1\}_{k},\overline{p+j+1}) \right),
\end{align}
where $\zeta(\{1\}_{-1},{\bar 1},p+j+1):=\zeta(p+j+1)$.

Hence, setting $\bfk=(\{1\}_r)$ in \eqref{b17} and \eqref{b18}, and using \eqref{b20}, we can obtain the following corollaries immediately.
\begin{cor} For positive integer $m$ and $r$,
\begin{align}\label{b21}
&2\sum_{j=0}^{m-1} {\bar \zeta}(2m-1-2j) \su \frac{\zeta_{n-1}(\{1\}_{r-1})T_n(\{1\}_{2j+1})}{n^{2}} +\su \frac{\zeta_{n-1}(\{1\}_{r-1})S_n(\{1\}_{2m})}{n^{2}}\nonumber\\
&=(-1)^r\sum_{i+j+k=r,\atop i,j,k\geq 0}(-1)^j 2^{k+i+1}\binom{2m+j}{j} \frac{\log^i(2)}{i!}\left(\zeta(\{1\}_{k-1},{\bar 1},2m+j+1)-\zeta(\{1\}_{k},\overline{2m+j+1}) \right).
\end{align}
\end{cor}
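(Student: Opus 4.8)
The plan is to specialize $\bfk=(\{1\}_r)$ in the identity \eqref{b17}, which has already been established, and then to evaluate the resulting left-hand side using the closed form \eqref{b20} for $I(r,p)$. The first step is to invoke the classical evaluation of the one-variable multiple polylogarithm at all-ones indices: writing ${\rm Li}_{\{1\}_r}(z)$ as the $r$-fold iterated integral of the one-form $dt/(1-t)$ over the simplex $0<t_1<\cdots<t_r<z$ and applying the shuffle formula for iterated integrals gives ${\rm Li}_{\{1\}_r}(z)=(-\log(1-z))^r/r!$, hence ${\rm Li}_{\{1\}_r}(x^2)=(-1)^r\log^r(1-x^2)/r!$. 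Since for $\bfk=(\{1\}_r)$ one has $k_r=1$, that is $k_r+1=2$, substituting into \eqref{b17} turns its left-hand side into $(-1)^r I(r,2m)/r!$ while its right-hand side becomes
\begin{equation*}
(2m)!\sum_{j=0}^{m-1}\bar\zeta(2m-1-2j)\su\frac{\zeta_{n-1}(\{1\}_{r-1})T_n(\{1\}_{2j+1})}{n^2}+\frac{(2m)!}{2}\su\frac{\zeta_{n-1}(\{1\}_{r-1})S_n(\{1\}_{2m})}{n^2}.
\end{equation*}

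Next I would apply \eqref{b20} with $p=2m$, noting that $(-1)^{p+j}=(-1)^j$ because $p=2m$ is even, to replace $I(r,2m)$ by the triple sum over $i+j+k=r$. Equating the two expressions just obtained, dividing through by $(2m)!$ so that the factor $(2m+j)!/((2m)!\,j!)$ collapses to $\binom{2m+j}{j}$, and then multiplying both sides by $2$ to clear the coefficient $(2m)!/2$ of the $S_n$-term --- which simultaneously turns the power $2^{k+i}$ into $2^{k+i+1}$ --- produces exactly \eqref{b21}. The companion corollary coming from \eqref{b18} is obtained by the same recipe, now with $p=2m-1$ and the identical polylogarithm reduction.

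There is no genuine obstacle here: once \eqref{b17} and \eqref{b20} are in hand the argument is pure bookkeeping, and the only place that demands care is the sign-and-power ledger --- the $(-1)^r$ entering through ${\rm Li}_{\{1\}_r}(x^2)$, the $(-1)^j$ coming out of \eqref{b20}, the merging of the factorials into $\binom{2m+j}{j}$, and the shift $2^{k+i}\mapsto 2^{k+i+1}$ forced by the final doubling. The one ingredient worth recording separately is the reduction ${\rm Li}_{\{1\}_r}(z)=(-\log(1-z))^r/r!$, since it is precisely this collapse of the all-ones polylogarithm to a logarithmic power that lets the left-hand side of \eqref{b17} be identified with the single integral $I(r,2m)$ whose value is given in \eqref{b20}.
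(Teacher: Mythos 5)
Your proposal is correct and follows exactly the route the paper intends: specialize $\bfk=(\{1\}_r)$ in \eqref{b17}, use ${\rm Li}_{\{1\}_r}(x^2)=(-1)^r\log^r(1-x^2)/r!$ to identify the left-hand side with $(-1)^rI(r,2m)/r!$, and substitute \eqref{b20} with $p=2m$; your sign and factorial bookkeeping (the $(-1)^{2m+j}=(-1)^j$, the collapse of $(2m+j)!/((2m)!\,j!)$ into $\binom{2m+j}{j}$, and the doubling $2^{k+i}\mapsto 2^{k+i+1}$) all check out.
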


\begin{cor} For positive integer $m$ and $r$,
\begin{align}\label{b22}
&2\sum_{j=0}^{m-1} {\bar \zeta}(2m-2-2j) \su \frac{\zeta_{n-1}(\{1\}_{r-1})T_n(\{1\}_{2j+1})}{n^{2}} \nonumber\\
&=(-1)^r\sum_{i+j+k=r,\atop i,j,k\geq 0}(-1)^j 2^{k+i}\binom{2m+j-1}{j} \frac{\log^i(2)}{i!}\left(\zeta(\{1\}_{k-1},{\bar 1},2m+j)-\zeta(\{1\}_{k},\overline{2m+j}) \right).
\end{align}
\end{cor}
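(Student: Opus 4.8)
My plan is to obtain this corollary exactly as the sentence preceding it suggests: by specializing the integral identity \eqref{b18} to $\bfk=(\{1\}_r)$ and then invoking the closed form \eqref{b20} for $I(r,p)$. First I would use the standard evaluation of the all-ones single-variable multiple polylogarithm,
\[
{\rm Li}_{\{1\}_r}(z)=\sum_{0<n_1<\cdots<n_r}\frac{z^{n_r}}{n_1\cdots n_r}=\frac{\big(-\log(1-z)\big)^r}{r!},
\]
which is immediate from its iterated-integral representation by induction on $r$. With $z=x^2$ this reads ${\rm Li}_{\{1\}_r}(x^2)=(-1)^r\log^r(1-x^2)/r!$, so the left-hand side of \eqref{b18} with $\bfk=(\{1\}_r)$ becomes $\big((-1)^r/r!\big)\,I(r,2m-1)$, which is precisely the integral that \eqref{b20} evaluates explicitly.

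Next I would specialize the right-hand side of \eqref{b18} to $\bfk=(\{1\}_r)$: since $k_r=1$, the denominator $n^{k_r+1}$ becomes $n^2$ and $\bfk_{r-1}=(\{1\}_{r-1})$, so the right-hand side equals $-(2m-1)!\sum_{j=0}^{m-1}{\bar\zeta}(2m-2-2j)\,\su \zeta_{n-1}(\{1\}_{r-1})T_n(\{1\}_{2j+1})/n^2$. Plugging \eqref{b20} with $p=2m-1$ into $\big((-1)^r/r!\big)I(r,2m-1)$ and equating the two sides, I would use $(-1)^{p+j}=(-1)^{2m-1+j}=-(-1)^j$ and $p+j+1=2m+j$; the $r!$ produced by ${\rm Li}_{\{1\}_r}$ cancels the $r!$ appearing in \eqref{b20}, so the coefficient $r!(p+j)!/(i!\,j!)$ turns into $(2m-1+j)!/(i!\,j!)$, and dividing through by the $(2m-1)!$ on the right of \eqref{b18} rewrites this as $\binom{2m+j-1}{j}/i!$. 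Gathering the surviving signs and powers of $2$ then yields the claimed identity. The companion formula \eqref{b21} comes out of \eqref{b17} by the identical argument, now with $p=2m$ (so $(-1)^{p+j}=(-1)^j$ and the exponents become $2m+j+1$) and with the extra term $\su \zeta_{n-1}(\{1\}_{r-1})S_n(\{1\}_{2m})/n^2$ accounting for the second summand on the right of \eqref{b17}.

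No individual step is deep; I expect the only real friction to come from the bookkeeping in the last paragraph, namely keeping straight the three independent sources of sign (the $(-1)^r$ from the polylogarithm, the $(-1)^{p+j}$ built into \eqref{b20}, and the overall minus of \eqref{b18}), the powers of $2$ (produced by $\log 4=2\log 2$ and by $\log\!\big((1+u)^2\big)=2\log(1+u)$ in the derivation of \eqref{b20}), and the exact shape of the binomial coefficient. Absolute convergence of all the series in sight --- which is what licenses the termwise use of \eqref{b20} and the interchange of $\su$ with the finite sum over $(i,j,k)$ --- is already established in the proof of \eqref{b18} and in the derivation of \eqref{b20}, so no new analytic input is required.
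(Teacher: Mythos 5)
Your method is exactly the paper's: the authors obtain \eqref{b22} by setting $\bfk=(\{1\}_r)$ in \eqref{b18}, using ${\rm Li}_{\{1\}_r}(x^2)=(-1)^r\log^r(1-x^2)/r!$ to identify the left-hand side with $\frac{(-1)^r}{r!}I(r,2m-1)$, and substituting \eqref{b20} with $p=2m-1$. All of the steps you outline are the intended ones and are sound.

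The one place where you stop short is the final bookkeeping, and it matters. Dividing \eqref{b18} by $-(2m-1)!$, as you propose, leaves the sum $\sum_{j=0}^{m-1}{\bar\zeta}(2m-2-2j)\,\su \zeta_{n-1}(\{1\}_{r-1})T_n(\{1\}_{2j+1})/n^{2}$ with coefficient $1$ on the left, while the right-hand side comes out exactly as printed, with $2^{k+i}$. So the computation you describe yields \eqref{b22} \emph{without} the leading factor $2$; as printed, the corollary is off by a factor of $2$. (A check at $m=r=1$: the printed left side equals $2\cdot\frac12\cdot 2T(1,2)=\frac72\zeta(3)$, whereas the three terms $(i,j,k)=(1,0,0),(0,1,0),(0,0,1)$ on the right sum to $\frac74\zeta(3)$. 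The paper's own specialization at $r=1$ immediately after the corollary is consistent with the corrected version, not with the printed one.) The stray $2$ was evidently carried over from \eqref{b21}, where it is genuine: there one must divide \eqref{b17} by $(2m)!/2$ to normalize the $S_n$-term, and that same division turns $2^{k+i}$ into the $2^{k+i+1}$ appearing on the right of \eqref{b21}; equation \eqref{b18} has no such term. Your proof is therefore correct in substance and identical in route to the paper's, but your closing assertion that gathering the signs and powers of two ``yields the claimed identity'' does not literally hold --- you should either correct the statement (drop the leading $2$, or replace $2^{k+i}$ by $2^{k+i+1}$) or record the discrepancy.
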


Therefore, from two corollaries above, we know that for positive integers $r$ and $m$, the series
\[\su \frac{\zeta_{n-1}(\{1\}_{r-1})T_n(\{1\}_{2m-1})}{n^{2}}\quad\text{and}\quad\su \frac{\zeta_{n-1}(\{1\}_{r-1})S_n(\{1\}_{2m})}{n^{2}}\]
can be evaluated in terms of alternating MZVs.
In particular, setting $r=1$ in \eqref{b21} and \eqref{b22} one obtains
\begin{align*}
S(\{1\}_{2m},2)&=2\zeta(1,\overline{2m+1})-2\log(2)T(2m+1)-2\zeta({\bar 1},2m+1) \\
&\quad+(2m+1)T(2m+2)-2\sum_{j=0}^{m-1}{\bar \zeta}(2m-1-2j)T(2j+3)
\end{align*}
and
\begin{align*}
\sum_{j=0}^{m-1} {\bar \zeta}(2m-2-2j)T(2j+3)=\zeta(1,\overline{2m})-\log(2)T(2m)-\zeta({\bar 1},2m)+mT(2m+1).
\end{align*}

Finally, we end this section by a result relating some convoluted values involving MTVs, MSVs and MZVs.
\begin{thm} For positive integers $k,m$ and $r$,
\begin{align}
&2\sum_{j=0}^{m-1} {\bar \zeta}(2m-1-2j) \su \frac{\zeta_{n-1}(\{1\}_{r-1})T_n(\{1\}_{2j+1})}{n^{k+1}} +\su \frac{\zeta_{n-1}(\{1\}_{r-1})S_n(\{1\}_{2m})}{n^{k+1}}\nonumber\\
&+(-1)^k \su \frac{T_n(\{1\}_{2m-1})\zeta^\star_n(\{1\}_r)}{n^{k+1}}=\sum_{j=1}^{k-1} (-1)^{j-1}2^j T(\{1\}_{2m-1},j+1)\zeta(\{1\}_{r-1},k+1-j).
\end{align}
\end{thm}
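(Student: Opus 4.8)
\emph{Proof strategy.} The plan is to recognise the sum of the first two terms on the left as a single integral, strip the right-hand side off it by $k-1$ successive integrations by parts, and identify the remaining integral with the third term on the left.

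First I would invoke \eqref{b17} for the composition $\bfk=(\{1\}_{r-1},k)$ (so that $\bfk_{r-1}=\{1\}_{r-1}$ and $k_r=k$): it gives that the sum of the first two summands on the left equals
\[
\frac{2}{(2m)!}\int_0^1\frac{{\rm Li}_{\{1\}_{r-1},k}(x^2)\log^{2m}\xx}{x}\,dx .
\]
Now ${\rm A}(1;x)=2\su\frac{x^{2n-1}}{2n-1}=-\log\xx$, and more generally ${\rm A}(\{1\}_\ell;x)=\frac{1}{\ell!}{\rm A}(1;x)^\ell$ (elementary; e.g.\ iterate $\frac{d}{dx}{\rm A}(\{1\}_\ell;x)=\frac{2}{1-x^2}{\rm A}(\{1\}_{\ell-1};x)$), so that ${\rm A}(\{1\}_{2m};x)=\frac{1}{(2m)!}\log^{2m}\xx$. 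Hence the displayed integral equals $2J_0$, where for $0\le i\le k-1$ I set
\[
J_i:=\int_0^1\frac{{\rm A}(\{1\}_{2m-1},i+1;x)\,{\rm Li}_{\{1\}_{r-1},k-i}(x^2)}{x}\,dx ,
\]
with the conventions ${\rm A}(\{1\}_{2m-1},1;x)={\rm A}(\{1\}_{2m};x)$ and ${\rm Li}_{\{1\}_{r-1},1}(x^2)={\rm Li}_{\{1\}_r}(x^2)$.

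Next I would integrate by parts in $J_i$. Integrating \eqref{b12} term by term gives $\int_0^x t^{-1}{\rm A}(\{1\}_{2m-1},j;t)\,dt={\rm A}(\{1\}_{2m-1},j+1;x)$ for $j\ge 1$, and differentiating ${\rm Li}_{\{1\}_{r-1},j}(x^2)=\su\frac{\zeta_{n-1}(\{1\}_{r-1})}{n^j}x^{2n}$ gives $\frac{d}{dx}{\rm Li}_{\{1\}_{r-1},j}(x^2)=\frac{2}{x}{\rm Li}_{\{1\}_{r-1},j-1}(x^2)$ for $j\ge 2$. Choosing the antiderivative of $x^{-1}{\rm A}(\{1\}_{2m-1},i+1;x)$ that vanishes at $0$, I obtain for $0\le i\le k-2$
\[
J_i=T(\{1\}_{2m-1},i+2)\,\zeta(\{1\}_{r-1},k-i)-2J_{i+1},
\]
the boundary term at $x=0$ vanishing and the one at $x=1$ being finite precisely because $i+2\ge 2$ and $k-i\ge 2$. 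Iterating this recursion yields
\[
J_0=\sum_{i=0}^{k-2}(-2)^i\,T(\{1\}_{2m-1},i+2)\,\zeta(\{1\}_{r-1},k-i)+(-2)^{k-1}J_{k-1} .
\]

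Finally I would evaluate $J_{k-1}=\int_0^1 x^{-1}{\rm A}(\{1\}_{2m-1},k;x)\,{\rm Li}_{\{1\}_r}(x^2)\,dx$ directly: expanding ${\rm A}(\{1\}_{2m-1},k;x)=2\su\frac{T_n(\{1\}_{2m-1})}{(2n)^k}x^{2n}$ via \eqref{b12}, substituting $y=x^2$, and using ${\rm Li}_{\{1\}_r}(y)=\frac{(-\log(1-y))^r}{r!}$ together with \eqref{b5} in the shape $\int_0^1 y^{n-1}{\rm Li}_{\{1\}_r}(y)\,dy=\frac{\zeta^\star_n(\{1\}_r)}{n}$, I get $J_{k-1}=\frac{1}{2^k}\su\frac{T_n(\{1\}_{2m-1})\zeta^\star_n(\{1\}_r)}{n^{k+1}}$. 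Combining the last three displays, multiplying by $2$, and re-indexing $j=i+1$, the sum of the first two left-hand terms becomes
\[
\sum_{j=1}^{k-1}(-1)^{j-1}2^j\,T(\{1\}_{2m-1},j+1)\,\zeta(\{1\}_{r-1},k+1-j)+(-1)^{k-1}\su\frac{T_n(\{1\}_{2m-1})\zeta^\star_n(\{1\}_r)}{n^{k+1}} ,
\]
so that adding the third left-hand term $(-1)^{k}\su\frac{T_n(\{1\}_{2m-1})\zeta^\star_n(\{1\}_r)}{n^{k+1}}$ kills the last summand and leaves exactly the asserted right-hand side. The step I expect to be the main obstacle is precisely this convergence bookkeeping: the integrated-out terms are finite only while $k-i\ge 2$, which forces the recursion to stop after $k-1$ steps, and the genuinely boundary-divergent piece $J_{k-1}$ (where ${\rm Li}_{\{1\}_r}(x^2)$ blows up as $x\to 1$) must be handled by the direct series computation rather than by one more integration by parts; the degenerate case $k=1$, with an empty sum on the right, reduces at once to this last step.
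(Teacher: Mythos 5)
Your argument is correct and is essentially the paper's own proof: both start from \eqref{b17} with $\bfk=(\{1\}_{r-1},k)$, peel off the products $T(\{1\}_{2m-1},j+1)\zeta(\{1\}_{r-1},k+1-j)$ by $k-1$ integrations by parts that shift weight from the ${\rm Li}$ factor to the ${\rm A}$ factor, and evaluate the terminal integral $\int_0^1 x^{-1}{\rm A}(\{1\}_{2m-1},k;x)\,{\rm Li}_{\{1\}_r}(x^2)\,dx$ by series expansion together with \eqref{b5}. Your explicit recursion for $J_i$ and the convergence bookkeeping merely spell out the step the paper compresses into "using integration by parts, by a direct calculation."
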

\begin{proof} On the one hand, setting $\bfk=(\{1\}_{r-1},k)$ in \eqref{b17} yields
\begin{align*}
&\int_0^1 \frac{{\rm Li}_{\{1\}_{r-1},k}(x^2)\log^{2m}\xx}{x}dx\\
&=(2m)!\sum_{j=0}^{m-1} {\bar \zeta}(2m-1-2j) \su \frac{\zeta_{n-1}(\{1\}_{r-1})T_n(\{1\}_{2j+1})}{n^{k+1}} +\frac{(2m)!}{2}\su \frac{\zeta_{n-1}(\{1\}_{r-1})S_n(\{1\}_{2m})}{n^{k+1}}.
\end{align*}
On the other hand, according to definitions, we have
\begin{align*}
(-1)^rr!{\rm Li}_{\{1\}_{r-1},k}(x^2)=2^{k-1} \int\limits_{0<t_1<\cdots<t_{k-1}<x} \frac{\log^r(1-t_1^2)}{t_1\cdots t_{k-1}}dt_1\cdots t_{k-1}
\end{align*}
and
\begin{align*}
(-1)^rr!{\rm A}(\{1\}_{r-1},k;x)= \int\limits_{0<t_1<\cdots<t_{k-1}<x} \frac{\log^r\left(\frac{1-t_1}{1+t_1}\right)}{t_1\cdots t_{k-1}}dt_1\cdots t_{k-1}.
\end{align*}
Hence, using integration by parts, by a direct calculation, we deduce
\begin{align*}
&\int_0^1 \frac{{\rm Li}_{\{1\}_{r-1},k}(x^2)\log^{2m}\xx}{x}dx\\
&= (2m)!\sum_{j=1}^{k-1} (-1)^{j-1}2^{j-1} T(\{1\}_{2m-1},j+1)\zeta(\{1\}_{r-1},k+1-j)\\
&\quad+\frac{(-1)^{k+r-1}}{r!} 2^{k-1} (2m)! \int_0^1 \frac{\log^r(1-x^2){\rm A}(\{1\}_{2m-1},k;x)}{x}dx\\
&=(2m)!\sum_{j=1}^{k-1} (-1)^{j-1}2^{j-1} T(\{1\}_{2m-1},j+1)\zeta(\{1\}_{r-1},k+1-j)\\
&\quad+\frac{(-1)^{k+r-1}}{r!} 2^{k} (2m)! \su \frac{T_n(\{1\}_{2m-1})}{(2n)^k} \int_0^1 x^{2n-1}\log^r(1-x^2)dx\\
&=(2m)!\sum_{j=1}^{k-1} (-1)^{j-1}2^{j-1} T(\{1\}_{2m-1},j+1)\zeta(\{1\}_{r-1},k+1-j)\\
&\quad -(-1)^k \frac{(2m)!}{2} \su \frac{T_n(\{1\}_{2m-1})\zeta^\star_n(\{1\}_r)}{n^{k+1}},
\end{align*}
where we used the \eqref{b5}. Thus, we complete this proof.\end{proof}

Clearly, if $k=1$ then
\begin{align}
\su \frac{T_n(\{1\}_{2m-1})\zeta^\star_n(\{1\}_r)}{n^{2}}=\frac{2(-1)^r}{r!(2m)!}I(r,2m).
\end{align}
if $r=1$ then
\begin{align}
&2^{k+1}\sum_{j=0}^{m-1} {\bar \zeta}(2m-1-2j)T(\{1\}_{2j+1},k+1)+2^kS(\{1\}_{2m},k+1)+(-1)^k \su \frac{T_n(\{1\}_{2m-1})H_n}{n^{k+1}}\nonumber\\
&=\sum_{j=1}^{k-1} (-1)^{j-1}2^{j} T(\{1\}_{2m-1},j+1)\zeta(k+1-j),
\end{align}
where $H_n:=\zeta_n(1)$ is classical harmonic number.

\section{Multiple Associated Integrals and 2-Labeled Posets}\label{sec:poset}
\begin{defn}
A \emph{$2$-poset} is a pair $(X,\delta_X)$, where $X=(X,\leq)$ is
a finite partially ordered set and $\delta_X$ is a map from $X$ to $\{0,1\}$.
We often omit  $\delta_X$ and simply say ``a 2-poset $X$.''
The $\delta_X$ is called the \emph{label map} of $X$.

A 2-poset $(X,\delta_X)$ is called \emph{admissible} if
$\delta_X(x)=0$ for all maximal elements $x\in X$ and
$\delta_X(x)=1$ for all minimal elements $x\in X$.
\end{defn}

\begin{defn}
For an admissible 2-poset $X$, we define the associated integral
\begin{equation}\label{4.1}
I(X)=\int_{\Delta_X}\prod_{x\in X}\om_{\delta_X(x)}(t_x),
\end{equation}
where
\[\Delta_X=\bigl\{(t_x)_x\in [0,1]^X \bigm| t_x<t_y \text{ if } x<y\bigr\}\]
and
\[\omz(t)=\frac{dt}{t}, \quad \omn(t)=\frac{2dt}{1-t^2}. \]
\end{defn}

Note that in \cite{KY2018,KO2018,Y2014}, $\omn(t)=\frac{dt}{1-t}$. For the empty 2-poset, denoted $\emptyset$, we put $I(\emptyset):=1$.

\begin{pro}\label{prop:shuffl2poset}
For non-comparable elements $a$ and $b$ of a $2$-poset $X$, $X^b_a$ denotes the $2$-poset that is obtained from $X$ by adjoining the relation $a<b$. If $X$ is an admissible $2$-poset, then the $2$-poset $X^b_a$ and $X^a_b$ are admissible and
\begin{equation}\label{4.2}
I(X)=I(X^b_a)+I(X^a_b).
\end{equation}
\end{pro}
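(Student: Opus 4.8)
The statement is the standard "shuffle decomposition" for Yamamoto's associated integrals, and the plan is to mimic the classical argument for iterated integrals, adapted to the 2-poset setting. First I would reduce to the case where $a$ and $b$ are the only pair of non-comparable elements whose incomparability we are resolving, by observing that the claimed identity concerns only the relative position of $a$ and $b$; all other order relations in $X$, $X^b_a$, and $X^a_b$ are the same. The key point is a decomposition of the domain of integration: writing $\Delta_X$, $\Delta_{X^b_a}$, $\Delta_{X^a_b}$ as subsets of $[0,1]^X$ cut out by the order relations, one has, up to a measure-zero set (the locus $t_a=t_b$),
\[
\Delta_X = \Delta_{X^b_a} \sqcup \Delta_{X^a_b},
\]
since on $\Delta_X$ either $t_a<t_b$ or $t_a>t_b$. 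Because the integrand $\prod_{x\in X}\om_{\delta_X(x)}(t_x)$ is the same differential form in all three integrals (the label map $\delta_X$ is unchanged by adjoining relations), integrating over the two pieces and adding gives \eqref{4.2}.

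The one genuine thing to check is admissibility of $X^b_a$ and $X^a_b$, so that the integrals $I(X^b_a)$ and $I(X^a_b)$ are defined at all. Here I would argue that adjoining a relation $a<b$ can only shrink the set of minimal elements and shrink the set of maximal elements: a minimal element of $X^b_a$ is minimal in $X$, and a maximal element of $X^b_a$ is maximal in $X$. Since $X$ is admissible, every minimal element of $X$ has label $1$ and every maximal element has label $0$; hence the same holds a fortiori for the (sub)sets of minimal and maximal elements of $X^b_a$, and likewise for $X^a_b$. Thus both are admissible. One should also note the convergence of the associated integral for admissible $2$-posets, which is exactly the condition guaranteeing that the form $\omz$ never appears at a minimal vertex (where $t_x\to 0$) and $\omn$ never appears at a maximal vertex (where $t_x\to 1$) — but this is part of the setup of Definition for $I(X)$ and may be taken as given.

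The main obstacle, such as it is, is purely bookkeeping: making the domain decomposition $\Delta_X = \Delta_{X^b_a}\sqcup\Delta_{X^a_b}$ precise when $X$ has many other comparabilities, i.e.\ checking that no other constraint is affected and that the "diagonal" $\{t_a=t_b\}\cap\Delta_X$ is genuinely Lebesgue-null in $\Delta_X$ (it has codimension one). Once that is granted, \eqref{4.2} is just additivity of the integral over a disjoint union, exactly as in Chen's original shuffle-product argument for iterated integrals; I would therefore present the proof in two or three lines and refer the reader to \cite{Y2014} for the routine details.
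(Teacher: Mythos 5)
Your proof is correct. The paper itself states this proposition without proof (it is recalled from Yamamoto's work \cite{Y2014}), and your argument is exactly the standard one: the identity $\Delta_{X^b_a}=\Delta_X\cap\{t_a<t_b\}$ (and likewise with $a,b$ swapped) together with the nullity of the hyperplane $\{t_a=t_b\}$ gives the decomposition, and the observation that minimal (resp.\ maximal) elements of $X^b_a$ and $X^a_b$ form subsets of those of $X$ settles admissibility. Nothing is missing.
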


Note that the admissibility of a 2-poset corresponds to
the convergence of the associated integral. We use Hasse diagrams to indicate 2-posets, with vertices $\circ$ and $\bullet$ corresponding to $\delta(x)=0$ and $\delta(x)=1$, respectively.  For example, the diagram
\[\begin{xy}
{(0,-4) \ar @{{*}-o} (4,0)},
{(4,0) \ar @{-{*}} (8,-4)},
{(8,-4) \ar @{-o} (12,0)},
{(12,0) \ar @{-o} (16,4)}
\end{xy} \]
represents the 2-poset $X=\{x_1,x_2,x_3,x_4,x_5\}$ with order
$x_1<x_2>x_3<x_4<x_5$ and label
$(\delta_X(x_1),\ldots,\delta_X(x_5))=(1,0,1,0,0)$.
This 2-poset is admissible.
To describe the corresponding diagram, we introduce an abbreviation:
For a sequence $\bfk=(k_1,\ldots,k_r)$ of positive integers,
we write
\[\begin{xy}
{(0,-3) \ar @{{*}.o} (0,3)},
{(1,-3) \ar @/_1mm/ @{-} _\bfk (1,3)}
\end{xy}\]
for the vertical diagram
\[\begin{xy}
{(0,-24) \ar @{{*}-o} (0,-20)},
{(0,-20) \ar @{.o} (0,-14)},
{(0,-14) \ar @{-} (0,-10)},
{(0,-10) \ar @{.} (0,-4)},
{(0,-4) \ar @{-{*}} (0,0)},
{(0,0) \ar @{-o} (0,4)},
{(0,4) \ar @{.o} (0,10)},
{(0,10) \ar @{-{*}} (0,14)},
{(0,14) \ar @{-o} (0,18)},
{(0,18) \ar @{.o} (0,24)},
{(1,-24) \ar @/_1mm/ @{-} _{k_1} (1,-14)},
{(4,-3) \ar @{.} (4,-11)},
{(1,0) \ar @/_1mm/ @{-} _{k_{r-1}} (1,10)},
{(1,14) \ar @/_1mm/ @{-} _{k_r} (1,24)}
\end{xy}.\]

Then, by the definition of the multiple $T$-values $T(\bfk)$, we have
\begin{align*}
&T(\bfk)=\int\limits_{0}^1 \underbrace{\frac{dt}{t}\cdots\frac{dt}{t}}_{k_r-1}\frac{2dt}{1-t^2}
\underbrace{\frac{dt}{t}\cdots\frac{dt}{t}}_{k_{r-1}-1}\frac{2dt}{1-t^2}\cdots
\underbrace{\frac{dt}{t}\cdots\frac{dt}{t}}_{k_1-1}\frac{2dt}{1-t^2}.
\end{align*}
Hence, using this notation of associated integral, one can verify that

\begin{equation}\label{4.3}
T(\bfk)=I\left(\ \begin{xy}
{(0,-3) \ar @{{*}.o} (0,3)},
{(1,-3) \ar @/_1mm/ @{-} _\bfk (1,3)}
\end{xy}\right).
\end{equation}
Similarly, using the fact that if $x=\tanh(t/2)$ then $dx/x=dt/\sinh(t)$ and $2dx/(1-x^2)=dt$ we deduce from \cite[Lemma 5.1]{KTA2018} that
\begin{equation*}
\psi(\bfk;p+1)=\frac{1}{p!}\,I\left(\xybox{
{(0,-9) \ar @{{*}-o} (0,-4)},
{(0,-4) \ar @{.o} (0,4)},
{(0,4) \ar @{-o} (10,9)},
{(10,9) \ar @{-{*}} (6,-5)},
{(10,9) \ar @{-{*}} (16,-5)},
{(8,-5) \ar @{.} (14,-5)},
{(-1,-9) \ar @/^1mm/ @{-} ^\bfk (-1,4)},
{(6,-6) \ar @/_1mm/ @{-} _{p} (16,-6)},
}\ \right)=I\left(\xybox{
{(0,-9) \ar @{{*}-o} (0,-4)},
{(0,-4) \ar @{.o} (0,4)},
{(0,4) \ar @{-o} (5,9)},
{(10,-9) \ar @{{*}-{*}} (10,-4)},
{(10,-4) \ar @{.{*}} (10,4)},
{(10,4) \ar @{-} (5,9)},
{(-1,-9) \ar @/^1mm/ @{-} ^\bfk (-1,4)},
{(11,-9) \ar @/_1mm/ @{-} _{p} (11,4)},
}\ \right)
\end{equation*}
since there are exactly $p!$ ways to impose
a total order on the $p$ black vertices.
By Prop.~\ref{prop:shuffl2poset} this implies the result that $\psi(k_1,\ldots,k_r;p+1)$ can be expressed as a finite sum of MTVs.

For a composition $\bfk=(k_1,\ldots,k_r)\in \N^r$, put $\bfk_{+}=(k_1,\ldots,k_{r-1},k_r+1)$. Recall that the usual dual of an admissible composition $\bfk$, denoted by $\bfk^*$, is defined as follows:
for all $a_1,\dots,a_n,b_1,\dots,b_n\in\N$,
\begin{equation*}
\big(\{1\}_{a_1-1},b_1+1,\ldots,\{1\}_{a_n-1},b_n+1\big)^*=
\big(\{1\}_{b_n-1},a_n+1,\ldots,\{1\}_{b_1-1},a_1+1\big)
\end{equation*}
For $\bfj=(j_1,\ldots,j_r)\in \N^r_0\ (\N_0:=\N\cup \{0\})$, we set $|\bfj|=j_1+\cdots+j_r$ and call it the weight of $\bfj$, and $\dep(\bfj)=r$, the depth of $\bfj$.  For two such compositions $\bfj$  and $\bfk$ of the same depth, we denote by $\bfj+\bfk$ the composition obtained by the component-wise addition, $\bfj+\bfk=(j_1+k_1,\ldots,j_r+k_r)$, and by $b(\bfk;\bfj)$ the quantity given by
\[b(\bfk;\bfj):=\prod\limits_{i=1}^r \binom{k_i+j_i-1}{j_i}.\]
In \cite{KT2018}, Kaneko and Tsumura obtained explicit expressions in terms of multiple zeta/zeta-star values for $\xi(\bfk;p+1)$ and $\eta(\bfk;p+1)$ as follows.
\begin{thm} (\cite{KT2018} Theorem 2.5) For any composition $\bfk=(k_1,\ldots,k_r)$ and any $p\in\N\setminus \{1\}$, we have
\begin{equation}\label{4.6}
\xi(\bfk;p+1)=\sum_{|\bfj|=p,\ \dep(\bfj)=n} b\left((\bfk_{+})^*;\bfj\right)\zeta\left((\bfk_{+})^*+\bfj\right)
\end{equation}
and
\begin{equation}\label{4.7}
\eta(\bfk;p+1)=(-1)^{r-1}\sum_{|\bfj|=p,\ \dep(\bfj)=n} b\left((\bfk_{+})^*;\bfj\right)\zeta^\star\left((\bfk_{+})^*+\bfj\right),
\end{equation}
where both sums are over all $\bfj\in \N^r_0$ of weight $p$ and depth $n:=\dep(\bfk^*_{+})=|\bfk|+1-\dep(\bfk).$
\end{thm}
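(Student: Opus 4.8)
The plan is to imitate, at level one, the computation the excerpt has just carried out for $\psi(\bfk;p+1)$, in three steps: $(1)$ rewrite $\xi(\bfk;p+1)$ as an integral of a product of one--variable polylogarithms; $(2)$ recognise that integral as Yamamoto's associated integral $I(X)$ of an admissible $2$-poset $X$, built now from the classical $1$-forms $\frac{dt}{t}$ and $\frac{dt}{1-t}$; $(3)$ expand $I(X)$ into ordinary multiple zeta values by iterating the $2$-poset shuffle relation of Yamamoto \cite{Y2014}, the MZV analogue of Prop.~\ref{prop:shuffl2poset}. For step $(1)$: substituting $u=1-e^{-t}$ in the defining integral of $\xi$, so that $t=-\log(1-u)$, $dt=\frac{du}{1-u}$ and $\frac{1}{e^{t}-1}=\frac{1-u}{u}$, and using the classical identity ${\rm Li}_{\{1\}_{p}}(u)=\frac{(-\log(1-u))^{p}}{p!}$, one obtains
\[
\xi(\bfk;p+1)=\int_{0}^{1}\frac{du}{u}\,{\rm Li}_{\{1\}_{p}}(u)\,{\rm Li}_{\bfk}(u),
\]
with ${\rm Li}_{\bfk}$ as in \eqref{a17}; the same substitution in the companion integral defining $\eta$ gives the analogous identity with ${\rm Li}_{\bfk}$ replaced by the star polylogarithm ${\rm Li}^{\star}_{\bfk}$.

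For step $(2)$, I would read this last display as $I(X)$, where $X$ has a unique maximal vertex $v_{0}$ of label $0$ (contributed by $\frac{du}{u}$), under which hang two mutually incomparable chains: the ``caterpillar'' $A$ whose associated iterated integral is ${\rm Li}_{\bfk}$, and a chain $B$ of $p$ vertices all of label $1$, coming from ${\rm Li}_{\{1\}_{p}}$. One checks $X$ is admissible. Now apply the classical duality $t\mapsto 1-t$, which reverses the order of $X$ and interchanges the labels $0$ and $1$. The sub-poset $\{v_{0}\}\cup A$ is exactly the caterpillar representing $\zeta(\bfk_{+})$ --- note $\bfk_{+}$ is admissible, its last entry being $k_{r}+1\ge 2$ --- so after the flip it becomes the caterpillar of the dual composition $(\bfk_{+})^{*}$, with $v_{0}$ now as its unique minimal vertex, while $B$ becomes a chain $B'$ of $p$ vertices of label $0$ lying entirely above $v_{0}$ and incomparable to the remaining vertices of the caterpillar. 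Writing $X'$ for this dual poset, $\xi(\bfk;p+1)=I(X)=I(X')$.

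For step $(3)$, I would linearise $X'$ by iterating the $2$-poset shuffle relation. A linear extension of $X'$ is the same as an order-preserving interleaving of the $p$-vertex chain $B'$ into the already totally ordered caterpillar of $(\bfk_{+})^{*}=(c_{1},\dots,c_{n})$, the only constraint being that every vertex of $B'$ stays above $v_{0}$. Since $B'$ and the $\omn$-runs of the caterpillar are each linearly ordered, the number $j_{i}$ of vertices of $B'$ falling in the $i$-th run satisfies $j_{1}+\cdots+j_{n}=p$; for a fixed $\bfj=(j_{1},\dots,j_{n})$ the assignment of which vertices of $B'$ go to which run is forced, while inside the $i$-th run the $j_{i}$ new vertices may be shuffled with the $c_{i}-1$ old ones in $\binom{c_{i}+j_{i}-1}{j_{i}}$ ways, and $I$ of every resulting totally ordered poset equals $\zeta((\bfk_{+})^{*}+\bfj)$ (admissible, since $c_{n}\ge 2$). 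Summing these multiplicities, whose product over $i$ is $b((\bfk_{+})^{*};\bfj)$, over all $\bfj$ of weight $p$ and depth $n$ gives \eqref{4.6}. Identity \eqref{4.7} follows from the same scheme applied to the $\eta$-integral, the star structure of ${\rm Li}^{\star}_{\bfk}$ being responsible for promoting the $\zeta$'s to $\zeta^{\star}$'s and for the overall sign $(-1)^{r-1}$.

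The step I expect to be the main obstacle is the combinatorial bookkeeping in $(3)$: verifying that the number of linear extensions of $X'$ producing a prescribed $\zeta((\bfk_{+})^{*}+\bfj)$ is exactly $b((\bfk_{+})^{*};\bfj)=\prod_{i}\binom{c_{i}+j_{i}-1}{j_{i}}$, including the degenerate runs ($c_{i}=1$) and insertions at the very top of the caterpillar, and --- for \eqref{4.7} --- keeping the signs straight in the passage between $\zeta$ and $\zeta^{\star}$. A minor additional point is convergence when $\bfk$ is non-admissible ($k_{r}=1$): one has to be sure that $\xi(\bfk;p+1)=I(X)$ and the shuffle expansion are literally valid, which causes no trouble since $\bfk_{+}$, hence $(\bfk_{+})^{*}$ and each $(\bfk_{+})^{*}+\bfj$, is admissible.
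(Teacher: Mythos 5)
This theorem is not proved in the paper at all: it is quoted verbatim from Kaneko--Tsumura \cite{KT2018}, and the paper merely remarks that Kawasaki and Ohno \cite{KO2018} gave an alternative proof via associated integrals of $2$-posets. Your argument for \eqref{4.6} is, in substance, exactly that Kawasaki--Ohno proof (and the same method the paper invokes, and omits, for its own level-two analogue, Theorem \ref{thmc3}): the substitution $u=1-e^{-t}$ turns $\xi(\bfk;p+1)$ into $\int_0^1\frac{du}{u}\,\mathrm{Li}_{\{1\}_p}(u)\,\mathrm{Li}_{\bfk}(u)$, this is $I(X)$ for the admissible $2$-poset with the $p$-chain of black vertices and the caterpillar of $\bfk$ hanging under one white vertex, the involution $t\mapsto 1-t$ replaces the caterpillar of $\bfk_{+}$ by that of $(\bfk_{+})^{*}$, and counting linear extensions via the shuffle relation (the level-one version of Prop.~\ref{prop:shuffl2poset}) yields $\prod_i\binom{c_i+j_i-1}{j_i}=b((\bfk_{+})^{*};\bfj)$ linear extensions contributing $\zeta((\bfk_{+})^{*}+\bfj)$. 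That bookkeeping does close up as you expect, including insertions above the top run and the degenerate runs $c_i=1$, and admissibility of $(\bfk_{+})^{*}+\bfj$ is automatic since duals of admissible compositions are admissible. So for \eqref{4.6} your proposal is correct and is the standard route.

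The genuine gap is in \eqref{4.7}. The assertion that ``the same substitution in the companion integral defining $\eta$ gives the analogous identity with $\mathrm{Li}_{\bfk}$ replaced by $\mathrm{Li}^{\star}_{\bfk}$'' does not hold as stated: the $\eta$-function is built from $\mathrm{Li}_{\bfk}(1-e^{t})$ with $\frac{1}{1-e^{t}}$ in the kernel, so the natural substitution $u=1-e^{t}$ maps $(0,\infty)$ onto $(-\infty,0)$ and produces a polylogarithm evaluated at a \emph{negative} argument, not a star polylogarithm on $(0,1)$. Converting $\mathrm{Li}_{\bfk}\bigl(\tfrac{z}{z-1}\bigr)$ (equivalently, $\mathrm{Li}_{\bfk}$ at negative argument) into an alternating sum of $\mathrm{Li}^{\star}$'s requires a Landen-type connection formula, and it is precisely this step that produces both the star in $\zeta^{\star}\bigl((\bfk_{+})^{*}+\bfj\bigr)$ and the prefactor $(-1)^{r-1}$. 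As written, your proposal treats this as routine sign-chasing inside the $2$-poset expansion, but without the connection formula there is no $2$-poset whose associated integral represents $\eta(\bfk;p+1)$ to feed into step $(3)$; this is the missing ingredient that must be supplied (it is Lemma-level input in \cite{KO2018}) before the rest of your scheme applies.
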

Previously, Kawasaki and Ohno \cite{KO2018} gave alternative proofs of \eqref{4.6} and \eqref{4.7} by the associated integral representations of $\xi(\bfk;p+1)$ and $\eta(\bfk;p+1)$ (see  \cite[Theorem 2.2]{KO2018}). By a similar argument as in the proof of Theorem 2.2 in \cite{KO2018}, we can prove the following theorem.
\begin{thm} \label{thmc3} For any composition $\bfk=(k_1,\ldots,k_r)$ and any $p\in\N\setminus \{1\}$, we have
\begin{equation}\label{4.}
\psi(\bfk;p+1)=\sum_{|\bfj|=p,\ \dep(\bfj)=n} b\left((\bfk_{+})^*;\bfj\right)T\left((\bfk_{+})^*+\bfj\right).
\end{equation}
where both sums are over all $\bfj\in \N^r_0$ of weight $p$ and depth $n:=\dep(\bfk^*_{+})=|\bfk|+1-\dep(\bfk).$
\end{thm}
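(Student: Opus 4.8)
The plan is to mimic the associated-integral argument that Kawasaki and Ohno used for $\xi(\bfk;p+1)$ and $\eta(\bfk;p+1)$ in \cite[Theorem 2.2]{KO2018}, transplanting it to the level-two setting by systematically replacing their $1$-form $\frac{dt}{1-t}$ with $\omn(t)=\frac{2dt}{1-t^2}$. The starting point is the associated-integral representation of $\psi(\bfk;p+1)$ already exhibited in this section, namely the ``fan'' $2$-poset whose spine is the vertical diagram attached to $\bfk$ and which has $p$ black vertices hanging off the top maximal vertex; by Prop.~\ref{prop:shuffl2poset} the integral over this poset equals $p!$ times $\psi(\bfk;p+1)$, and it also equals a sum of MTVs corresponding to the $p!$ total orders one can impose on the black vertices.

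First I would rewrite $\psi(\bfk;p+1)$ as $I(Y)$ for the $2$-poset $Y$ drawn in this section (the second diagram, with the $p$ black vertices placed as a \emph{descending} chain below a maximal vertex). The key combinatorial step is then to evaluate $I(Y)$ in a second way: one transforms $Y$ by repeatedly applying Prop.~\ref{prop:shuffl2poset} to move the chain of $p$ black vertices past the top segment of the $\bfk$-spine. This is exactly the manipulation in \cite{KO2018}; the outcome is naturally indexed by the dual composition. Concretely, writing $\bfk_{+}^{*}$ for the dual of $\bfk_{+}$ (which has depth $n=|\bfk|+1-\dep(\bfk)$), the insertion of $p$ extra $\circ$-vertices among the $n$ ``blocks'' of $\bfk_{+}^{*}$ produces, for each weak composition $\bfj\in\N_0^r$ with $|\bfj|=p$ and $\dep(\bfj)=n$, a $2$-poset whose associated integral is $T\big((\bfk_{+})^{*}+\bfj\big)$, with multiplicity $\prod_{i}\binom{k_i+j_i-1}{j_i}=b\big((\bfk_{+})^{*};\bfj\big)$ counting the number of orderings of the inserted vertices within each block. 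Matching the two evaluations of $I(Y)$ gives \eqref{4.}.

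The step I expect to be the main obstacle is verifying that the combinatorial bookkeeping of the level-one argument survives the replacement $\frac{dt}{1-t}\rightsquigarrow \omn$ — that is, checking that $\omn$ plays exactly the structural role of $\frac{dt}{1-t}$ in the poset manipulations. This is true because Prop.~\ref{prop:shuffl2poset} holds verbatim with $\omn(t)=\frac{2dt}{1-t^2}$ (the proof of that proposition only uses the shuffle identity for iterated integrals and never the specific shape of the integrand), and because the associated integral of the vertical $\bfk$-diagram is $T(\bfk)$ by \eqref{4.3}, just as the $\frac{dt}{1-t}$-version gives $\zeta(\bfk)$. The one genuinely new point to confirm is that the duality used to re-index — i.e. that reading the $\bfk_{+}$-spine ``from the top'' after inserting the black chain yields the blocks of $(\bfk_{+})^{*}$ — is a purely order-theoretic statement about Hasse diagrams and is therefore insensitive to the labels $0/1$ being attached to $\omz$ versus $\omn$. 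Once these compatibility checks are in place, the rest is the same binomial count as in \cite{KO2018}, and the proof concludes by invoking Prop.~\ref{prop:shuffl2poset} finitely many times and collecting terms.
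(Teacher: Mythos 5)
Your proposal is correct and follows exactly the route the paper intends: the paper's own ``proof'' simply states that the argument is completely similar to that of \cite[Theorem 2.2]{KO2018} and omits the details, and what you have written is a faithful expansion of that omitted argument --- starting from the $2$-poset representation of $\psi(\bfk;p+1)$ displayed in Section \ref{sec:poset}, applying Prop.~\ref{prop:shuffl2poset} to resolve the incomparable black vertices, and reading off the dual composition with the binomial multiplicities $b\left((\bfk_{+})^*;\bfj\right)$. Your observation that Prop.~\ref{prop:shuffl2poset} and the order-theoretic duality are insensitive to the replacement $\frac{dt}{1-t}\rightsquigarrow\omn$ is precisely the compatibility check that justifies the paper's one-line deferral.
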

\begin{proof}
The proof is completely similar as the proof of \cite[Theorem 2.2]{KO2018}) and is thus omitted.
\end{proof}

It is clear that formula \eqref{b15} is an immediate corollary of Theorem \ref{thmc3}. Theorems \ref{thmb3} and \ref{thmc3} now yield the following corollary.
\begin{cor}\label{cor:convolutedT}
For positive integer $p$ and composition $\bfk$, the convoluted $T$-value $T(\bfk\circledast\{1\}_p)$ can be expressed as a linear combination of products of {\rm MTVs} and alternating single zeta values with $\Z$-coefficients.
\end{cor}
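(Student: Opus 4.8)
The plan is to invert the triangular linear system of Theorem~\ref{thmb3}, which expresses the Kaneko-Tsumura $\psi$-values in terms of the convoluted $T$-values, and then to substitute Theorem~\ref{thmc3}, which writes every $\psi$-value as an $\N_0$-linear combination of MTVs. To set things up I would fix a composition $\bfk$, put $r:=\dep(\bfk)$, and let $\mathcal{M}$ be the $\Z$-subalgebra of $\R$ generated by all MTVs together with all alternating single zeta values $\zeta(\overline m)$, $m\ge1$; thus $\mathcal{M}$ contains every MTV, is closed under multiplication by any $\zeta(\overline m)$, and consists exactly of the $\Z$-linear combinations of finite products of MTVs and alternating single zeta values. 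Since $\bar\zeta(m)=-\zeta(\overline m)$, the corollary is precisely the assertion $T(\bfk\circledast\{1\}_p)\in\mathcal{M}$ for every $p\ge1$.

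The first step is bookkeeping. Reading the four formulas \eqref{b6}--\eqref{b9} with the convention $\bar\zeta(0)=1/2$, one checks that each of them, according to the parities of $r$ and of $p$, takes the single uniform shape
\[
\psi(\bfk;p)=T(\bfk\circledast\{1\}_p)+2\sum_{\substack{0<p'<p\\ p'\equiv r\,(\mathrm{mod}\,2)}}\bar\zeta(p-p')\,T(\bfk\circledast\{1\}_{p'}).
\]
The points to verify here are: the coefficient of the leading term $T(\bfk\circledast\{1\}_p)$ is exactly $1$ (this is where $2\bar\zeta(0)=1$ enters); every remaining index $p'$ satisfies $p'<p$ and $p'\equiv r\pmod 2$; and every argument $p-p'$ is positive. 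Solving this identity for its leading term then expresses $T(\bfk\circledast\{1\}_p)$ in terms of $\psi(\bfk;p)$ and of the $T(\bfk\circledast\{1\}_{p'})$ with $p'<p$; for $p=1$ the sum is empty and the identity degenerates to $T(\bfk\circledast\{1\}_1)=\psi(\bfk;1)=T(\bfk_+)$.

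The second step is a strong induction on $p$. The base case is $T(\bfk_+)$, an MTV, hence in $\mathcal{M}$. For the inductive step, Theorem~\ref{thmc3} places $\psi(\bfk;p)$ in the $\Z$-span of MTVs, so $\psi(\bfk;p)\in\mathcal{M}$; every $p'$ in the sum is $<p$, so $T(\bfk\circledast\{1\}_{p'})\in\mathcal{M}$ by the inductive hypothesis; and $\bar\zeta(p-p')=-\zeta(\overline{p-p'})$ is, up to sign, an alternating single zeta value, so each summand $2\bar\zeta(p-p')\,T(\bfk\circledast\{1\}_{p'})$ again lies in $\mathcal{M}$ with an integer coefficient. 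Hence $T(\bfk\circledast\{1\}_p)\in\mathcal{M}$. Tracking the coefficients through the recursion shows that the only constants that ever arise are the integers $\pm2$, the nonnegative binomials $b((\bfk_+)^*;\bfj)$ coming from Theorem~\ref{thmc3}, and alternating single zeta factors, which confirms the $\Z$-coefficients claimed.

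The one genuinely delicate point I anticipate is the value $\psi(\bfk;2)$, needed to launch the recursion at $p=2$: Theorem~\ref{thmc3} is stated only for $\psi(\bfk;p+1)$ with $p\ne1$, so it does not literally cover the argument $2$. I would handle this either by remarking that the associated-integral (2-poset) computation behind Theorem~\ref{thmc3} applies verbatim to the argument-$2$ case as well, or by simply invoking that every Kaneko-Tsumura $\psi$-value lies in the $\Q$-span of MTVs, together with the evident integrality of the binomial coefficients. Beyond this caveat and the parity bookkeeping of the first step, the argument is entirely formal, so I do not expect a serious obstacle.
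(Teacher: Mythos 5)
Your proposal is correct and follows essentially the same route as the paper, whose proof of this corollary is simply the observation that Theorem~\ref{thmb3} gives a unitriangular system expressing the $\psi$-values in terms of the $T(\bfk\circledast\{1\}_{p'})$ (thanks to $2\bar\zeta(0)=1$), which one inverts and combines with Theorem~\ref{thmc3}; your parity bookkeeping and strong induction on $p$ are exactly the details the paper leaves implicit. Your flagged caveat about $\psi(\bfk;2)$ falling outside the literal hypothesis $p\in\N\setminus\{1\}$ of Theorem~\ref{thmc3} is a fair observation that the paper glosses over, and your proposed fix (the 2-poset argument applies verbatim, as the validity of \eqref{b15} for all $p\ge0$ already indicates) is the right one.
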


We record several examples to illustrate Corollary \ref{cor:convolutedT} .
\begin{align*}
T(\bfk_{2m}\circledast \{1\}_{3})&\,=\psi(\bfk_{2m};3)-\log(2)\psi(\bfk_{2m};2),\\
T(\bfk_{2m-1}\circledast \{1\}_{3})&\,=\psi(\bfk_{2m-1};3)-2{\bar \zeta}(2)T((\bfk_{2m-1})_+),\\
T(\bfk_{2m}\circledast \{1\}_{4})&\,=\psi(\bfk_{2m};4)-{\bar \zeta}(2)\psi(\bfk_{2m};2),\\
T(\bfk_{2m-1}\circledast \{1\}_{4})&\,=\psi(\bfk_{2m-1};4)-2{\bar \zeta}(3)T((\bfk_{2m-1})_+)\\
& -2\log(2)\psi(\bfk_{2m-1};3)+4\log(2){\bar \zeta}(2)T((\bfk_{2m-1})_+).
\end{align*}

\begin{thm} For any positive integers $l_1,l_2$ and composition $\bfk_{m}\in \N^m$,
\begin{align*}
T(\bfk_{m}\circledast (l_1,l_2))=f(\bfk_m,l_1,l_2)+
I\left(
\raisebox{12pt}{\begin{xy}
{(-3,-18) \ar @{{*}-} (0,-15)},
{(0,-15) \ar @{{o}.} (3,-12)},
{(3,-12) \ar @{{o}.} (9,-6)},
{(9,-6) \ar @{{*}-} (12,-3)},
{(12,-3) \ar @{{o}.} (15,0)},
{(15,0) \ar @{{o}-} (18,3)},
{(18,3) \ar @{{o}-} (21,6)},
{(21,6) \ar @{{o}.} (24,9)},
{(24,9) \ar @{{o}-} (27,3)},
{(27,3) \ar @{{*}-} (30,6)},
{(30,6) \ar @{{o}.} (33,9)},
{(33,9) \ar @{{o}-} },
{(-3,-17) \ar @/^1mm/ @{-}^{k_1} (2,-12)},
{(9,-5) \ar @/^1mm/ @{-}^{k_{m}} (14,0)},
{(18,4) \ar @/^1mm/ @{-}^{l_2} (23,9)},
{(28,3) \ar @/_1mm/ @{-}_{l_{1}} (33,8)}.
\end{xy}}
\right)
\end{align*}
where $f(\bfk_m,l_1,l_2)=0$ if $m$ is even and $f(\bfk_m,l_1,l_2)=2\bar\zeta(l_1)T(k_1,\dots,k_{m-1},k_{m}+l_2)$ if $m$ is odd.
\end{thm}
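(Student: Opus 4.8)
The plan is to peel the identity off the series definition of the convoluted $T$-value and then recognise the right–hand side inside the 2-poset integral calculus of Section~\ref{sec:poset}. Write $\bfk'=(k_1,\dots,k_{m-1},k_m+l_2)$ and set $c_n=2n$ when $m$ is even, $c_n=2n-1$ when $m$ is odd. According to the parity of $m$, the definition gives
\[
T(\bfk_m\circledast(l_1,l_2))=2\su\frac{T_n(\bfk_{m-1})\,B_n}{c_n^{\,k_m+l_2}},\qquad
B_n=\begin{cases}T_n(l_1),& 2\mid m,\\[1mm] S_n(l_1),& 2\nmid m.\end{cases}
\]
The first step is to insert the elementary integral representations
\[
T_n(l_1)=\frac{2}{(l_1-1)!}\int_0^1\Bigl(\log\tfrac1u\Bigr)^{l_1-1}\frac{1-u^{2n}}{1-u^2}\,du,\qquad
S_n(l_1)=\frac{2}{(l_1-1)!}\int_0^1\Bigl(\log\tfrac1u\Bigr)^{l_1-1}\frac{u-u^{2n-1}}{1-u^2}\,du,
\]
to interchange sum and integral, and then to use \eqref{b11}--\eqref{b12} together with the recursions among the $T_n$'s to identify $2\su T_n(\bfk_{m-1})c_n^{-(k_m+l_2)}=T(\bfk')$ and $2\su T_n(\bfk_{m-1})c_n^{-(k_m+l_2)}u^{c_n}={\rm A}(\bfk';u)$. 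This produces the clean shape
\[
T(\bfk_m\circledast(l_1,l_2))=c\,T(\bfk')-\frac{2}{(l_1-1)!}\int_0^1\frac{(\log\frac1u)^{l_1-1}}{1-u^2}\,{\rm A}(\bfk';u)\,du ,
\]
with $c=T(l_1)$ for $m$ even and $c=S(l_1)$ for $m$ odd.

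The heart of the argument is to show that the $2$-labeled poset $X$ drawn in the statement is admissible and that its associated integral equals
\[
I(X)=T(l_1)\,T(\bfk')-\frac{2}{(l_1-1)!}\int_0^1\frac{(\log\frac1u)^{l_1-1}}{1-u^2}\,{\rm A}(\bfk';u)\,du .
\]
I would unwind $I(X)$ by Fubini over $\Delta_X$ (equivalently by repeated use of Prop.~\ref{prop:shuffl2poset} on the non-comparable pairs). The subchain from the bottom of $X$ up to the white vertex $v$ where the two branches meet is exactly the chain representing $\bfk'$ with its top $\omz$ deleted, so integrating that $\omz$ from the variable $t$ of the minimal vertex of the $l_1$-branch up to $1$ turns it into the function $T(\bfk')-{\rm A}(\bfk';t)$; the $l_1$-branch itself contributes the minimal form $\omn$ followed by $l_1-1$ copies of $\omz$ running freely up to $1$, i.e. the factor $\tfrac{2}{(l_1-1)!}(1-t^2)^{-1}(\log\tfrac1t)^{l_1-1}$. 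Carrying out the one-dimensional integral and using $\tfrac{2}{(l_1-1)!}\int_0^1(1-t^2)^{-1}(\log\tfrac1t)^{l_1-1}\,dt=T(l_1)$ gives the displayed value of $I(X)$.

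Subtracting the two displays leaves $T(\bfk_m\circledast(l_1,l_2))-I(X)=(c-T(l_1))\,T(\bfk')$, which is $0$ when $m$ is even and equals $\bigl(S(l_1)-T(l_1)\bigr)T(\bfk')$ when $m$ is odd; since $T(l_1)-S(l_1)=2{\bar\zeta}(l_1)$ this residual is precisely the boundary term $f(\bfk_m,l_1,l_2)$, which proves the theorem. When $l_1=1$ all steps are read through the shuffle/stuffle regularization of Section~\ref{sec:MMV} (Theorem~\ref{thm:RDSoverR}), the divergent parts of $T(l_1)$ and of the $u$-integral cancelling. I expect the only real obstacle to be Step~2: correctly translating the poset $X$ into the one-variable integral above — which $\omz$/$\omn$ forms sit at the interface of the two branches, which vertex carries the ``common'' index, and whether the $l_1$-branch is weakly or strictly below it — and pinning down the exact normalization and sign of $f$. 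This is also where the parity-dependence of $f$ originates: the minimal vertex of the $l_1$-branch is $\omn$ (odd denominators), forcing the factor $T(l_1)$, whereas the genuine factor $B_n$ has odd denominators only for $m$ even, so the even and odd cases differ by the single-zeta quantity $T(l_1)-S(l_1)=2{\bar\zeta}(l_1)$.
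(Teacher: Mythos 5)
Your proposal is correct and is essentially the paper's own argument run in the opposite direction: the paper unwinds $I(X)$ termwise via integration by parts, $\int_0^1 r x^r f(x)\,\frac{dx}{x}=f(1)-\int_0^1 x^r f'(x)\,dx$, to land on the series definition of the convoluted $T$-value, while you insert integral representations of $T_n(l_1)$ and $S_n(l_1)$ into that series and match against $I(X)$ computed by Fubini --- both routes pass through the same one-variable integral $\frac{2}{(l_1-1)!}\int_0^1 (1-u^2)^{-1}\log^{l_1-1}(1/u)\,\bigl[\,\cdot\,-{\rm A}(\bfk';u)\bigr]\,du$ and the same evaluation of the $l_1$-branch. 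One caveat: your residual for $m$ odd is $(S(l_1)-T(l_1))\,T(\bfk')=-2{\bar\zeta}(l_1)T(\bfk')$, which agrees with what the paper's own computation actually yields but is opposite in sign to the $f$ displayed in the theorem statement, so the final ``is precisely $f$'' holds only up to this (apparently typographical) sign; your handling of $l_1=1$ is also cleaner if you keep the convergent combination $T(\bfk')-{\rm A}(\bfk';u)$ (resp.\ $uT(\bfk')-{\rm A}(\bfk';u)$) rather than splitting off the divergent $T(1)$.
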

\begin{proof} Let $j\in\N$. Then
\begin{equation*}
I:=I\left(
\raisebox{12pt}{\begin{xy}
{(-3,-18) \ar @{{*}-} (0,-15)},
{(0,-15) \ar @{{o}.} (3,-12)},
{(3,-12) \ar @{{o}.} (9,-6)},
{(9,-6) \ar @{{*}-} (12,-3)},
{(12,-3) \ar @{{o}.} (15,0)},
{(15,0) \ar @{{o}-} (18,3)},
{(18,3) \ar @{{o}-} (21,6)},
{(21,6) \ar @{{o}.} (24,9)},
{(24,9) \ar @{{o}-} (27,3)},
{(27,3) \ar @{{*}-} (30,6)},
{(30,6) \ar @{{o}.} (33,9)},
{(33,9) \ar @{{o}-} },
{(-3,-17) \ar @/^1mm/ @{-}^{k_1} (2,-12)},
{(9,-5) \ar @/^1mm/ @{-}^{k_{j}} (14,0)},
{(18,4) \ar @/^1mm/ @{-}^{l_2} (23,9)},
{(28,3) \ar @/_1mm/ @{-}_{l_{1}} (33,8)}.
\end{xy}}
\right)
=\int_0^1   f(x)  \left(\int_0^x \omz^{l_2-1} \omz^{k_j-1}\omn \cdots \omz^{k_1-1}\omn \right) \frac{dx}{x}
\end{equation*}
where
\begin{equation*}
    f(x)= \int_{0<t_1<x,\  t_1<t_2<\dots<t_{l_1}<1} \omz(t_{l_1})\cdots  \omz(t_2)  \omn(t_1)=\int_0^x \frac{\log^{l_1-1}(t)}{(l_1-1)!}\frac{2dt}{1-t^2}.
\end{equation*}
Easy computation yields that
\begin{align*}
&\, \int_0^x \omz^{l_2-1} \omz^{k_j-1}\omn \cdots \omz^{k_1-1}\omn \\
=&\, \sum_{0<n_1<\dots<n_j<n} \frac{ (1-(-1)^{n_1} )(1+(-1)^{n_2} ) \cdots (1-(-1)^{j+n_{j-1}} ) }{n_1^{k_1}n_2^{k_2} \cdots n_{j-1}^{k_{j-1}} } \frac{ x^n(1+(-1)^{j+n})}{n^{k_j+l_2-1} }\\
=&\, \left\{
       \begin{array}{ll}
        \sum_{n>0} \frac{2T_n(k_1,\dots,k_{2m-1}) }{(2n)^{k_{2m}+l_2} }(2n) x^{2n}, & \hbox{if $j=2m$;} \\
         \sum_{n>0} \frac{2T_n(k_1,\dots,k_{2m-2})}{(2n-1)^{k_{2m-1}+l_2} }(2n-1) x^{2n-1}, \qquad \ & \hbox{if $j=2m-1$.}
       \end{array}
     \right.
\end{align*}
Assume $l_1>1$ first. Then using integration by parts we get
\begin{align*}
  \int_0^1  r x^r f(x) \frac{dx}{x} =&\,  \int_0^1 f(x)\, d(x^r)=f(1) -\int_0^1 x^r f'(x) \, dx \\
=&\, T(l_1)-\sum_{m>r} \frac{(1+(-1)^{m+r})}{m^{l_1}}
=
\left\{
       \begin{array}{ll}
       T_n(l_1) , & \hbox{if $r=2n$;} \\
       2\bar\zeta(l_1)+S_n(l_1) , \qquad  \ & \hbox{if $r=2n-1$.}
       \end{array}
     \right.
\end{align*}
One checks easily this still holds if $l_1=1$. Indeed, we may compute $I$ by the following method. We have
\begin{equation*}
I=\int_0^1 \left(\int_t^1  \left(\int_0^x \omz^{l_2-1} \omz^{k_j-1}\omn \cdots \omz^{k_1-1}\omn \right) \frac{dx}{x}\right)\frac{2dt}{1-t^2}.
\end{equation*}
By the same computation as in the case $l_1>1$, we only need to compute
\begin{equation*}
A_r=\int_0^1 \left(\int_t^1 r x^r \frac{dx}{x}\right)\frac{2dt}{1-t^2}= \int_0^1 \frac{2(1-t^r)}{1-t^2} \,dt.
\end{equation*}
If $r=2n$, by geometric series we see immediately that
 \begin{equation*}
 A_{2n}= \sum_{i=1}^{n} \frac{2}{2i-1}=T_n(1).
 \end{equation*}
If $r=2n-1$ then
\begin{equation*}
 A_{2n-1}= \int_0^1  \frac{2(1-t)}{1-t^2}\, dt + \int_0^1  \frac{2(t-t^{2n-1})}{1-t^2}\,dt
=2\log(2)+ \sum_{i=1}^{n-1} 2\int_0^1  t^{2i-1} \, dt=\log(2)+ S_n(1).
\end{equation*}
We have completed the proof of the theorem.
\end{proof}

\begin{re} The above theorem does not seem to generalize to arbitrary $T(\bfk\circledast\bfl)$. In the database associated with the paper \cite{BlumleinBrVe2010}, Blumlein et al. constructed explicit conjectural basis for the $\Q$-vector space generated by Euler sums up to weight 12. In particular, there are 89 basis elements for the weight 10 piece since 89 is the conjectural dimension of that subspace.  Using their database we find by Maple that both  $T((2, 1, 1)\circledast (1, 1, 3, 1))$
and $T( (2, 1, 1)\circledast (1, 2, 2, 1))$ need the basis element $\zeta(\bar 5)\zeta(1,1,\bar 3)$. However, none of the
products of MTVs with single alternating MZVs involves this elements.
\end{re}

\section{Duality for Kaneko-Tsumura $\psi$-Values}\label{sec:psi-V}
In this section we give explicit expressions of $T(\bfk\circledast\{1\}_p)$ by Kaneko-Tsumura $\psi$-values and alternating zeta values, and find some duality relations for Kaneko-Tsumura $\psi$-Values.  We need the following lemma.
\begin{lem}\label{lemd1}  Let $A_{p,q}, B_p, C_p\ (p,q\in \N)$ be any complex sequences. If
\begin{align}\label{e1}
\sum\limits_{j=1}^p A_{j,p}B_j=C_p,\quad A_{p,p}:=1,
\end{align}
holds, then
\begin{align}\label{e2}
B_p=\sum\limits_{j=1}^p C_j \sum\limits_{k=1}^{p-j} (-1)^k \left\{\sum\limits_{i_0<i_1<\cdots<i_{k-1}<i_k,\atop i_0=j,i_k=p} \prod\limits_{l=1}^k A_{i_{l-1},i_l}\right\},
\end{align}
where $\sum\limits_{k=1}^0 (\cdot):=1$.
\end{lem}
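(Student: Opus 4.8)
The plan is to read \eqref{e1} as a unitriangular linear system in the unknowns $B_1,B_2,\dots$ and to prove \eqref{e2} by verifying that its right-hand side solves that system; the verification will reduce to a simple telescoping of a signed sum over chains.

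First I would rewrite \eqref{e1} as $B_p=C_p-\sum_{j=1}^{p-1}A_{j,p}B_j$. Since $A_{p,p}=1$, this recursion shows that $(B_p)_{p\ge1}$ is \emph{uniquely} determined by $(C_p)_{p\ge1}$; hence it is enough to prove that the right-hand side of \eqref{e2}, which I will denote $\widetilde B_p$, satisfies $\sum_{j=1}^p A_{j,p}\widetilde B_j=C_p$ for all $p$. To organize the computation, for $i\le j$ set $\mathcal C_k(i,j):=\sum\prod_{l=1}^{k}A_{i_{l-1},i_l}$, the sum running over all chains $i=i_0<i_1<\dots<i_k=j$ (so $\mathcal C_0(i,j)=\delta_{i,j}$ and $\mathcal C_k(i,j)=0$ once $k>j-i$), and put $\Phi(i,j):=\sum_{k\ge0}(-1)^k\mathcal C_k(i,j)$. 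With this notation the convention $\sum_{k=1}^{0}(\cdot):=1$ appearing in \eqref{e2} is precisely the identity $\Phi(i,i)=1$, and \eqref{e2} says $\widetilde B_j=\sum_{i=1}^{j}C_i\,\Phi(i,j)$.

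Next I would interchange the order of summation to get $\sum_{j=1}^{p}A_{j,p}\widetilde B_j=\sum_{i=1}^{p}C_i\bigl(\sum_{j=i}^{p}A_{j,p}\Phi(i,j)\bigr)$, so the whole lemma comes down to the identity $\sum_{j=i}^{p}A_{j,p}\Phi(i,j)=\delta_{i,p}$. To prove it, isolate the top term $A_{p,p}\Phi(i,p)=\Phi(i,p)=\sum_{k\ge0}(-1)^k\mathcal C_k(i,p)$, and observe that for each $j<p$, attaching the factor $A_{j,p}$ to a chain counted by $\mathcal C_k(i,j)$ yields exactly a chain from $i$ to $p$ of length $k+1$ whose penultimate vertex is $j$; summing over $j$ gives $\sum_{j=i}^{p-1}A_{j,p}\Phi(i,j)=\sum_{k\ge0}(-1)^{k}\mathcal C_{k+1}(i,p)=\sum_{k\ge1}(-1)^{k-1}\mathcal C_{k}(i,p)$. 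Adding the two contributions, every term with $k\ge1$ cancels and only $\mathcal C_0(i,p)=\delta_{i,p}$ remains, which is the desired identity; combined with the uniqueness in the previous step this proves \eqref{e2}.

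The only step that is not purely formal is this chain-bookkeeping: one must check carefully that appending $A_{j,p}$ raises the chain length by one and hence flips the sign from $(-1)^k$ to $(-1)^{k-1}$, and that the boundary case $k=0$ (a single-vertex chain $i=j$, contributing the factor $A_{i,p}$) is accounted for correctly under the stated convention. Everything else is routine. Conceptually the argument is nothing but the expansion $(I+N)^{-1}=\sum_{k\ge0}(-N)^k$ for the strictly lower-triangular, hence nilpotent, matrix $N$ with entries $N_{p,j}=A_{j,p}$ for $j<p$; I would nonetheless give the elementary telescoping version above so as to keep the lemma self-contained.
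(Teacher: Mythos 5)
Your proof is correct. Where the paper proves \eqref{e2} by induction on $p$ --- substituting the inductive formula for $B_1,\dots,B_p$ into the recursion $B_{p+1}=C_{p+1}-\sum_{j\le p}A_{j,p+1}B_j$, interchanging sums, and re-assembling the chains of length $k$ ending at $l\le p$ together with the appended link $A_{l,p+1}$ into chains of length $k+1$ ending at $p+1$ --- you run the computation in the opposite direction: you observe that the unitriangular system determines $B$ uniquely, define $\widetilde B_p$ by the right-hand side of \eqref{e2}, and verify $\sum_{j}A_{j,p}\widetilde B_j=C_p$ via the telescoping identity $\sum_{j=i}^{p}A_{j,p}\Phi(i,j)=\delta_{i,p}$. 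The combinatorial core (appending one link to a chain raises $k$ by one and flips the sign, with the $k=0$ chain $\mathcal C_0(i,j)=\delta_{i,j}$ handling the convention $\sum_{k=1}^{0}(\cdot):=1$) is the same in both arguments; what your organization buys is a cleaner statement of the key identity and no need to juggle the shifting summation bounds that make the paper's chain of equalities somewhat delicate to check, at the small cost of the extra (trivial) uniqueness observation. Your closing remark that the whole lemma is the Neumann expansion $(I+N)^{-1}=\sum_{k\ge0}(-N)^k$ for the strictly lower-triangular part $N$ is exactly the right conceptual summary, and either write-up is acceptable.
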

\begin{proof}
 We proceed with induction on $p$. For $p=1$ we have $B_1=C_1$, and the formula is true. For $p>1$ we proceed as follows. By \eqref{e1},
\begin{align*}
B_{p+1}=C_{p+1}-\sum\limits_{j=1}^p (-1)^{j+1} A_{j,p+1}B_j.
\end{align*}
Then by the induction hypothesis, we see that
\begin{align*}
B_{p+1}&=C_{p+1}-\sum\limits_{l=1}^p A_{l,p+1} \sum\limits_{j=1}^l C_j \sum\limits_{k=1}^{l-j} (-1)^k \left\{\sum\limits_{i_0<i_1<\cdots<i_{k-1}<i_k,\atop i_0=j,i_k=l} \prod\limits_{m=1}^k A_{i_{m-1},i_m}\right\}\\
&=C_{p+1}+\sum\limits_{j=1}^p C_j \sum\limits_{l=j}^{p}A_{l,p+1}\sum\limits_{k=1}^{l-j} (-1)^{k+1} \left\{\sum\limits_{i_0<i_1<\cdots<i_{k-1}<i_k,\atop i_0=j,i_k=l} \prod\limits_{m=1}^k A_{i_{m-1},i_m}\right\}\\
&=C_{p+1}+\sum\limits_{j=1}^p C_j \sum\limits_{k=1}^{p-j} (-1)^{k+1} \left\{\sum\limits_{i_0<i_1<\cdots<i_{k-1}<i_k<i_{k+1},\atop i_0=j,i_{k+1}=p+1} \prod\limits_{m=1}^{k+1} A_{i_{m-1},i_m}\right\}- \sum\limits_{j=1}^p C_j A_{j,p+1}\\
&=C_{p+1}+\sum\limits_{j=1}^p C_j \sum\limits_{k=2}^{p+1-j} (-1)^{k} \left\{\sum\limits_{i_0<i_1<\cdots<i_{k-1}<i_k,\atop i_0=j,i_{k}=p+1} \prod\limits_{m=1}^{k} A_{i_{m-1},i_m}\right\}- \sum\limits_{j=1}^p C_j A_{j,p+1}\\
&=C_{p+1}+\sum\limits_{j=1}^p C_j \sum\limits_{k=1}^{p+1-j} (-1)^{k} \left\{\sum\limits_{i_0<i_1<\cdots<i_{k-1}<i_k,\atop i_0=j,i_{k}=p+1} \prod\limits_{m=1}^{k} A_{i_{m-1},i_m}\right\}\\
&=\sum\limits_{j=1}^{p+1} C_j \sum\limits_{k=1}^{p+1-j} (-1)^{k} \left\{\sum\limits_{i_0<i_1<\cdots<i_{k-1}<i_k,\atop i_0=j,i_{k}=p+1} \prod\limits_{m=1}^{k} A_{i_{m-1},i_m}\right\}.
\end{align*}
Thus, the formula \eqref{e2} holds.\end{proof}

Using Lemma \ref{lemd1}, we obtain the explicit formulas of $T(\bfk\circledast\{1\}_p)$ via Kaneko-Tsumura $\psi$-values and alternating zeta values. For positive integers $j$ and $p$ with $1\leq j\leq p$, set
\begin{align}
Z(j,p):=\sum_{k=1}^{p-j}(-2)^k\sum\limits_{i_0<i_1<\cdots<i_{k-1}<i_k,\atop i_0=j,i_k=p} \prod\limits_{l=1}^k {\bar \zeta}(2i_l-2i_{l-1}),\quad Z(p,p):=1.
\end{align}
It is clear that $Z(j,p)=a_{j,p}\zeta(2p-2j)\quad (a_{j,p}\in \mathbb{Q})$. In fact, we have the following explicit evaluation.
\begin{pro} For any $j\in\N$ and $w\in\N_0$,
\begin{align}
Z(j,j+w)=\frac{(-1)^{w}\pi^{2w}}{(2w+1)!}.
\end{align}
\end{pro}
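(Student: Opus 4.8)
The plan is to show that $Z(j,p)$ depends only on $w:=p-j$, to compute the generating series $\sum_{w\ge 0}Z(j,j+w)\,t^w$ in closed form, and to recognize it as $\sin(\pi\sqrt t)/(\pi\sqrt t)$, whose Taylor coefficients are exactly $(-1)^w\pi^{2w}/(2w+1)!$.

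First I would reparametrize the nested sum. For a chain $i_0=j<i_1<\dots<i_k=p$ put $d_l:=i_l-i_{l-1}\ge 1$; then $d_1+\dots+d_k=p-j$ and $\prod_{l=1}^k{\bar\zeta}(2i_l-2i_{l-1})=\prod_{l=1}^k{\bar\zeta}(2d_l)$. Hence $Z(j,p)$ depends only on $w=p-j$, say $Z_w:=Z(j,j+w)$ with $Z_0=1$, and
\begin{equation*}
Z_w=\sum_{k\ge 0}(-2)^k\sum_{\substack{d_1,\dots,d_k\ge 1\\ d_1+\dots+d_k=w}}\prod_{l=1}^k{\bar\zeta}(2d_l),
\end{equation*}
the $k=0$ term contributing $1$ only when $w=0$, and extending the range of $k$ to $\infty$ being harmless since the terms with $k>w$ vanish. (Every argument $2d_l$ occurring here is $\ge 2$, so the convention ${\bar\zeta}(0)=1/2$ never intervenes; this also re-derives the remark $Z(j,p)=a_{j,p}\zeta(2p-2j)$ made just before the statement.)

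Next I would pass to generating functions. Set $g(t):=\sum_{d\ge 1}{\bar\zeta}(2d)\,t^d\in\R[[t]]$, so $g(0)=0$. The convolution structure above gives, as formal power series, $\sum_{w\ge 0}Z_w t^w=\sum_{k\ge 0}\bigl(-2g(t)\bigr)^k=\dfrac{1}{1+2g(t)}$. The key input is then to identify $1+2g(t)$ in closed form. Starting from the classical Mittag–Leffler expansion $\pi/\sin(\pi z)=\tfrac1z+\sum_{n\ge 1}(-1)^n\tfrac{2z}{z^2-n^2}$, multiply by $z$ and expand $\tfrac{2z^2}{z^2-n^2}=-2\sum_{d\ge 1}z^{2d}/n^{2d}$ (absolutely convergent for $|z|<1$); interchanging the two sums and using ${\bar\zeta}(2d)=-\zeta(\overline{2d})=\sum_{n\ge 1}(-1)^{n-1}n^{-2d}$ yields
\begin{equation*}
\frac{\pi z}{\sin(\pi z)}=1+2\sum_{d\ge 1}{\bar\zeta}(2d)\,z^{2d}.
\end{equation*}
So, with $t=z^2$, $1+2g(t)=\dfrac{\pi\sqrt t}{\sin(\pi\sqrt t)}$ (read as the even series $\pi z/\sin(\pi z)$ in $z$), whence $\sum_{w\ge 0}Z_w t^w=\dfrac{\sin(\pi\sqrt t)}{\pi\sqrt t}=\sum_{w\ge 0}\dfrac{(-1)^w\pi^{2w}}{(2w+1)!}\,t^w$. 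Comparing the coefficient of $t^w$ gives $Z(j,j+w)=\dfrac{(-1)^w\pi^{2w}}{(2w+1)!}$.

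I do not expect a genuine obstacle: the only care needed is in the bookkeeping of the rearrangements — the geometric-series step is purely formal in $\R[[t]]$ since $g(0)=0$, and the interchange of summations in the cosecant expansion is justified by absolute convergence near $z=0$, after which one just matches Taylor coefficients. The entire content is the observation that the nested sum defining $Z_w$ is the $w$-th coefficient of $1/(1+2g(t))$, together with the elementary fact that $1+2g(t)$ is exactly $\pi\sqrt t/\sin(\pi\sqrt t)$.
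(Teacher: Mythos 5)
Your proof is correct and follows essentially the same route as the paper's: reduce to the fact that the generating function $\sum_{w\ge 0}Z_w z^{2w}$ equals $\sin(\pi z)/(\pi z)$, by summing the geometric series over the number of parts $k$ and identifying the resulting denominator with $\pi z/\sin(\pi z)$. The only (cosmetic) difference is how that last identity is obtained: you invoke the Mittag--Leffler expansion of $\pi\csc(\pi z)$ directly and expand each partial fraction into a geometric series in $z^2/n^2$, whereas the paper derives $1-\pi z\csc(\pi z)=2f(z/2)-f(z)$ from the logarithmic derivative $f(z)=1-\pi z\cot(\pi z)$ of the sine product together with $\cot\theta-\cot 2\theta=\csc 2\theta$.
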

\begin{proof} An elementary calculation yields that
\begin{align*}
Z(j,j+w)=&\sum_{k=1}^{w}(-2)^k\sum\limits_{i_0<i_1<\cdots<i_{k-1}<i_k,\atop i_0=j,i_k=j+w} \prod\limits_{l=1}^k {\bar \zeta}(2i_l-2i_{l-1})\\
=&\sum_{k=1}^w 2^k \sum_{i_1+\cdots+i_k=w,\atop \forall i_l\geq 1} \zeta(\overline{2i_1})\zeta(\overline{2i_2})\cdots\zeta(\overline{2i_k}).
\end{align*}
Hence, it suffices to show that
\begin{equation*}
1+\sum_{w\ge 1} \left(\sum_{k=1}^w 2^k \sum_{i_1+\cdots+i_k=w} \zeta(\ol{2i_1})\cdots \zeta(\ol{2i_k}) \right) x^{2w}= 1+\sum_{w\ge 1} \frac{(-1)^w \pi^{2w}}{(2w+1)!} x^{2w}=\frac{\sin(\pi x)}{\pi x}.
\end{equation*}
By change of summation orders, we get
\begin{align*}
&\, \sum_{w\ge 1} \left(\sum_{k=1}^w 2^k \sum_{i_1+\cdots+i_k=w} \zeta(\ol{2i_1})\cdots \zeta(\ol{2i_k}) \right) x^{2w}\\
=&\,\sum_{k\ge 1} \sum_{w\ge k} \ \sum_{i_1+\cdots+i_k=w} \ \prod_{l=1}^k 2 \zeta(\ol{2i_l}) x^{2i_l}\\
=&\,\sum_{k\ge 1}\  \sum_{i_1+\cdots+i_k\ge k} \ \prod_{l=1}^k \left( \sum_{n_l\ge 1} \frac{2 (-1)^{n_l} x^{2i_l} }{n_l^{2i_l}} \right)\\
=&\,\sum_{k\ge 1}   \left( \sum_{i\ge 1} \sum_{n\ge 1} \frac{2 (-1)^{n} x^{2i} }{n^{2i}} \right)^k\\
=&\,\sum_{k\ge 1}   \left(  \sum_{n\ge 1} \frac{2 (-1)^{n} x^{2}/n^2 }{1-x^2/n^{2}} \right)^k\\
=&\, \frac{g(x)}{1-g(x)}
\end{align*}
where
\begin{equation*}
g(x)= \sum_{n\ge 1} \frac{2 (-1)^{n} x^{2}/n^2 }{1-x^2/n^{2}}
\end{equation*}
Notice that
\begin{equation*}
 f(x):= \sum_{n\ge 1} \frac{2x^{2}/n^2 }{1-x^2/n^{2}} =-x \frac{d}{dx} \left(\log \prod_{n\ge 1} \Big(1-\frac{x^2}{n^2}\Big)\right)
= -x \frac{d}{dx} \left( \log \frac{\sin(\pi x)}{\pi x}\right)=1-\pi x \cot(\pi x).
\end{equation*}
So we have
\begin{equation*}
g(x)=f\Big(\frac x2\Big)-h(x)    \quad \text{where } h(x)= \sum_{n \text{ odd}} \frac{2 x^{2}/n^2 }{1-x^2/n^{2}}.
\end{equation*}
But it is clear that
\begin{equation*}
f\Big(\frac x2\Big)+h(x)=f(x).
\end{equation*}
Thus
\begin{align*}
    g(x)=&\, f\Big(\frac x2\Big)-h(x)=2f\Big(\frac x2\Big)-f(x)\\
=&\, 1-\pi x \cot \Big(\frac\pi 2\Big)+\pi x \cot (\pi x)=1-\pi x \csc(\pi x).
\end{align*}
Here we have used the identity $\cot(\theta)-\cot(2\theta)=\csc(2\theta)$.
Finally,
\begin{align*}
  1+  \frac{g(x)}{1-g(x)} =\frac{1}{1-g(x)}= \frac{\sin(\pi x)}{\pi x}
\end{align*}
as desired.
\end{proof}

\begin{thm} For any composition $\bfk_{2m-1}\in\N^{2m-1}$ and $p\in\N$
\begin{align}\label{d4}
T(\bfk_{2m-1}\circledast \{1\}_{2p+1})&=\sum_{j=1}^p \left(\psi(\bfk_{2m-1};2j+1)-2{\bar \zeta}(2j)T((\bfk_{2m-1})_+) \right)Z(j,p).
\end{align}
\end{thm}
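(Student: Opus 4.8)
The plan is to view \eqref{d4} as the inversion of the triangular linear system furnished by Theorem~\ref{thmb3}, and then to quote Lemma~\ref{lemd1} essentially verbatim. First I would rewrite \eqref{b7}: using ${\bar \zeta}(0)=1/2$ and the identity $T(\bfk_{2m-1}\circledast\{1\}_1)=T((\bfk_{2m-1})_+)$ recorded after the definition of the convoluted $T$-values, the $j=p$ and $j=0$ terms of \eqref{b7} may be peeled off, so that for every positive integer $n$,
\begin{align*}
\psi(\bfk_{2m-1};2n+1)&-2{\bar \zeta}(2n)T((\bfk_{2m-1})_+)\\
&=T(\bfk_{2m-1}\circledast\{1\}_{2n+1})+2\sum_{j=1}^{n-1}{\bar \zeta}(2n-2j)\,T(\bfk_{2m-1}\circledast\{1\}_{2j+1}).
\end{align*}
Setting $B_j:=T(\bfk_{2m-1}\circledast\{1\}_{2j+1})$, $C_n:=\psi(\bfk_{2m-1};2n+1)-2{\bar \zeta}(2n)T((\bfk_{2m-1})_+)$ and $A_{j,n}:=2{\bar \zeta}(2n-2j)$ (so that $A_{n,n}=2{\bar \zeta}(0)=1$), this is exactly the hypothesis $\sum_{j=1}^n A_{j,n}B_j=C_n$ of Lemma~\ref{lemd1}, valid for all $1\le n\le p$ since \eqref{b7} holds for every positive integer argument.

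Next I would apply Lemma~\ref{lemd1} with $n=p$, which gives
\begin{equation*}
B_p=\sum_{j=1}^p C_j\sum_{k=1}^{p-j}(-1)^k\left\{\sum_{\substack{i_0<i_1<\cdots<i_k\\ i_0=j,\ i_k=p}}\prod_{l=1}^k A_{i_{l-1},i_l}\right\}.
\end{equation*}
Since $A_{i_{l-1},i_l}=2{\bar \zeta}(2i_l-2i_{l-1})$, one has $\prod_{l=1}^k A_{i_{l-1},i_l}=2^k\prod_{l=1}^k{\bar \zeta}(2i_l-2i_{l-1})$, and absorbing the factor $2^k$ into the sign $(-1)^k$ converts the inner sum over $k$ and the chains $i_0<\cdots<i_k$ into precisely $Z(j,p)$; here the convention $\sum_{k=1}^0(\cdot):=1$ of Lemma~\ref{lemd1} matches the convention $Z(p,p):=1$. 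Hence $B_p=\sum_{j=1}^p C_j\,Z(j,p)$, which is the assertion \eqref{d4}.

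I expect no genuine obstacle. The only point that needs care is the bookkeeping of the two boundary terms of \eqref{b7} — the $j=0$ term, which contributes $2{\bar \zeta}(2p)T((\bfk_{2m-1})_+)$ by way of $T(\bfk\circledast\{1\}_1)=T(\bfk_{+})$, and the $j=p$ term, which contributes $T(\bfk_{2m-1}\circledast\{1\}_{2p+1})$ by way of ${\bar \zeta}(0)=1/2$ — and verifying that $A_{n,n}=1$ so that Lemma~\ref{lemd1} is applicable; once these are in place the conclusion is an immediate substitution into the inversion formula.
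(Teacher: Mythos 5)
Your proposal is correct and is essentially identical to the paper's own (one-line) proof: the paper likewise invokes Lemma~\ref{lemd1} with $A_{j,p}:=2{\bar \zeta}(2p-2j)$, $B_j=T(\bfk_{2m-1}\circledast\{1\}_{2j+1})$ and $C_p=\psi(\bfk_{2m-1};2p+1)-2{\bar \zeta}(2p)T((\bfk_{2m-1})_+)$ together with \eqref{b7}. Your additional bookkeeping — peeling off the $j=0$ and $j=p$ terms of \eqref{b7} via $T(\bfk\circledast\{1\}_1)=T(\bfk_+)$ and ${\bar\zeta}(0)=1/2$, checking $A_{p,p}=1$, and absorbing $2^k$ into $(-1)^k$ to recover $Z(j,p)$ — is exactly the detail the paper leaves implicit.
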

\begin{proof} In Lemma \ref{lemd1}, setting $A_{j,p}:=2{\bar \zeta}(2p-2j)$, $B_j=T(\bfk_{2m-1}\circledast\{1\}_{2j+1})$ and $C_p=\psi(\bfk_{2m-1};2p+1)-2{\bar \zeta}(2p)T((\bfk_{2m-1})_+)$ and using \eqref{b7}, we obtain the desired evaluation.\end{proof}

\begin{thm} For any composition $\bfk_{2m}\in\N^{2m}$ and $p\in\N$,
\begin{align}\label{d5}
T(\bfk_{2m}\circledast \{1\}_{2p})&=\sum_{j=1}^p \psi(\bfk_{2m};2j)Z(j,p).
\end{align}
\end{thm}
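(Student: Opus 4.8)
The plan is to prove \eqref{d5} by the same mechanism used for the odd-depth identity \eqref{d4}, namely by applying the inversion Lemma~\ref{lemd1} to the recursion \eqref{b8}. First I would rewrite \eqref{b8} so that it matches the hypothesis \eqref{e1} of Lemma~\ref{lemd1}. Substituting $i=j+1$ turns
\[
\psi(\bfk_{2m};2p)=2\sum_{j=0}^{p-1}{\bar\zeta}(2p-2-2j)T(\bfk_{2m}\circledast\{1\}_{2j+2})
\]
into
\[
\psi(\bfk_{2m};2p)=\sum_{i=1}^p 2{\bar\zeta}(2p-2i)\,T(\bfk_{2m}\circledast\{1\}_{2i}),
\]
where the top term $i=p$ contributes $2{\bar\zeta}(0)T(\bfk_{2m}\circledast\{1\}_{2p})=T(\bfk_{2m}\circledast\{1\}_{2p})$ thanks to the convention ${\bar\zeta}(0)=1/2$.

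Next I would apply Lemma~\ref{lemd1} with $A_{j,p}:=2{\bar\zeta}(2p-2j)$ (so that $A_{p,p}=2{\bar\zeta}(0)=1$, as the lemma requires), with $B_j:=T(\bfk_{2m}\circledast\{1\}_{2j})$, and with $C_p:=\psi(\bfk_{2m};2p)$. The displayed recursion above is exactly \eqref{e1} for these choices, so \eqref{e2} gives
\[
T(\bfk_{2m}\circledast\{1\}_{2p})=\sum_{j=1}^p\psi(\bfk_{2m};2j)\sum_{k=1}^{p-j}(-1)^k\!\!\sum_{\substack{i_0<i_1<\cdots<i_k\\ i_0=j,\ i_k=p}}\prod_{l=1}^k 2{\bar\zeta}(2i_l-2i_{l-1}).
\]
Pulling the factor $2^k$ out of the product and absorbing it with $(-1)^k$ into $(-2)^k$, the inner double sum is precisely $Z(j,p)$ (the degenerate case $j=p$ corresponding to the empty-sum convention $Z(p,p)=1$), which yields \eqref{d5}.

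The argument is entirely routine and I expect no genuine obstacle: it is the exact analogue of the proof of \eqref{d4}, with \eqref{b8} in place of \eqref{b7} and the index shift $2j+1\mapsto 2j$ throughout. The only point demanding a little care is the bookkeeping of the indices together with the role of the convention ${\bar\zeta}(0)=1/2$, which is what forces $A_{p,p}=1$ and hence makes Lemma~\ref{lemd1} applicable.
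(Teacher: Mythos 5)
Your proposal is correct and is precisely the paper's own proof: the authors likewise invoke Lemma~\ref{lemd1} with $A_{j,p}=2\bar\zeta(2p-2j)$, $B_j=T(\bfk_{2m}\circledast\{1\}_{2j})$, $C_p=\psi(\bfk_{2m};2p)$, together with \eqref{b8}. You have merely spelled out the index shift and the role of the convention $\bar\zeta(0)=1/2$, which the paper leaves implicit.
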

\begin{proof} In Lemma \ref{lemd1}, setting $A_{j,p}:=2{\bar \zeta}(2p-2j)$, $B_j=T(\bfk_{2m}\circledast\{1\}_{2j})$ and $C_p=\psi(\bfk_{2m};2p)$ and using \eqref{b8}, we obtain the desired evaluation.\end{proof}

Clearly, applying \eqref{d4} and \eqref{d5} to \eqref{b6} and \eqref{b9}, we can evaluate the explicit evaluations of the convoluted $T$-values $T(\bfk_{2m-1}\circledast \{1\}_{2p})$ and $T(\bfk_{2m}\circledast \{1\}_{2p+1})$ in terms of Kaneko-Tsumura $\psi$-values and (alternating) single zeta values. Next, we prove a few duality relations of Kaneko-Tsumura $\psi$-Values.

\begin{thm}\label{thmd4} For positive integers $k,m$ and $p$,
\begin{align}
&\sum_{j=1}^p \left(\psi(\{1\}_{2m},k;2j+1)-2{\bar \zeta}(2j)T(\{1\}_{2m},k+1) \right)Z(j,p)\nonumber\\
&=\sum_{j=1}^m \left(\psi(\{1\}_{2p},k;2j+1)-2{\bar \zeta}(2j)T(\{1\}_{2p},k+1) \right)Z(j,m).
\end{align}
\end{thm}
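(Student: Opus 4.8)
The plan is to recognize both sides of Theorem~\ref{thmd4} as instances of formula~\eqref{d4}, and then to observe that the two convoluted $T$-values that result are equal for the trivial reason that the series defining them is symmetric in its two arguments.

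First I would note that $(\{1\}_{2m},k)$ is a composition of \emph{odd} depth $2m+1$, hence fits the hypothesis of \eqref{d4} with the internal index (denoted $m$ there) taken to be $m+1$; then $\big((\{1\}_{2m},k)\big)_+=(\{1\}_{2m},k+1)$, and \eqref{d4}, with its parameter kept equal to $p$, gives
\begin{equation*}
T\big((\{1\}_{2m},k)\circledast\{1\}_{2p+1}\big)=\sum_{j=1}^p \Big(\psi(\{1\}_{2m},k;2j+1)-2{\bar \zeta}(2j)\,T(\{1\}_{2m},k+1)\Big)Z(j,p),
\end{equation*}
which is exactly the left-hand side of Theorem~\ref{thmd4}. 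Interchanging the roles of $m$ and $p$ (that is, applying \eqref{d4} to $(\{1\}_{2p},k)$ with internal parameter $m$) gives
\begin{equation*}
T\big((\{1\}_{2p},k)\circledast\{1\}_{2m+1}\big)=\sum_{j=1}^m \Big(\psi(\{1\}_{2p},k;2j+1)-2{\bar \zeta}(2j)\,T(\{1\}_{2p},k+1)\Big)Z(j,m),
\end{equation*}
which is the right-hand side of Theorem~\ref{thmd4}.

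It therefore remains to prove $T\big((\{1\}_{2m},k)\circledast\{1\}_{2p+1}\big)=T\big((\{1\}_{2p},k)\circledast\{1\}_{2m+1}\big)$. Both are convoluted $T$-values of two odd-depth arguments, so by their defining series
\begin{equation*}
T(\bfk_{2a-1}\circledast\bfl_{2b-1})=2\sum_{n=1}^\infty \frac{T_n(\bfk_{2a-2})\,T_n(\bfl_{2b-2})}{(2n-1)^{k_{2a-1}+l_{2b-1}}}
\end{equation*}
(applied with $a=m+1$, $b=p+1$, so that the two truncations are $\{1\}_{2m}$ and $\{1\}_{2p}$ and the two last entries are $k$ and $1$) one obtains
\begin{equation*}
T\big((\{1\}_{2m},k)\circledast\{1\}_{2p+1}\big)=2\sum_{n=1}^\infty \frac{T_n(\{1\}_{2m})\,T_n(\{1\}_{2p})}{(2n-1)^{k+1}},
\end{equation*}
and interchanging $m$ and $p$ leaves the right side unchanged, since $T_n(\{1\}_{2m})T_n(\{1\}_{2p})$ is a product, hence symmetric. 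The series converges absolutely because $k\ge 1$ and $T_n(\{1\}_{2m})T_n(\{1\}_{2p})$ grows only polylogarithmically in $n$.

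The argument is essentially index bookkeeping; the only point needing care is the parity/shift — verifying that $(\{1\}_{2m},k)$ genuinely matches the odd-depth hypothesis behind \eqref{d4} and that its truncation and last entry come out as $\{1\}_{2m}$ and $k$ in the convolution formula (respectively $\{1\}_{2p}$ and $1$ for $\{1\}_{2p+1}$). There is no analytic obstacle. I would close by remarking that this duality is simply an avatar of the trivial symmetry $T_n(\{1\}_{2m})T_n(\{1\}_{2p})=T_n(\{1\}_{2p})T_n(\{1\}_{2m})$, transported through \eqref{d4}.
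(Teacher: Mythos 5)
Your proof is correct and follows essentially the same route as the paper: both apply \eqref{d4} to the composition $(\{1\}_{2m},k)$ (with the depth parameter shifted to $m+1$) and to $(\{1\}_{2p},k)$, and then identify the two resulting convoluted $T$-values via the manifest symmetry of the series $2\sum_{n\ge1}T_n(\{1\}_{2m})T_n(\{1\}_{2p})/(2n-1)^{k+1}$ under $m\leftrightarrow p$. The paper phrases the last step merely as ``changing $(m,p)$ to $(p,m)$,'' which is exactly the symmetry you make explicit.
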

\begin{proof} In \eqref{d4}, replacing $m$ by $m+1$, then setting $k_1=\cdots=k_{2m}=1$ and $k_{2m+1}=k$ we find that
\begin{align*}
&T((\{1\}_{2m},k)\circledast \{1\}_{2p+1})=\su \frac{T_n(\{1\}_{2m})T_n(\{1\}_{2p})}{(2n-1)^{k+1}}\\
&=\sum_{j=1}^p \left(\psi(\{1\}_{2m},k;2j+1)-2{\bar \zeta}(2j)T(\{1\}_{2m},k+1) \right)Z(j,p).
\end{align*}
Thus, changing $(m,p)$ to $(p,m)$ we obtain
\begin{align*}
&T((\{1\}_{2m},k)\circledast \{1\}_{2p+1})=\sum_{j=1}^m \left(\psi(\{1\}_{2p},k;2j+1)-2{\bar \zeta}(2j)T(\{1\}_{2p},k+1) \right)Z(j,m).
\end{align*}
Hence, we complete this proof.\end{proof}

\begin{thm} For positive integers $k,m$ and $p$,
\begin{align}
&\sum_{j=1}^p \psi(\{1\}_{2m-1},k;2j)Z(j,p)=\sum_{j=1}^m \psi(\{1\}_{2p-1},k;2j)Z(j,m).
\end{align}
\end{thm}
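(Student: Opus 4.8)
The plan is to imitate the proof of Theorem~\ref{thmd4}, but with the even-depth evaluation \eqref{d5} playing the role that \eqref{d4} played there. The key observation is that a suitably specialized convoluted $T$-value is symmetric in its two parameters, so that swapping them produces the asserted identity.

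Concretely, first I would apply \eqref{d5} with $\bfk_{2m}=(\{1\}_{2m-1},k)$, which lies in $\N^{2m}$ so that the hypotheses of \eqref{d5} are met, obtaining
\[
T\big((\{1\}_{2m-1},k)\circledast\{1\}_{2p}\big)=\sum_{j=1}^p \psi(\{1\}_{2m-1},k;2j)\,Z(j,p).
\]
Next I would expand the left-hand side using the definition of $T(\bfk_{2m}\circledast\bfl_{2p})$: here $\bfk_{2m-1}=\{1\}_{2m-1}$, $k_{2m}=k$, $\bfl_{2p-1}=\{1\}_{2p-1}$ and $l_{2p}=1$, so
\[
T\big((\{1\}_{2m-1},k)\circledast\{1\}_{2p}\big)=2\su\frac{T_n(\{1\}_{2m-1})\,T_n(\{1\}_{2p-1})}{(2n)^{k+1}},
\]
which is visibly invariant under the exchange $m\leftrightarrow p$.

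Finally, interchanging $m$ and $p$ in the first display leaves the left-hand side unchanged by the second display, hence
\[
\sum_{j=1}^p \psi(\{1\}_{2m-1},k;2j)\,Z(j,p)=\sum_{j=1}^m \psi(\{1\}_{2p-1},k;2j)\,Z(j,m),
\]
which is the assertion. There is no genuine obstacle here; the only point requiring care is the index bookkeeping, namely that the depth $2m$ of $(\{1\}_{2m-1},k)$ is even, as \eqref{d5} requires, and that the trailing exponent $l_{2p}=1$ is what yields the weight $k+1$ in the denominator. The same mechanism, combining \eqref{d4} and \eqref{d5} with \eqref{b6} and \eqref{b9}, should likewise yield companion duality relations for $\psi(\{1\}_{2m-1},k;2j+1)$ and $\psi(\{1\}_{2m},k;2j)$.
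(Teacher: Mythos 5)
Your proposal is correct and is essentially the paper's own argument: the authors likewise specialize \eqref{d5} to $\bfk_{2m}=(\{1\}_{2m-1},k)$ and exploit the manifest $m\leftrightarrow p$ symmetry of $T\big((\{1\}_{2m-1},k)\circledast\{1\}_{2p}\big)=2\sum_{n\ge 1} T_n(\{1\}_{2m-1})T_n(\{1\}_{2p-1})/(2n)^{k+1}$, exactly mirroring the proof of Theorem~\ref{thmd4}. The index bookkeeping you flag (even depth $2m$, trailing $l_{2p}=1$ giving the exponent $k+1$) is handled correctly.
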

\begin{proof} By a similar argument as in the proof of Theorem \ref{thmd4}, letting $k_1=\cdots=k_{2m-1}=1$ and $k_{2m}=k$ in \eqref{d5}, by a straightforward calculation, we obtain the desired evaluation.\end{proof}

Similarly, for positive integers $j$ and $p$ with $1\leq j\leq p$, let
\begin{align}
\widetilde{Z}(j,p):=\sum_{k=1}^{p-j}\frac{(-1)^k}{2\log^{k+1}(2)}\sum\limits_{i_0<i_1<\cdots<i_{k-1}<i_k,\atop i_0=j,i_k=p} \prod\limits_{l=1}^k {\bar \zeta}(2i_l-2i_{l-1}+1),\quad \widetilde{Z}(p,p):=\frac{1}{2\log(2)}.
\end{align}
Using \eqref{b6}, \eqref{b9} and Lemma \ref{lemd1}, by elementary calculations, we can quickly deduce the next two results:
\begin{align}\label{d9}
&T(\bfk_{2m-1}\circledast \{1\}_{2p-1})=\sum_{j=1}^p \left(\psi(\bfk_{2m-1};2j)-T(\bfk_{2m-1}\circledast \{1\}_{2j}) \right)\widetilde{Z}(j,p)
\end{align}
and
\begin{align}\label{d10}
&T(\bfk_{2m}\circledast \{1\}_{2p})=\sum_{j=1}^p \left(\psi(\bfk_{2m};2j+1)-T(\bfk_{2m}\circledast \{1\}_{2j+1}) \right)\widetilde{Z}(j,p).
\end{align}
Setting $k_1=\cdots=k_{2m-2}=1$ and $k_{2m-1}=k$ in \eqref{d9}, and $k_1=\cdots=k_{2m-1}=1$ and $k_{2m}=k$ in \eqref{d10}, we readily arrive at the following two duality relations
\begin{multline}  \label{d11}
 \sum_{j=1}^p \left(\psi(\{1\}_{2m-2},k;2j)-T((\{1\}_{2m-2},k)\circledast \{1\}_{2j}) \right)\widetilde{Z}(j,p) \\
= \sum_{j=1}^m \left(\psi(\{1\}_{2p-2},k;2j)-T((\{1\}_{2p-2},k)\circledast \{1\}_{2j}) \right)\widetilde{Z}(j,m),
\end{multline}
and
\begin{multline} \label{d12}
 \sum_{j=1}^p \left(\psi(\{1\}_{2m-1},k;2j+1)-T((\{1\}_{2m-1},k)\circledast \{1\}_{2j+1}) \right)\widetilde{Z}(j,p) \\
= \sum_{j=1}^m \left(\psi(\{1\}_{2p-1},k;2j+1)-T((\{1\}_{2p-1},k)\circledast \{1\}_{2j+1}) \right)\widetilde{Z}(j,m).
\end{multline}

\section{Explicit Evaluations of Multiple $S$-Values}\label{sec:MSV}
In this section we will use the method of contour integration and residue theorem to evaluate MSVs at depth two and three. We define a complex kernel function $\xi(s)$ with two requirements: (i). $\xi(s)$ is meromorphic in the whole complex plane. (ii). $\xi(s)$ satisfies $\xi (s)=o(s)$ over an infinite collection of circles $|s|=\rho_k$ with $\rho_k\to \infty $. Applying these two conditions of kernel function $\xi(s)$, Flajolet and Salvy discovered the following residue lemma.
\begin{lem}\emph{(\cite{FS1998})}\label{lemf1}
Let $\xi(s)$ be a kernel function and let $r(s)$ be a rational function which is $O(s^{-2})$ at infinity. Then
\begin{align}
\sum_{\alpha\in O} \Res(r(s)\xi(s),\alpha)+ \sum_{\beta\in S} \Res(r(s)\xi(s),\beta) = 0.
\end{align}
where $S$ is the set of poles of $r(s)$ and $O$ is the set of poles of $\xi(s)$ that are not poles $r(s)$ . Here $\Res(r(s),\alpha)$ denotes the residue of $r(s)$ at $s=\alpha$.
\end{lem}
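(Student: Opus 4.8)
The final statement is Lemma~\ref{lemf1}, the Flajolet--Salvy residue lemma, for which the excerpt simply cites \cite{FS1998}. So the task is to reconstruct a proof plan for this classical contour-integration lemma.

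\medskip

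The plan is to prove the identity by integrating $r(s)\xi(s)$ over a sequence of expanding contours and letting the contours grow to infinity. First I would fix the family of circles $C_k=\{|s|=\rho_k\}$ furnished by condition (ii) on the kernel function, chosen so that none of them passes through a pole of $r(s)$ or of $\xi(s)$ (possible since the pole set is discrete, after perturbing the $\rho_k$ slightly if necessary). By the residue theorem, for each $k$,
\begin{equation*}
\frac{1}{2\pi i}\oint_{C_k} r(s)\xi(s)\,ds=\sum_{\gamma}\Res\big(r(s)\xi(s),\gamma\big),
\end{equation*}
where the sum runs over all poles $\gamma$ of $r(s)\xi(s)$ enclosed by $C_k$, that is, over all $\beta\in S$ and all $\alpha\in O$ with $|\alpha|,|\beta|<\rho_k$. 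The second step is the estimate on the contour integral: since $r(s)=O(s^{-2})$ at infinity and $\xi(s)=o(s)$ on the circles $C_k$, the integrand is $o(|s|^{-1})$ uniformly on $C_k$, so the integral is bounded in absolute value by $\tfrac{1}{2\pi}\cdot 2\pi\rho_k\cdot o(\rho_k^{-1})=o(1)$ as $k\to\infty$. Hence the left-hand side tends to $0$.

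\medskip

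The third step is to pass to the limit on the right-hand side. Because $r(s)=O(s^{-2})$, the set $S$ is finite and for $k$ large enough $C_k$ encloses every $\beta\in S$; thus $\sum_{\beta\in S}\Res(r(s)\xi(s),\beta)$ contributes in full for all large $k$. The remaining enclosed poles are exactly the $\alpha\in O$ with $|\alpha|<\rho_k$, and letting $k\to\infty$ these exhaust $O$. Combining with the vanishing of the contour integral gives
\begin{equation*}
\sum_{\alpha\in O}\Res\big(r(s)\xi(s),\alpha\big)+\sum_{\beta\in S}\Res\big(r(s)\xi(s),\beta\big)=0,
\end{equation*}
which is the claimed identity — provided the series over $O$ converges, which follows from the same $o(|s|^{-1})$ bound on the integrand combined with a standard argument bounding the sum of residues in an annulus $\rho_k<|s|<\rho_{k+1}$ by the corresponding contour integrals (so the partial sums along the $\rho_k$ form a Cauchy sequence).

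\medskip

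The main obstacle is the convergence/rearrangement issue for the infinite sum $\sum_{\alpha\in O}\Res(\cdot,\alpha)$: the residue theorem only gives the \emph{partial} sums over poles inside $C_k$, so one must argue that these partial sums converge and that the value is independent of how the remaining poles are grouped. In the applications of interest $\xi(s)$ is typically $\pi\cot(\pi s)$, $\pi\csc(\pi s)$, or a product of such with polylogarithmic factors, whose poles lie at the integers with controlled residues, so the required tail estimates are routine; I would state the convergence hypothesis as part of the setup (it is implicit in \cite{FS1998}) rather than belabor it. Everything else is a direct application of the residue theorem together with the two defining properties of a kernel function and the $O(s^{-2})$ decay of $r(s)$.
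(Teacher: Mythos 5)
Your proof is correct and is the standard expanding-contour argument; the paper itself gives no proof of this lemma, merely citing Flajolet--Salvy, and your reconstruction (residue theorem on the circles $|s|=\rho_k$, the $o(|s|^{-1})$ bound forcing the contour integral to vanish, and the limit passage with the convergence caveat for the infinite sum over $O$) is exactly the argument in the cited source.
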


For convenience, set
\begin{align*}
\t(k_1,k_2,\ldots,k_r)&\,:=2^{k_1+k_2+\cdots+k_r} t(k_1,k_2,\ldots,k_r),\\
\S(k_1,k_2,\ldots,k_r)&\,:=2^{k_1+k_2+\cdots+k_r-r} S(k_1,k_2,\ldots,k_r).
\end{align*}

\begin{thm} \label{thm:evalMSV}
For an odd weight $p+q$, the double sums $\S(p,q)$ (or $\S(p,q)$) are reducible to zeta values,
\begin{align}
(1-(-1)^{p+q})\S(p,q)&=2(-1)^p \sum_{k=0}^{[p/2]} \binom{p+q-2k-1}{q-1} \zeta(2k)\t(p+q-2k)\nonumber\\
&\quad+(-1)^p \sum_{k=0}^{q-1} (1-(-1)^k) \binom{p+q-k-2}{p-1}\t(k+1)\t(p+q-k-1)\nonumber\\
&\quad-(-1)^p(1+(-1)^q)\zeta(p)\t(q),
\end{align}
where $\zeta(0),\ \zeta(1)$ and $\t(1)$ should be interpreted as $-1/2,\ 0$ and $2\log(2)$, respectively.
\end{thm}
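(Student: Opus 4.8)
The plan is to apply the Flajolet--Salvy residue lemma (Lemma~\ref{lemf1}) to a carefully chosen kernel-times-rational-function. A natural choice is to take the kernel
\[
\xi(s)=\pi\tan(\pi s)\qquad\text{or}\qquad \xi(s)=\frac{\pi}{\sin(\pi s)\cos(\pi s)},
\]
since $\tan(\pi s)$ has simple poles exactly at the half-integers $s=n-\tfrac12$ with residue $-1$, which is precisely the set of summation indices appearing in $S(p,q)$ after the rescaling $S_n \leftrightarrow \t,\S$. First I would write $\S(p,q)$ (equivalently $S(p,q)$) as a sum over half-integer (odd) indices and even indices, matching the defining series $S(p,q)=2^2\sum_{0<n_1<n_2}(2n_1)^{-p}(2n_2-1)^{-q}$ after absorbing the powers of $2$ into $\t$ and $\S$ as in the displayed abbreviations. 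The rational function $r(s)$ should be built so that its poles at the integers produce $\zeta$-values paired against the ``tail'' generated by the half-integer poles of $\tan(\pi s)$; concretely something like
\[
r(s)=\frac{1}{s^{q}}\bigl(\text{a shifted }\psi\text{- or Hurwitz-type piece in }s\bigr)
\]
with total decay $O(s^{-2})$ at infinity so that the sum of all residues vanishes.

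The key steps, in order, are: (1) fix the kernel $\xi(s)$ and the rational function $r(s)$ so that $r(s)\xi(s)=O(s^{-2})$ and so that the half-integer poles of $\xi$ generate exactly $\S(p,q)$ (times a nonzero constant depending on the parity factor $1-(-1)^{p+q}$); (2) compute the residue of $r(s)\xi(s)$ at the half-integers $s=n-\tfrac12$ — these are simple poles, so the residue is $-r(n-\tfrac12)$, and summing over $n$ reproduces the double sum plus ``diagonal'' terms of the form $\t(k+1)\t(p+q-k-1)$; (3) compute the residue at the positive (and negative) integers $s=n$, which are poles of order $p$ or $q$ coming from the $s^{-p}$/$s^{-q}$ factors in $r(s)$ — expanding $\tan(\pi s)$ and the regular part in a Taylor/Laurent series around $s=n$ produces the binomial coefficients $\binom{p+q-2k-1}{q-1}$ against $\zeta(2k)\t(p+q-2k)$; (4) collect the residue at $s=0$, which contributes the anomalous terms $\zeta(p)\t(q)$ and the constant interpretations $\zeta(0)=-\tfrac12$, $\zeta(1)=0$, $\t(1)=2\log 2$; (5) set the total sum of residues to zero and solve for $\S(p,q)$, observing that the left side carries the overall factor $(1-(-1)^{p+q})$, which is why only the odd-weight case gives a clean reduction (in even weight the coefficient vanishes and the identity is vacuous).

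The main obstacle I expect is step (3): correctly extracting the higher-order residues at the integer poles. Because $r(s)$ has a pole of order $p$ (or $q$) at each integer, one must differentiate the product $\tan(\pi s)\cdot(\text{regular factor})$ the right number of times, and the bookkeeping of which even powers of $\pi$ (i.e.\ which $\zeta(2k)$) survive, together with the precise binomial coefficients $\binom{p+q-2k-1}{q-1}$ and $\binom{p+q-k-2}{p-1}$, is delicate. The parity subtleties — the factor $(1-(-1)^k)$ killing even $k$ in the diagonal sum, and the $(1+(-1)^q)$ in front of $\zeta(p)\t(q)$ — will emerge from whether $\tan$ or $\cot$ contributes at a given pole and from the symmetry $s\mapsto -s$ of the kernel, so I would organize the computation to exploit that symmetry from the start. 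A secondary technical point is justifying that $\xi(s)=o(s)$ on a suitable sequence of expanding circles $|s|=\rho_k$ avoiding all poles (take $\rho_k$ to be integers plus a fixed offset), which is standard for $\pi\tan(\pi s)$ but must be stated. Once these residue computations are in hand, assembling them into the claimed closed form is routine algebra.
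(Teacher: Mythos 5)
Your plan is essentially the paper's proof: the paper applies Lemma~\ref{lemf1} to $F(s)=\pi\cot(\pi s)\,\psi^{(p-1)}(-s)/\bigl((s+1/2)^q(p-1)!\bigr)$, whose poles at the positive integers, the negative integers, and $s=-1/2$ produce, respectively, the $\S(p,q)$ and $\zeta(2k)\t(p+q-2k)$ terms, the $(-1)^{p+q}\S(p,q)$ and $\zeta(p)\t(q)$ terms, and the diagonal $\t(k+1)\t(p+q-k-1)$ terms; your $\pi\tan(\pi s)$ version with a shifted polygamma factor is exactly this integrand after the substitution $s\mapsto s-\tfrac12$. Two small corrections: in the Flajolet--Salvy framework the polygamma factor must be counted as part of the kernel $\xi(s)$ rather than of the rational function $r(s)$ (which must be rational and is just the power of $s+\tfrac12$), and the $\zeta(p)\t(q)$ term arises from the negative-integer poles, while the order-$q$ pole at $s=-1/2$ (your $s=0$) yields the diagonal $\t\cdot\t$ sum rather than the anomalous term.
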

\begin{proof} We consider the contour integral
\begin{align*}
\oint\limits_{\left( \infty  \right)}F(s)ds=\oint\limits_{\left( \infty  \right)} \frac{\pi \cot(\pi s)\psi^{(p-1)}(-s)}{(s+1/2)^q (p-1)!} ds\quad(q\geq 2,q\geq 1),
\end{align*}
where $\oint_{\left( \infty  \right)}$ denotes integration along large circles, that is, the limit of integrals $\oint_{\left| s \right| = R}$. If $p=1$, replace $\psi(-s)$ by $\psi(-s)+\gamma$. Clearly, ${\pi \cot(\pi s)\psi^{(p-1)}(-s)}/(p-1)!$ is a kernel function. Hence, $\oint_{\left( \infty  \right)}F(s)ds=0$.
The function $F(s)$ only has poles at the $n$ and $-1/2$, $n$ is any integer. By residue theorem, we deduce
\begin{align*}
&\Res(F(s),-n)=\frac{\zeta(p)-H_{n-1}^{(p)}}{(n-1/2)^q}(-1)^{p+q},\quad n\geq 1,\\
&\Res(F(s),n)=(-1)^p\binom{p+q-1}{p}\frac{1}{(n+1/2)^{p+q}} +\frac{(-1)^p\zeta(p)+H_n^{(p)}}{(n+1/2)^q}\\&\quad\quad\quad\quad\quad\quad\quad-2(-1)^p \sum_{k=1}^{[p/2]}\binom{p+q-2k-1}{q-1}\frac{\zeta(2k)}{(n+1/2)^{p+q-2k}},\quad n\geq 0,\\
&\Res(F(s),-1/2)=-(-1)^p \sum_{k=0}^{q-1} (1-(-1)^k) \binom{p+q-k-2}{p-1}\t(k+1)\t(p+q-k-1).
\end{align*}
Thus, applying Lemma \ref{lemf1} and summing these three contributions we can quickly deduce the statement of the theorem.
\end{proof}

As two simple examples of Theorem~\ref{thm:evalMSV}, we have
\begin{align*}
\S(3,2)=62\zeta(5)-32\zeta(2)\zeta(3),\quad S(3,2)=\frac{31}{4}\zeta(5)-4\zeta(2)\zeta(3).
\end{align*}
The next result provides some relations among double $S$-values.
\begin{thm} For positive integers $m,p$ and $q>1$,
\begin{align}
&(-1)^{m-1} \sum\limits_{i+j=p-1,\atop i,j\geq 0} \binom{m+i-1}{i}\binom{q+j-1}{j}\S(m+i,q+j) \nonumber\\
+&(-1)^{p-1} \sum\limits_{i+j=m-1,\atop i,j\geq 0} \binom{p+i-1}{i}\binom{q+j-1}{j} \S(p+i,q+j)\nonumber\\
&=\binom{p+q+m-2}{q-1}\t(p+q+m-1) \nonumber\\&\quad+ \sum\limits_{i+j=p-1,\atop i,j\geq 0} \binom{m+i-1}{i}\binom{q+j-1}{j} (-1)^i \zeta(m+i)\t(q+j)\nonumber\\&\quad+\sum\limits_{i+j=m-1,\atop i,j\geq 0} \binom{p+i-1}{i}\binom{q+j-1}{j} (-1)^i \zeta(p+i)\t(q+j)\nonumber\\
&\quad-\sum\limits_{i+j=q-1,\atop i,j\geq 0} \binom{m+i-1}{i}\binom{p+j-1}{j} \t(m+i)\t(p+j),
\end{align}
where $\zeta \left(1\right)$ and $\t(1)$ should be interpreted as $0$ and $-2\log(2)$ wherever it occurs.
\end{thm}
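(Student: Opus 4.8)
\emph{Proof proposal.} The proof will follow the Flajolet--Salvy contour-integration scheme already used for Theorem~\ref{thm:evalMSV}, the new ingredient being a kernel built from a \emph{product of two polygamma functions}, which is what generates the two independent binomial summations. The plan is to apply Lemma~\ref{lemf1} to the rational function $r(s)=(s+1/2)^{-q}$, which is $O(s^{-2})$ because $q\ge 2$, and the kernel
\[
\xi(s)=\frac{\psi^{(m-1)}(-s)\,\psi^{(p-1)}(-s)}{(m-1)!\,(p-1)!}.
\]
This $\xi$ is meromorphic with poles exactly at the non-negative integers, and on the circles $|s|=N+1/2$ it is $O\big((\log|s|)^2\big)=o(s)$ since $\psi(-s)=O(\log|s|)$ there; hence it is a legitimate kernel, and as the only pole of $r$ is $s=-1/2$, Lemma~\ref{lemf1} yields
\[
\sum_{n\ge 0}\Res\big(F(s),n\big)+\Res\big(F(s),-1/2\big)=0,\qquad F(s):=r(s)\xi(s).
\]

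First I would compute $\Res(F,n)$ for $n\ge 0$. Near $s=n$, each $\psi^{(k-1)}(-s)$ has an order-$k$ pole whose Laurent expansion in $u:=s-n$ is
\[
\psi^{(k-1)}(-s)=(k-1)!\Big[u^{-k}+(-1)^k\sum_{j\ge 0}\binom{k+j-1}{j}\big(\zeta(k+j)+(-1)^{k+j}H_n^{(k+j)}\big)u^j\Big],
\]
while $(s+1/2)^{-q}$ contributes $\sum_{c\ge 0}(-1)^c\binom{q+c-1}{c}u^c/(n+1/2)^{q+c}$. Extracting the coefficient of $u^{-1}$ in the product of these three series splits $\Res(F,n)$ into a term combining the two principal parts and two symmetric terms combining one principal part with a Taylor tail of the other (the all-Taylor part carries no $u^{-1}$). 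Summing over $n$ and using $\sum_{n\ge 0}(n+1/2)^{-Q}=\t(Q)$ and $\sum_{n\ge 0}H_n^{(P)}(n+1/2)^{-Q}=\S(P,Q)$ --- the latter being precisely the series form of $\S(P,Q)$ --- converts the first piece into $\binom{p+q+m-2}{q-1}\t(p+q+m-1)$ and the two cross pieces into the two $\S$-summations together with the two $\zeta\cdot\t$-summations occurring in the statement. Then I would compute $\Res(F,-1/2)$: both polygammas are analytic there, with Taylor expansion
\[
\psi^{(k-1)}(-s)=(-1)^k(k-1)!\sum_{a\ge 0}\binom{k+a-1}{a}\t(k+a)\,(s+1/2)^a
\]
obtained from $\psi^{(j)}(1/2)=(-1)^{j+1}j!\,\t(j+1)$ and the definition of $\t$; so $\Res(F,-1/2)$ is the coefficient of $(s+1/2)^{q-1}$ in the product of the two Taylor series, namely $\sum_{i+j=q-1}\binom{m+i-1}{i}\binom{p+j-1}{j}\t(m+i)\t(p+j)$.

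Substituting these into $\sum_{n\ge 0}\Res(F,n)+\Res(F,-1/2)=0$, multiplying through by $(-1)^{m+p}$, and transposing the two $\S$-summations to the left-hand side then gives the asserted identity after sign bookkeeping; the crucial collapses are $(-1)^p(-1)^{p+b}(-1)^c=(-1)^{b+c}=(-1)^{m-1}$ on the $\S(p+b,q+c)$-coefficients (with $b+c=m-1$), symmetrically $(-1)^{p-1}$ on the other $\S$-sum, and $(-1)^{p+c+1}=(-1)^i$ (with $i+c=p-1$) reducing the $\zeta\cdot\t$-coefficients to $(-1)^i$. I expect the genuine work to be twofold. One part is purely computational: keeping every sign in the polygamma Laurent and Taylor expansions consistent --- especially on the $H_n^{(\cdot)}$ pieces and on the alternating factors coming from $(s+1/2)^{-q}$ --- since a single slip propagates through all the terms. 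The more delicate part is the boundary case $m=1$ or $p=1$, where the symbols ``$\zeta(1)$'' and ``$\t(1)$'' above are formally divergent: here one instead expands $\psi(-s)$ directly, using $\psi(n+1)=H_n-\gamma$ near $s=n$ and $\psi(1/2)=-\gamma-2\log 2$ near $s=-1/2$, and verifies that the Euler--Mascheroni contributions cancel between the two residue computations, leaving exactly the regularized values of $\zeta(1)$ and $\t(1)$ specified in the statement.
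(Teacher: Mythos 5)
Your proposal is correct and follows exactly the paper's own route: the paper's proof consists of applying Lemma~\ref{lemf1} to the contour integral $\oint_{(\infty)}\psi^{(m-1)}(-s)\psi^{(p-1)}(-s)\,ds/\bigl((s+1/2)^q(m-1)!(p-1)!\bigr)$ and carrying out the residue computations, which is precisely your choice of kernel and rational function. Your residue expansions at $s=n$ and $s=-1/2$ and the identifications $\sum_{n\ge0}(n+1/2)^{-Q}=\t(Q)$, $\sum_{n\ge0}H_n^{(P)}(n+1/2)^{-Q}=\S(P,Q)$ supply the details the paper omits.
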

\begin{proof} Consider the contour integral
\begin{align*}
\oint\limits_{\left( \infty  \right) } \frac{\psi^{(m-1)}(-s)\psi^{(p-1)}(-s)}{(s+1/2)^q(m-1)!(p-1)!}ds=0
\end{align*}
and use residue computations to obtain the desired evaluation.\end{proof}

Next, we evaluate the triple $S$-values. According to the definition of triple $S$-values, we have
\begin{equation}\label{equ:TripleS}
 \S(k_1,k_2,k_3)=\zeta(k_3)\S(k_1,k_2)-\su \frac{H^{(k_1)}_{n-1}H^{(k_3)}_{n-1}}{(n-1/2)^{k_2}},\quad (k_1\geq 1,k_2,k_3\geq 2).
\end{equation}
By using the method of contour integration and residue theorem, we get the following theorem.
\begin{thm}\label{thm:ParityTripleS} For positive integers $m,p$ and $q>1$,
\begin{align}
&(1+(-1)^{p+q+m})\su \frac{H^{(m)}_{n-1}H^{(p)}_{n-1}}{(n-1/2)^q}\nonumber\\
&=(-1)^{p+q+m} \left(\zeta(m)\S(p,q)+\zeta(p)\S(m,q) \right)-(-1)^{p+m} (1+(-1)^q)\zeta(m)\zeta(p)\t(q)\nonumber\\
&\quad-(-1)^m\zeta(m)\S(p,q)-(-1)^p\S(m,q)-(-1)^{p+m}\binom{p+q+m-1}{q-1}\t(p+q+m)\nonumber\\
&\quad+2(-1)^{p+m} \sum_{k=1}^{[(p+m)/2]} \binom{p+q+m-2k-1}{q-1}\zeta(2k)\t(p+q+m-2k)\nonumber\\
&\quad+(-1)^{p+m} \sum_{k=1}^{p+1} (-1)^k \binom{k+m-2}{m-1} \binom{p+q-k}{q-1} \left\{\begin{array}{l} \zeta(k+m-1)\t(p+q-k+1)\\ -(-1)^{k+m}\S(k+m-1,p+q-k+1)  \end{array}\right\} \nonumber\\
&\quad+(-1)^{p+m} \sum_{k=1}^{p+1} (-1)^k \binom{k+p-2}{p-1} \binom{m+q-k}{q-1} \left\{\begin{array}{l} \zeta(k+p-1)\t(m+q-k+1)\\ -(-1)^{k+p}\S(k+p-1,m+q-k+1)  \end{array}\right\} \nonumber\\
&\quad+2(-1)^{p+m} \sum_{2k_1+k_2\leq p+1,\atop k_1,k_2\geq 1} \binom{k_2+m-2}{m-1} \binom{p+q-2k_1-k_2}{q-1} \zeta(2k_1)\nonumber\\ &\quad\quad\quad\quad\quad\quad\quad\quad\quad\quad\times\left\{\begin{array}{l} \zeta(k_2+m-1)\t(p+q-2k_1-k_2+1)\\ -(-1)^{k_2+m}\S(k_2+m-1,p+q-2k_1-k_2+1)  \end{array}\right\} \nonumber\\
&\quad+2(-1)^{p+m} \sum_{2k_1+k_2\leq m+1,\atop k_1,k_2\geq 1} \binom{k_2+p-2}{p-1} \binom{m+q-2k_1-k_2}{q-1} \zeta(2k_1)\nonumber\\ &\quad\quad\quad\quad\quad\quad\quad\quad\quad\quad\times\left\{\begin{array}{l} \zeta(k_2+p-1)\t(m+q-2k_1-k_2+1)\\ -(-1)^{k_2+p}\S(k_2+p-1,m+q-2k_1-k_2+1)  \end{array}\right\} \nonumber\\
&\quad+(-1)^{p+m} \sum_{k_1+k_2+k_3=q-1,\atop k_1,k_2,k_3\geq 0} (1-(-1)^{k_1})\binom{k_2+m-1}{m-1}\binom{k_3+p-1}{p-1} \t(k_1+1)\t(k_2+m)\t(k_3+p),
\end{align}
where $\zeta \left(1\right)$ and $\t(1)$ should be interpreted as $0$ and $2\log(2)$ wherever it occurs.
\end{thm}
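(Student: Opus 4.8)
The plan is to follow the contour-integral/residue method already used for Theorem~\ref{thm:evalMSV}. Consider the integral
\[
\oint_{(\infty)}\frac{\pi\cot(\pi s)\,\psi^{(m-1)}(-s)\,\psi^{(p-1)}(-s)}{(s+1/2)^q\,(m-1)!\,(p-1)!}\,ds ,
\]
where, exactly as in the proof of Theorem~\ref{thm:evalMSV}, $\psi(-s)$ is replaced by $\psi(-s)+\gamma$ whenever $m=1$ or $p=1$. On the circles $|s|=N+\tfrac12$ the factor $\psi^{(m-1)}(-s)\psi^{(p-1)}(-s)$ is $O(\log^2|s|)$ (and decays if $m,p\ge2$) while $\pi\cot(\pi s)$ stays bounded, so $\pi\cot(\pi s)\psi^{(m-1)}(-s)\psi^{(p-1)}(-s)/\bigl((m-1)!(p-1)!\bigr)$ is a kernel function in the sense of Flajolet--Salvy, and $(s+1/2)^{-q}=O(s^{-2})$ because $q>1$. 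Lemma~\ref{lemf1} then gives that the sum of all residues of the integrand is zero. The poles are: the negative integers $-n$, $n\ge1$ (simple, from $\pi\cot$); the non-negative integers $n\ge0$ (of order $m+p+1$, since $\psi^{(m-1)}(-s)$ and $\psi^{(p-1)}(-s)$ each have a pole of order $m$, $p$ there and $\pi\cot$ a simple one); and $s=-\tfrac12$ (of order $q$).

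The residue at $s=-n$ is immediate from $\psi^{(m-1)}(n)=(-1)^m(m-1)!\bigl(\zeta(m)-H^{(m)}_{n-1}\bigr)$: it equals $(-1)^{m+p+q}\bigl(\zeta(m)-H^{(m)}_{n-1}\bigr)\bigl(\zeta(p)-H^{(p)}_{n-1}\bigr)(n-\tfrac12)^{-q}$. Summing over $n\ge1$ and using $\sum_{n\ge1}H^{(a)}_{n-1}(n-\tfrac12)^{-q}=\S(a,q)$ and $\sum_{n\ge1}(n-\tfrac12)^{-q}=\t(q)$ expresses this contribution through the target sum $\su H^{(m)}_{n-1}H^{(p)}_{n-1}(n-\tfrac12)^{-q}$, the double $\S$-values $\S(m,q),\S(p,q)$, and $\zeta(m)\t(q),\zeta(p)\t(q),\zeta(m)\zeta(p)\t(q)$. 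The residue at $s=-\tfrac12$ is also short: with $u=s+\tfrac12$ one has $\pi\cot(\pi s)=-\pi\tan(\pi u)=-2\sum_{j\ge0}\t(2j+2)u^{2j+1}$, and, from $\psi^{(j)}(\tfrac12)=(-1)^{j+1}j!\,\t(j+1)$,
\[
\frac{\psi^{(m-1)}(\tfrac12-u)}{(m-1)!}=(-1)^m\sum_{l\ge0}\binom{m-1+l}{m-1}\t(m+l)\,u^l
\]
and likewise for $p$; reading off the coefficient of $u^{q-1}$ in the triple product produces exactly the last line of the theorem, the factor $1-(-1)^{k_1}$ encoding that only odd powers of $u$ occur in $\pi\tan(\pi u)$.

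The main work is the residue at $s=n$, $n\ge0$. Expanding around $s=n$,
\[
\pi\cot(\pi s)=\frac1{s-n}-2\sum_{k\ge1}\zeta(2k)(s-n)^{2k-1},\qquad
(s+\tfrac12)^{-q}=\sum_{l\ge0}(-1)^l\binom{q+l-1}{l}(n+\tfrac12)^{-q-l}(s-n)^l ,
\]
while differentiating $\psi(-s)+\gamma=(s-n)^{-1}+H_n+\sum_{j\ge1}\bigl((-1)^jH^{(j+1)}_n-\zeta(j+1)\bigr)(s-n)^j$ $\,m-1$ times gives
\[
\frac{\psi^{(m-1)}(-s)}{(m-1)!}=(s-n)^{-m}+\sum_{j\ge0}\binom{m-1+j}{m-1}\bigl((-1)^jH^{(m+j)}_n+(-1)^m\zeta(m+j)\bigr)(s-n)^j
\]
(with the convention $\zeta(1)=0$), and similarly for $p$. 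The residue is the coefficient of $(s-n)^{-1}$ in the fourfold product; the constraint on the total power of $(s-n)$ leaves only finitely many contributing choices in each family. After summing over $n\ge0$ and shifting $n\mapsto n-1$: the unique term built from the constant parts $H^{(m)}_n$ and $H^{(p)}_n$ reproduces $\su H^{(m)}_{n-1}H^{(p)}_{n-1}(n-\tfrac12)^{-q}$, and its coefficient $1$ combines with the $(-1)^{p+q+m}$ coming from the $s=-n$ residues to give the factor $1+(-1)^{p+q+m}$ on the left. Every other contribution collapses, via $\sum_n H^{(a)}_n(n+\tfrac12)^{-c}=\S(a,c)$ and $\sum_n(n+\tfrac12)^{-c}=\t(c)$, to a $\S$-value of depth $\le2$ or a single $\t$- or $\zeta$-value times binomial coefficients: the deepest singular product gives the $\binom{p+q+m-1}{q-1}\t(p+q+m)$ term; pulling one $\zeta(2k)$ from $\pi\cot$ gives the $\zeta(2k)\t(\cdot)$ family; pairing a regular coefficient of $\psi^{(m-1)}(-s)$ with the singular part $(s-n)^{-p}$ of $\psi^{(p-1)}(-s)$ (and vice versa), with index $k=j+1$, gives the bracketed $\zeta(k+m-1)\t(\cdot)-\S(k+m-1,\cdot)$ families; and doing the same while also pulling a $\zeta(2k_1)$ from $\pi\cot$ gives the doubly-indexed families with the constraint $2k_1+k_2\le p+1$ (resp.\ $\le m+1$). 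Collecting all three residue contributions, setting the total to zero and moving the non-target terms to the right yields the asserted formula; the degenerate cases $m=1$ or $p=1$ are covered by the $\psi(-s)+\gamma$ substitution and the stated conventions $\zeta(1)=0$, $\t(1)=2\log2$.

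The hard part will be this last step: organizing the coefficient of $(s-n)^{-1}$ in an order-$(m+p+1)$ pole into the exact binomial-and-sign pattern above, keeping track of the several independent bookkeeping indices (whether $\psi^{(m-1)}(-s)$ or $\psi^{(p-1)}(-s)$ contributes its singular part, how many $\zeta(2k)$ factors are extracted from $\pi\cot$, which Taylor coefficient of $(s+1/2)^{-q}$ is used) and verifying that each resulting sum is genuinely a depth-$\le2$ object, so that no stray depth-three quantity survives apart from the target sum itself.
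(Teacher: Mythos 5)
Your proposal is correct and follows exactly the paper's route: the paper's own proof consists of considering the same contour integral $\oint_{(\infty)}\pi\cot(\pi s)\,\psi^{(m-1)}(-s)\psi^{(p-1)}(-s)\,(s+1/2)^{-q}/((m-1)!(p-1)!)\,ds=0$ and "applying the residue theorem" after "a rather tedious computation." Your residue expansions at $s=-n$, $s=n\ge 0$ and $s=-1/2$ (including the $\psi(-s)+\gamma$ convention and the identifications $\sum_n H^{(a)}_{n-1}(n-\tfrac12)^{-q}=\S(a,q)$, $\sum_n(n-\tfrac12)^{-q}=\t(q)$) are the correct ingredients, so you have in fact supplied more detail than the paper does.
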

\begin{proof} Considering the contour integral
\begin{align*}
\oint\limits_{\left( \infty  \right) } \pi \cot(\pi s)\frac{\psi^{(m-1)}(-s)\psi^{(p-1)}(-s)}{(s+1/2)^q(m-1)!(p-1)!}ds=0
\end{align*}
and applying residue theorem, we may deduce the desired evaluation after a rather tedious computation.\end{proof}

\begin{cor}\label{cor:ParityTripleS}
For positive integer $q>1$,
\begin{align}
(1+(-1)^q) \su \frac{H^2_{n-1}}{(n-1/2)^q}&=2\S(q,2)+2q\S(q+1,1)+4\zeta(2)\t(q)-\frac{q(q+1)}{2}\t(q+2)\nonumber\\&\quad+\sum_{k_1+k_2+k_3=q-1,\atop k_1,k_2,k_3\geq 0} (1-(-1)^{k_1}) \t(k_1+1)\t(k_2+1)\t(k_3+1).
\end{align}
\end{cor}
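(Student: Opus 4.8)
The plan is to obtain Corollary~\ref{cor:ParityTripleS} as the specialization $m=p=1$ of Theorem~\ref{thm:ParityTripleS}. With $m=p=1$ one has $H_{n-1}^{(1)}=H_{n-1}$ and $(-1)^{p+q+m}=(-1)^{q+2}=(-1)^q$, so the left-hand side of Theorem~\ref{thm:ParityTripleS} becomes
\[(1+(-1)^q)\su \frac{H_{n-1}^2}{(n-1/2)^q},\]
which is precisely the left-hand side of the corollary. (When $q$ is odd both sides vanish identically, so there is really content only for $q$ even, although the substitution argument is insensitive to this.) It therefore remains to substitute $m=p=1$ into the right-hand side of Theorem~\ref{thm:ParityTripleS} and simplify.

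Next I would run through the right-hand side term by term. The convention $\zeta(1)=0$ eliminates every summand carrying an explicit factor $\zeta(m)$ or $\zeta(p)$, which removes (or reduces to the terms reabsorbed in the sums below) the first three lines of the formula. The two double summations $\sum_{2k_1+k_2\le p+1}$ and $\sum_{2k_1+k_2\le m+1}$ are over empty index sets, since $p=m=1$ forces $k_2\le 0$, so they contribute nothing. In the remaining pieces the degenerate binomials collapse: $\binom{p+q+m-1}{q-1}=\binom{q+1}{q-1}=\tfrac{q(q+1)}{2}$ gives the term $-\tfrac{q(q+1)}{2}\t(q+2)$; the upper limit $[(p+m)/2]=1$ reduces the $\zeta(2k)$-sum to its single term $2\zeta(2)\t(q)$; and in the two single sums $\sum_{k=1}^{p+1}(\cdots)$ the binomials $\binom{k+m-2}{m-1}$, $\binom{k+p-2}{p-1}$ equal $\binom{k-1}{0}=1$, so each of those sums has only its $k=1$ and $k=2$ terms (the $k=1$ logarithmic piece dropping by $\zeta(1)=0$, with $\binom{q}{q-1}=q$ in the $k=1$ term and $\binom{q-1}{q-1}=1$ in the $k=2$ term). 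Collecting everything: the $\zeta(2)\t(q)$ contributions add to $4\zeta(2)\t(q)$; the surviving double $S$-values assemble into $2\S(q,2)+2q\S(q+1,1)$; and the final triple product, with $\binom{k_2+m-1}{m-1}=\binom{k_3+p-1}{p-1}=1$, becomes exactly $\sum_{k_1+k_2+k_3=q-1}(1-(-1)^{k_1})\t(k_1+1)\t(k_2+1)\t(k_3+1)$. This is the asserted identity.

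The main (and essentially only) obstacle is bookkeeping: tracking the overall sign $(-1)^{p+m}=1$ together with the degenerate binomials and the vanishing sums, and — the one genuinely delicate point — making sure the residual double $S$-values are written with the argument order in which they arise in the residue computation behind Theorem~\ref{thm:ParityTripleS}, so that the $k=2$ contributions produce $\S(q,2)$ rather than $\S(2,q)$ and, where needed, the relations of Theorem~\ref{thm:evalMSV} normalize lower-weight double $S$-values into the displayed form. If one prefers an independent route, the corollary also has a direct one-step derivation: apply Lemma~\ref{lemf1} to the kernel $\pi\cot(\pi s)\,(\psi(-s)+\gamma)^2/(s+1/2)^q$ and sum the residues at the integers and at $s=-1/2$, exactly as in the proof of Theorem~\ref{thm:ParityTripleS} but with both $\psi^{(m-1)}(-s)$, $\psi^{(p-1)}(-s)$ taken equal to $\psi(-s)+\gamma$; the residue at $-1/2$ then produces the triple-$\t$ sum, the residues at negative integers produce $\su H_{n-1}^2/(n-1/2)^q$ via \eqref{equ:TripleS}, and the residues at nonnegative integers produce the remaining terms. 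Either way no new ingredient beyond the results already established is required.
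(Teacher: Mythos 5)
Your proof follows exactly the paper's route: the paper's entire proof of this corollary is the one-line observation that it is Theorem~\ref{thm:ParityTripleS} specialized at $m=p=1$, and your term-by-term bookkeeping (the $\zeta(1)=0$ cancellations, the empty double sums, $\binom{q+1}{q-1}=\tfrac{q(q+1)}{2}$, and the three $\zeta(2)\t(q)$ contributions merging into $4\zeta(2)\t(q)$) is the correct way to carry that substitution out. The ``delicate point'' you flag about argument order is real but cuts the other way from what you assert: literal substitution into the printed theorem yields $2\S(2,q)$ and $2q\,\S(1,q+1)$ (both convergent), whereas the printed corollary's $\S(q+1,1)$ is divergent, so this is an inconsistency between the paper's two displayed formulas (Theorem~\ref{thm:evalMSV} cannot be used to swap arguments, as it only reduces odd-weight double values) rather than a defect of your method.
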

\begin{proof} This follows immediately from Theorem \ref{thm:ParityTripleS} by setting $m =p=1$.
\end{proof}

Theorem \ref{thm:ParityTripleS} together with \eqref{equ:TripleS}
provides an explicit form of the following parity result.
\begin{cor}
For positive integers $k_1\geq 1$ and $k_2,k_3\geq 2$, if weight $k_1+k_2+k_3$ is an even, then the triple $S$-values $S(k_1,k_2,k_3)$ can be expressed as a rational linear combination of products of double $S$-values and Riemann zeta values.
\end{cor}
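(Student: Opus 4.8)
The plan is to obtain the corollary by combining Theorem~\ref{thm:ParityTripleS} with the elementary reduction \eqref{equ:TripleS}. First I would specialize Theorem~\ref{thm:ParityTripleS} to $m=k_1$, $p=k_3$, $q=k_2$, which is legitimate since $k_1\ge 1$, $k_3\ge 2$ and $k_2>1$. Because the weight $k_1+k_2+k_3$ is even, the prefactor $1+(-1)^{k_1+k_2+k_3}$ on the left-hand side equals $2$, so the theorem yields a closed expression
\[
\su \frac{H^{(k_1)}_{n-1}H^{(k_3)}_{n-1}}{(n-1/2)^{k_2}}=\tfrac12\,\bigl(\text{right-hand side of Theorem~\ref{thm:ParityTripleS}}\bigr).
\]
Substituting this into \eqref{equ:TripleS} gives $\S(k_1,k_2,k_3)$, hence $S(k_1,k_2,k_3)=2^{\,3-k_1-k_2-k_3}\S(k_1,k_2,k_3)$, as an explicit finite expression; it then remains only to check that every term of that expression is a rational multiple of a product of a double $S$-value with Riemann zeta values.

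Next I would dispose of the $\t$-values of a single argument: for $n\ge 2$ one has $\t(n)=2^n t(n)=(2^n-1)\zeta(n)$, so each such $\t$-value occurring in Theorem~\ref{thm:ParityTripleS}---the factor $\t(p+q+m)$, the factors $\t(p+q+m-2k)$, and every $\t$-factor of argument $\ge 2$ inside the various double and triple sums---becomes a rational multiple of a single $\zeta(n)$; the terms carrying $\zeta(1)=0$ vanish, and those carrying $\zeta(0)=-\tfrac12$ contribute only rational constants. After this substitution, every summand of Theorem~\ref{thm:ParityTripleS} that contains no factor $\t(1)$ is plainly of the required shape: each piece $\zeta(\cdot)\,\S(\cdot,\cdot)$ or $\S(\cdot,\cdot)$ is a double $S$-value times Riemann zeta values---one checks in passing that the indices that appear, such as $k+m-1\ge 1$ and $p+q-k+1\ge q\ge 2$, lie in the admissible range for a double $S$-value---while every remaining piece is a rational multiple of a product of Riemann zeta values. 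Tracking the powers of $2$ that relate $S$ to $\S$ and $t$ to $\zeta$ keeps all coefficients rational.

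The step I expect to be the real obstacle is the treatment of $\t(1)=2\log 2$. Inspecting Theorem~\ref{thm:ParityTripleS} shows that a factor $\t(1)$ can arise only in the final triple-$\t$ sum, and only when $k_1=m=1$ (so that its middle factor, of the form $\t(b+m)$ with $b\ge 0$, can equal $\t(1)$); since $k_3\ge 2$ its last factor always has argument $\ge 2$, and its first factor equals $\t(1)$ only in the summand annihilated by $1-(-1)^0=0$. A short parity analysis shows that the two $\t$-factors accompanying such a $\t(1)$ have, respectively, an even argument $2a\ge 2$ and an odd argument $2b+1\ge 3$, so each offending summand is a rational multiple of $\log 2\cdot\zeta(2a)\,\zeta(2b+1)$. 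To remove the $\log 2$ I would apply Theorem~\ref{thm:evalMSV} in the odd weight $2a+1$, which gives
\[
\S(1,2a)=a\,\t(2a+1)-\sum_{\substack{1\le j\le 2a-1\\ j\ \mathrm{odd}}}\t(j+1)\,\t(2a-j),
\]
whose $j=2a-1$ term is $\t(2a)\,\t(1)=2\log 2\cdot\t(2a)$; solving for it expresses $2\log 2\cdot\zeta(2a)$ as a rational linear combination of $\zeta(2a+1)$, the double $S$-value $\S(1,2a)$, and products $\zeta(j+1)\zeta(2a-j)$ of two single zeta values. Multiplying through by $\zeta(2b+1)$ and substituting back removes every $\log 2$, leaving $S(k_1,k_2,k_3)$ as a rational linear combination of products of double $S$-values and Riemann zeta values. (When $k_1\ge 2$ no $\t(1)$ appears at all, and this last step is unnecessary.)

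Apart from this $\log 2$ bookkeeping, the argument has no conceptual content; the main effort is simply to carry the numerous summations of Theorem~\ref{thm:ParityTripleS} through \eqref{equ:TripleS} and to keep track of the powers of $2$, which is why the corollary is stated qualitatively rather than as a single explicit formula.
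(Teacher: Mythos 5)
Your proposal is correct and follows the paper's own route: the paper obtains the corollary precisely by combining Theorem~\ref{thm:ParityTripleS} (whose left-hand side becomes $2\sum_{n\ge1} H^{(k_1)}_{n-1}H^{(k_3)}_{n-1}/(n-1/2)^{k_2}$ when the weight is even) with the elementary reduction \eqref{equ:TripleS}. The one place you go beyond the paper's one-line justification is the elimination of $\t(1)=2\log 2$ from the final triple-$\t$ sum in the case $k_1=1$ via Theorem~\ref{thm:evalMSV}; that step is genuinely needed for the statement as written (since $\log 2$ is not a Riemann zeta value), and your parity analysis together with the identity for $\S(1,2a)$ carries it out correctly.
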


More general, we have the following parity result for MMVs whose proof provides an explicit form of Theorem~\ref{thm:MMVparity}.
\begin{thm}
For positive integers $k_1\geq 1$ and $k_2,k_3\geq 2$, if weight $k_1+k_2+k_3$ is an even, then the triple $M$-values $M(k_1,k_2,k_3; \varepsilon_1,\varepsilon_2,\varepsilon_3)$ can be expressed as a rational linear combination of products of double $M$-values and single $M$-values.
\end{thm}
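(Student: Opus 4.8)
The plan is to mimic the residue computation behind Theorem~\ref{thm:ParityTripleS}, replacing the single kernel $\pi\cot(\pi s)$ by one tailored to the signature $\bfeps=(\eps_1,\eps_2,\eps_3)$. Write $N_j(a)=2a$ if $\eps_j=1$ and $N_j(a)=2a-1$ if $\eps_j=-1$, and let $h_j(b)$ be the restricted depth-one sum $\sum N_j(a)^{-k_j}$ over those $a$ with $N_j(a)<N_2(b)$. Grouping the defining series of $M(k_1,k_2,k_3;\bfeps)$ by its middle index and writing the tail over the third index as a restricted zeta value minus a restricted harmonic sum gives
\begin{equation*}
M(k_1,k_2,k_3;\bfeps)=M(k_3;\eps_3)\,M(k_1,k_2;\eps_1,\eps_2)-8\sum_{b\ge1}\frac{h_1(b)\,h_3(b)}{N_2(b)^{k_2}},
\end{equation*}
the exact analogue of \eqref{equ:TripleS}; the first term is already a product of a single and a double $M$-value, the hypotheses $k_2,k_3\ge2$ ensure convergence, and the whole game is to evaluate the second sum.

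Next I would build the Flajolet--Salvy integral. For each of the eight signatures choose $y\in\{0,-\tfrac12\}$, $a_1,a_3\in\{0,\tfrac12\}$ and a kernel $K(s)$ among $\pi\cot(\pi s)$, $\pi\tan(\pi s)$, $\tfrac12(\pi\cot(\pi s)\pm\pi\csc(\pi s))$, $\tfrac12(\pi\tan(\pi s)\pm\pi\sec(\pi s))$, selected so that the poles of $K$ lie exactly over the admissible $b$-values, so that $\psi^{(k_1-1)}(a_1-s)$ and $\psi^{(k_3-1)}(a_3-s)$ generate $h_1,h_3$ together with their matching restricted zeta values, and so that $(s-y)^{-k_2}$ produces the factor $N_2(b)^{-k_2}$ (when $k_1=1$ one replaces $\psi(a_1-s)$ by $\psi(a_1-s)+\gamma$, as in Theorem~\ref{thm:ParityTripleS}). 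Since the integrand is a kernel function times a rational factor that is $O(s^{-2})$ at infinity, Lemma~\ref{lemf1} says its residues sum to zero. The residues at the simple poles of $K$ reproduce, up to a sign, the target sum $\sum_b h_1h_3 N_2(b)^{-k_2}$, plus products of a restricted harmonic sum with a restricted zeta value (products of a double and a single $M$-value) and restricted zeta values alone (single $M$-values); the residues at the high-order poles of the polygamma factors --- which overlap those of $K$ --- contribute, through the Leibniz rule, a second copy of $\sum_b h_1h_3 N_2(b)^{-k_2}$ (the analogue of the $\Res(F,n)$ term in the proof of Theorem~\ref{thm:ParityTripleS}), together with further double and single $M$-values and ``binomial--zeta'' cross terms; and the residue at the exceptional pole $s=y$ is a finite sum of products of three restricted depth-one sums, i.e.\ of three single $M$-values, with rational coefficients. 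Summing all residues to zero, the two copies of the target sum combine with total coefficient $\pm(1+(-1)^{k_1+k_2+k_3})$, so
\begin{equation*}
\bigl(1+(-1)^{k_1+k_2+k_3}\bigr)\sum_{b\ge1}\frac{h_1(b)h_3(b)}{N_2(b)^{k_2}}=\bigl(\text{explicit $\Q$-combination of single and double $M$-values and their products}\bigr).
\end{equation*}
When the weight is even the prefactor equals $2$; substituting back into the first display then expresses $M(k_1,k_2,k_3;\bfeps)$ in the asserted form, which in particular makes Theorem~\ref{thm:MMVparity} explicit in depth three.

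The underlying argument is uniform across the eight signatures, so I expect the main obstacle to be purely organizational: one has to run the residue calculation once per pattern, carefully tracking which of the arguments $a_1,a_3,y$ are integral and which are half-integral, and checking that the various residue sums collapse --- through elementary identities such as $\cot\theta-\cot2\theta=\csc2\theta$ and its $\tan$/$\sec$/$\csc$ companions --- precisely into products of a double and a single $M$-value and into triple products of single $M$-values. As in Theorem~\ref{thm:ParityTripleS}, where even the depth-three $S$-value identity is already long, the resulting general formula will be bulky; I would therefore present the uniform argument in full and write out the completely explicit identity only for the most symmetric signatures (for instance the alternating ones, which recover the multiple $S$-value case treated above), leaving the remaining bookkeeping to the reader.
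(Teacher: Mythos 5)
Your opening decomposition is exactly the paper's first step: the authors likewise split the triple sum at the middle index and write
\begin{equation*}
M(k_1,k_2,k_3;\bfeps)=M(k_3;\eps_3)M(k_1,k_2;\eps_1,\eps_2)-\sum_{n\ge 1}\frac{1+\eps_2(-1)^n}{n^{k_2}}\bigl(H^{(k_1)}_{n-1}-\eps_1\bar H^{(k_1)}_{n-1}\bigr)\bigl(H^{(k_3)}_{n}-\eps_3\bar H^{(k_3)}_{n}\bigr),
\end{equation*}
which is your identity after expanding each signature factor $1+\eps_j(-1)^m$ into the ordinary and alternating harmonic numbers $H^{(k)}_n$ and $\bar H^{(k)}_n$. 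Where you diverge is in the treatment of the remaining quadratic sum. The paper does not run any new contour integral here: it expands the product into the six standard families $\sum_n H^{(m)}_nH^{(p)}_n/n^q$, $\sum_n \bar H^{(m)}_nH^{(p)}_n/n^q$, etc., together with their alternating-outer-index companions, and then cites Theorem~3.6 and Corollary~3.7 of the first author's paper \cite{X2020}, which assert that all six are reducible to (alternating) double zeta values when $m+p+q$ is even; converting alternating double zeta values back into double $M$-values finishes the argument. You instead keep the parity-restricted sums intact and propose to prove the reduction directly by a signature-adapted Flajolet--Salvy computation, generalizing the paper's own Theorem~\ref{thm:ParityTripleS}. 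Both routes rest on the same residue technology (the cited results in \cite{X2020} are themselves proved this way); yours is self-contained but costs eight kernel/parameter configurations, while the paper's is a two-line reduction to known results at the price of an external citation.

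One point in your sketch deserves more care than the phrase ``purely organizational'' suggests. The clean appearance of the factor $1+(-1)^{k_1+k_2+k_3}$ in Theorem~\ref{thm:ParityTripleS} comes from the fact that the kernel has poles on both half-axes and the integrand is (anti)symmetric under the reflection interchanging them, so the negative-side residues reproduce the \emph{same} quadratic sum up to the sign $(-1)^{\text{weight}}$. For asymmetric signatures ($\eps_1\neq\eps_3$, or $\eps_2$ differing from both) the reflection need not fix your integrand, and the negative-side residues can instead produce a \emph{different} member of the family of quadratic sums (a different outer parity or a shifted denominator). In that case a single contour integral yields a linear relation among several quadratic sums rather than an equation for the target alone, and you must either symmetrize, add stuffle relations, or argue by induction on the family to close the system --- which is essentially the content of the results you would otherwise be citing. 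This is not a fatal flaw, but it is the real work hidden in your last paragraph.
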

\begin{proof}  According to the definition of MMVs $M(k_1,k_2,\ldots,k_r;\varepsilon_1,\varepsilon_2,\ldots,\varepsilon_r)$, we can rewritten the triple $M$-values $M(k_1,k_2,k_3; \varepsilon_1,\varepsilon_2,\varepsilon_3)$  in the form
\begin{align*}
&M(k_1,k_2,k_3;\varepsilon_1,\varepsilon_2,\varepsilon_3)\\&=\sum_{0<m_1<m_2<m_3} \frac{(1+\varepsilon_1(-1)^{m_1})(1+\varepsilon_2(-1)^{m_2})(1+\varepsilon_3(-1)^{m_3})}{m_1^{k_1}m_2^{k_2}m_3^{k_3}}\\
&=\sum_{n=2}^\infty \frac{1+\varepsilon_2(-1)^n}{n^{k_2}} \sum_{i=1}^{n-1} \frac{1+\varepsilon_1(-1)^i}{i^{k_1}} \sum_{j=n+1}^\infty \frac{1+\varepsilon_3(-1)^j}{j^{k_3}}\\
&=M(k_3;\varepsilon_3)M(k_1,k_2;\varepsilon_1,\varepsilon_2)-\sum_{n=1}^\infty \frac{1+\varepsilon_2(-1)^n}{n^{k_2}} (H_{n-1}^{(k_1)}-\varepsilon_1 {\bar H}_{n-1}^{(k_1)})(H_{n}^{(k_3)}-\varepsilon_1 {\bar H}_{n}^{(k_3)}),
\end{align*}
where $H_n^{(k)}$ and ${\bar H}_n^{(k)}$ are denoted the harmonic number and alternating harmonic number, respectively, which for positive integers $k$ and $n$ are defined by
\begin{align*}
H^{(k)}_{n}:=\sum_{j=1}^n \frac{1}{j^k}, \quad {\bar H}^{(k)}_{n}:=\sum_{j=1}^n \frac{(-1)^{j-1}}{j^k}\quad \text{and}\quad H^{(k)}_{0}={\bar H}^{(k)}_{0}:=0.
\end{align*}
On the other hand, in \cite[Thm. 3.6 and Cor. 3.7]{X2020}, the first author proved the results that if
$p+q+m$ is even, and $q>1,m,p$ are positive integers, then
\begin{align*}
&\su \frac{H^{(m)}_nH^{(p)}_n}{n^q},\ \su \frac{{\bar H}^{(m)}_nH^{(p)}_n}{n^q},\ \su \frac{{\bar H}^{(m)}_n{\bar H}^{(p)}_n}{n^q}, \\
&\su \frac{H^{(m)}_nH^{(p)}_n}{n^q}(-1)^{n-1},\ \su \frac{{\bar H}^{(m)}_nH^{(p)}_n}{n^q}(-1)^{n-1},\ \su \frac{{\bar H}^{(m)}_n{\bar H}^{(p)}_n}{n^q}(-1)^{n-1}
\end{align*}
are reducible to (alternating) double zeta values.

Moreover, from the definition of alternating MZVs, it is easy to see that the (alternating) double zeta values can be expressed in terms of double $M$-values. Hence, we obtain the desired description.
\end{proof}

\section{Dimension Computation of MMVs}\label{sec:dimMMV}

Let $\MMV_w$ be the $\Q$-vector space generated by all the MMVs of weight $w$ and denote all its subspaces similarly. Set $\dim_\Q V_0=1$ for all the subspaces of $\MMV$ including $\MMV$ itself.
\begin{thm} \label{thm:MMVdim} Let $F_0=F_1=1$ and $F_n=F_{n-1}+F_{n-2}$ for all $n\ge 2$. Then
\begin{equation*}
 \dim_\Q \MMV_w\le F_w-1.
\end{equation*}
\end{thm}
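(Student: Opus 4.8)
The plan is to realize $\MMV_w$ inside the space of weight-$w$ alternating multiple zeta values, invoke the known dimension bound there, and recover the extra $-1$ from the fact that $(\log 2)^w$ is not a $\Q$-linear combination of weight-$w$ MMVs.

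First I would reduce to that bound. By \eqref{equ:MMVint} together with $\omz=\tx_0$, $\omn=\txp-\txn$ and $\omp=\txp+\txn$, every MMV of weight $w$ is a $\Q$-linear combination of iterated integrals over $[0,1]$ in $\tx_0,\txp,\txn$, hence an alternating MZV of weight $w$; writing $\ZZ^{(2)}_w$ for the $\Q$-span of all such values we get $\MMV_w\subseteq\ZZ^{(2)}_w$. Since $\log 2=-\zeta(\overline 1)\in\ZZ^{(2)}_1$ and $\ZZ^{(2)}=\bigoplus_{w\ge 0}\ZZ^{(2)}_w$ is a graded $\Q$-algebra, also $(\log 2)^w\in\ZZ^{(2)}_w$. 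I would then invoke Deligne's theorem (via mixed Tate motives over $\Z[1/2]$) that $\dim_\Q\ZZ^{(2)}_w\le F_w$, the Fibonacci numbers arising because $\ZZ^{(2)}_0,\ZZ^{(2)}_1,\ZZ^{(2)}_2,\dots$ have dimensions at most $1,1,2,3,5,\dots$. Then $\MMV_w+\Q(\log 2)^w\subseteq\ZZ^{(2)}_w$, so the theorem follows once we prove
\begin{equation*}
(\log 2)^w\notin\MMV_w\qquad(w\ge 1),
\end{equation*}
since then $\dim_\Q\MMV_w+1\le\dim_\Q\ZZ^{(2)}_w\le F_w$.

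For this, the case $w=1$ is immediate ($\MMV_1=\{0\}$, no composition of weight $1$ being admissible). For $w\ge 2$ I would work with motivic periods: with $\MMV^{\mathfrak m}_w\subseteq\ZZ^{(2),\mathfrak m}_w$ the corresponding motivic spaces and the period map restricting to a surjection $\MMV^{\mathfrak m}_w\twoheadrightarrow\MMV_w$, it is enough to show the motivic $(\log 2)^w$ lies outside $\MMV^{\mathfrak m}_w$. Let $\Phi\colon\ZZ^{(2),\mathfrak m}_w\to\mathcal L_1^{\otimes w}$ be the map obtained by iterating the infinitesimal coaction (Goncharov's formula) so as to split off one weight-one piece at each of the $w$ stages, $\mathcal L_1$ being the weight-one part of the level-two Lie coalgebra, and let $\lambda\colon\mathcal L_1^{\otimes w}\to\Q$ extract the coefficient of $(\log^{\mathfrak m}2)^{\otimes w}$ in a fixed basis. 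One has $\Phi\bigl((\log 2)^w\bigr)=w!\,(\log^{\mathfrak m}2)^{\otimes w}$, so $\lambda\Phi\bigl((\log 2)^w\bigr)=w!\neq 0$. On the other hand, an admissible MMV $\zeta^{\mathfrak m}(\bfk;\bfeps)$ is, by \eqref{equ:MMVint} with $k_r\ge 2$, represented by a word beginning with $\omz=dt/t$; expanding $\omn,\omp$ in $\tx_0,\txp,\txn$ makes it a $\Q$-combination of motivic iterated integrals $I^{\mathfrak m}(0;a_1,\dots,a_{w-1},0;1)$ with $a_i\in\{0,\pm1\}$ whose entry adjacent to the endpoint $1$ is $0$. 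In any term of $\Phi$ of such an integral, the weight-one factor recorded when that entry is stripped is $I^{\mathfrak m}_1(a;0;1)=\log^{\mathfrak m}(1/a)$ for some $a\in\{0,\pm1\}$: this is $0$ for $a\in\{0,1\}$ (a regularized $\zeta^{\mathfrak m}(1)$, resp.\ $\log^{\mathfrak m}1$) and a nonzero multiple of $(2\pi i)^{\mathfrak m}$ for $a=-1$, and in no case does it involve $\log^{\mathfrak m}2$. Hence every term of $\Phi\bigl(\zeta^{\mathfrak m}(\bfk;\bfeps)\bigr)$ has a tensor slot free of $\log^{\mathfrak m}2$, so $\lambda\Phi$ annihilates $\MMV^{\mathfrak m}_w$ while $\lambda\Phi\bigl((\log 2)^w\bigr)\neq 0$. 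This proves $(\log 2)^w\notin\MMV_w$, completing the argument.

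The step I expect to be the main obstacle is this last one: verifying that the ``leading $(\log 2)^w$-part'' of the coaction vanishes on \emph{every} admissible MMV needs careful bookkeeping of Goncharov's coaction on words in $\{\omz,\omp,\omn\}$, attention to the regularization conventions of Section~\ref{sec:MMV}, and the $\Q$-linear independence of $(2\pi i)^{\mathfrak m}$ and $\log^{\mathfrak m}2$ in $\mathcal L_1$. A more self-contained route would be to prove $(\log 2)^w\notin\MMV_w$ by a weight-filtration argument bootstrapping the mechanism of Section~\ref{sec:MMV} (the proof that $\log(2)M$ is again an MMV), but making that yield a clean codimension-one statement appears to require essentially the same motivic input. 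Finally, I would remark that the bound is expected to be sharp: $\dim_\Q\MMV_w=F_w-1$.
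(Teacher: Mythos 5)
Your proposal is correct in outline and reaches the bound by a genuinely different route from the paper's at the crucial step. Both arguments rest on the Deligne--Goncharov bound $\dim_\Q\ES_w\le F_w$ and on the idea that $\MMV_w$ misses exactly the $\log^w2$ direction, but they diverge in how that last fact is established. The paper first proves the \emph{equality} $\MMV_w=\ES'_w$, where $\ES'_w$ is the span of Euler sums with final exponent $>1$ (splitting each such Euler sum by the parities of its summation indices), and then asserts --- on the basis of Deligne's explicit generating set \eqref{equ:DelBasis} and the expected free polynomial structure, with numerical confirmation relegated to a remark --- that $\ES'_w$ is generated by the $F_w-1$ basis monomials other than $\zeta(\overline{1})^w$. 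You instead use only the easy inclusion $\MMV_w\subseteq\ES_w$ and prove directly, at the level of motivic periods, that $(\log 2)^w\notin\MMV_w$: the observation that every admissible MMV word begins with $\omz$, so that in Goncharov's notation the entry adjacent to the endpoint $1$ is $0$ and the weight-one coaction factor produced when that entry is stripped is $I_1^{\mathfrak m}(a;0;1)$ with $a\in\{0,\pm1\}$, hence never a nonzero multiple of $\log^{\mathfrak m}2$, is exactly the right mechanism, and your passage through the motivic spaces correctly sidesteps the non-injectivity of the period map. In this respect your argument actually supplies a proof of the step the paper handles by assertion plus Maple verification, at the cost of importing the full formalism of motivic iterated integrals and the coaction.

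Two caveats. First, your worry about ``the $\Q$-linear independence of $(2\pi i)^{\mathfrak m}$ and $\log^{\mathfrak m}2$ in $\mathcal L_1$'' is misplaced in a way that only helps you: for $\mathcal{MT}(\Z[1/2])$ one has $\mathrm{Ext}^1(\Q(0),\Q(1))=\Z[1/2]^{\times}\otimes\Q=\Q\cdot\log 2$, and since $-1$ is torsion the class $\log^{\mathfrak m}(-1)$ vanishes in the weight-one piece of the unipotent coalgebra, so the offending factor $I_1^{\mathfrak m}(-1;0;1)$ is simply $0$ there rather than an independent element; either reading kills the $(\log^{\mathfrak m}2)^{\otimes w}$ coefficient. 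Second, your route does not recover the identification $\MMV_w=\ES'_w$, which the paper extracts from its proof and reuses (e.g.\ in the corollary that $\log(2)M(\bfk;\bfeps)$ is again an MMV); if you want those consequences you would still need the parity-splitting argument showing that every Euler sum with final exponent $>1$ lies in $\MMV_w$.
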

\begin{proof}
Let $\ES_w$ be the $\Q$-vector space generated by the Euler sums of weight $w$.
Setting $\ES_0=1$, we have
the dimension bound $\ES_w\le F_w$ obtained by Deligne and Goncharov \cite{DeligneGo2005} and moreover a set of generators shown by Deligne \cite[Thm. 7.2]{Deligne2010}:
\begin{equation}\label{equ:DelBasis}
    \left\{ \prod \zeta(\bfk;1,\dots,1,-1) (2\pi i)^{2n}:
2n+\sum_\bfk \lambda(\bfk)|\bfk|=w, n\ge 0\right\},
\end{equation}
where the product runs through all possible Lyndon words $\bfk$
on odd numbers (with $1<3<5<\cdots$) with multiplicity $\lambda(\bfk)$ so that $2n+\sum_{\bfk} \lambda(\bfk)|\bfk|=w$.
Note that the ordering of indices in the definition of Euler sums is opposite in loc. sit. So the definition of Lyndon words here has opposite order, too.

Observe that the vector space freely generated by basis vectors corresponding to regularized Euler sums
of weight $w$ is dual to the degree $w$ part of the free associative algebra
$\Q\langle\!\langle e_0,e_{-1},e_1\rangle\!\rangle$. One expects that $\ES_w$
is a weighted polynomial algebra with $F_w$ generators in weight $w$. The reason is that
$\iota({\rm Lie} U_w)$ should be a Lie algebra freely generated by one element in each odd degree.

By the above consideration, one deduces that all Euler sums of the form
\begin{equation*}
\ES'=\Big\{\zeta(\bfk;\bfeps): \bfk\in\N^r, \bfeps\in\{\pm1\}^r, k_r>1, r\in\N\Big\},
\end{equation*}
can be generated by
\begin{equation*}
    \left\{ \prod \zeta(\bfk;1,\dots,1,-1) (2\pi i)^{2n}: 2n+\sum_\bfk \lambda(\bfk)|\bfk|=w,  n\ge 0,
\prod \zeta(\bfk;1,\dots,1,-1) \ne \zeta(-1)^w \right\}
\end{equation*}
which has cardinality $F_w-1$. We now show that $\ES'=\MMV$. The inclusion $\MMV\subseteq \ES'$ is obvious.
On the other hand, suppose $\bfk\in\N^r, \bfeps\in\{\pm1\}^r, k_r>1$. Then it is easy to see that
\begin{equation*}
\zeta(\bfk;\bfeps)=\sum_{0<n_1<\dots<n_r}\frac{\eps_1^{n_1} \dots \eps_r^{n_r}}{n_1^{k_1} \dots n_r^{k_r}}
=\sum_{\gd_1=0}^1 \cdots \sum_{\gd_r=0}^1
\sum_{\substack{0<n_1<\dots<n_r \\ n_j\equiv \gd_j \pmod{2} \ \forall j }}\frac{\eps_1^{n_1} \dots \eps_r^{n_r}}{n_1^{k_1} \dots n_r^{k_r}} \in \MMV.
\end{equation*}
This concludes the proof of the theorem.
\end{proof}

\begin{re}
(i). The basis \eqref{equ:DelBasis} has been used in the computation with Form in \cite{BlumleinBrVe2010}.
(ii). Notice that $\zeta(-1)^w=(-1)^w \log^w 2.$ The absence of these elements from MMVs is verified also by Maple computations using the basis \eqref{equ:DelBasis} adopted by \cite{BlumleinBrVe2010}.
\end{re}

\begin{cor} Let $\bfeps\in\{\pm1\}^r$ and $\bfk\in\N^r$ be an admissible composition of
positive integers of weight $w$. Then $\log (2)M(\bfk;\bfeps)$ can be expressed as a
$\Q$-linear combination of MMVs of weight $w+1$.
\end{cor}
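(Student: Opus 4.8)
The plan is to obtain this directly from the proof of Theorem~\ref{thm:MMVdim}. Recall that there one shows $\MMV_w$ coincides with the span $\ES'_w$ of the admissible Euler sums of weight $w$ and, more precisely, that $\MMV_w$ is the $\Q$-span of the ``Deligne products''
\[
 v=\prod \zeta(\bfk;1,\dots,1,-1)\,(2\pi i)^{2n},\qquad 2n+\sum_{\bfk}\lambda(\bfk)|\bfk|=w,\ n\ge 0,
\]
where the product runs over Lyndon words $\bfk$ on the odd integers with multiplicities $\lambda(\bfk)$, \emph{excluding} the single element $\zeta(\bar 1)^{\,w}=(-1)^w\log^w 2$. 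The only other ingredient is the identity $\log 2=-\zeta(\bar 1)$, together with the observation that $\zeta(\bar 1)=\zeta(1;-1)$ is itself such a factor, corresponding to the Lyndon word $(1)$ of weight $1$ on the odd integers.

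First I would check that multiplication by $\zeta(\bar 1)$ sends the generating set of $\MMV_w$ into the generating set of $\MMV_{w+1}$. Indeed, if $v$ is a Deligne product of weight $w$ with $v\ne\zeta(\bar 1)^{\,w}$, then $\zeta(\bar 1)\,v$ is again a Deligne product, now of weight $w+1$, obtained from $v$ by raising the multiplicity of the word $(1)$ by one; and $\zeta(\bar 1)\,v=\zeta(\bar 1)^{\,w+1}$ would force $v=\zeta(\bar 1)^{\,w}$, which is excluded. Hence $\zeta(\bar 1)\,v$ belongs to the generating set of $\MMV_{w+1}$, so $\zeta(\bar 1)\,v\in\MMV_{w+1}$ for every generator $v$ of $\MMV_w$.

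The conclusion is then immediate: given admissible $\bfk\in\N^r$ of weight $w$ and $\bfeps\in\{\pm1\}^r$, the value $M(\bfk;\bfeps)$ lies in $\MMV_w$, hence is a $\Q$-linear combination of such generators $v$; applying the previous step term by term and using $\log 2=-\zeta(\bar 1)$ gives
\[
 \log(2)\,M(\bfk;\bfeps)=-\zeta(\bar 1)\,M(\bfk;\bfeps)\in\MMV_{w+1},
\]
i.e. $\log(2)M(\bfk;\bfeps)$ is a $\Q$-linear combination of MMVs of weight $w+1$. This in particular answers affirmatively the question raised after Theorem~\ref{thm:RDSoverR} and generalizes the earlier proposition on $\log(2)M(\bfk;1,\dots,1,\eps)$, this time with no regularized double shuffle computation.

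The step that genuinely needs to be pinned down — and it is the only one — is the bookkeeping of the previous paragraph: one must be sure that the \emph{unique} Deligne product of weight $w+1$ omitted from the generating set of $\MMV_{w+1}$ is $\zeta(\bar 1)^{\,w+1}$, and that under $v\mapsto\zeta(\bar 1)\,v$ this element is hit only by the omitted weight-$w$ generator $\zeta(\bar 1)^{\,w}$. No linear independence of the Deligne products is needed, only that they span the relevant spaces, which is exactly what the proof of Theorem~\ref{thm:MMVdim} supplies; by contrast, a hands-on attempt via the stuffle product would produce non-admissible tails such as $\zeta(k_1,\dots,k_r,\bar 1)$ and be considerably more awkward.
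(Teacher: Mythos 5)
Your argument is correct and is essentially the paper's own proof, which simply cites the fact (from the proof of Theorem~\ref{thm:MMVdim}) that $\MMV_w$ is exactly the codimension-one subspace of $\ES_w$ complementary to $\Q\log^w2$; you have merely made explicit the generator-by-generator bookkeeping ($\zeta(\bar 1)\cdot v$ is again a non-excluded Deligne product) that the paper leaves implicit. The only caveat — shared equally by the paper's one-line proof — is that both arguments use the reverse inclusion that every non-excluded Deligne product of weight $w+1$ actually lies in $\MMV_{w+1}$, whereas the proof of Theorem~\ref{thm:MMVdim} as written only establishes that $\MMV_{w+1}$ is contained in their span.
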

\begin{proof}
This follows from the proof of Thm.~\ref{thm:MMVdim} since $\MMV_w$ is exactly the space of $\ES_w$ of codimension 1 with its complementary subspace generated by $\log^w 2$.
\end{proof}

 From numerical computation we have the following conjecture.
\begin{con}
We have the following generating functions
\begin{align*}
\sum_{w=0}^\infty  \dim_\Q \MtV_w t^w=&\, \frac{t}{1-t-t^2},\\
\sum_{w=0}^\infty  \dim_\Q \MMV_w t^w=\sum_{w=0}^\infty  \dim_\Q \MMVe_w t^w=&\, \frac{1}{1-t-t^2}-\frac{t}{1-t},
\end{align*}
and for all $k\in\N$
\begin{equation*}
 \dim_\Q \MTV_{2k}=\dim_\Q \MTV_{2k-1}+\dim_\Q \MTV_{2k-2}.
\end{equation*}
\end{con}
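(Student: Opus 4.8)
The plan is to separate the conjecture into its upper-bound half, which is within reach of the machinery developed here, and its lower-bound half, which reduces to a single deep open problem on Euler sums. Observe first that $\sum_{w\ge0}\bigl(F_w-[w\ge1]\bigr)t^w=\frac{1}{1-t-t^2}-\frac{t}{1-t}$, so the $\MMV$ (resp.\ $\MMVe$) part of the conjecture is precisely the assertion $\dim_\Q\MMV_w=F_w-1$ (resp.\ $\dim_\Q\MMVe_w=F_w-1$) for all $w\ge1$; since $\MMVe_w\subseteq\MMV_w$, proving the $\MMVe$ statement amounts to proving the $\MMV$ statement together with $\MMV_w=\MMVe_w$. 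Now Theorem~\ref{thm:MMVdim} already gives $\dim_\Q\MMV_w\le F_w-1$, and its proof shows $\ES_w=\MMV_w+\Q\!\cdot\!\log^w2$, whence $\dim_\Q\ES_w-1\le\dim_\Q\MMV_w\le F_w-1$; so $\dim_\Q\MMV_w=F_w-1$ would follow immediately from the equality $\dim_\Q\ES_w=F_w$, i.e.\ from the (open) sharpness of the Deligne--Goncharov bound.

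Next I would prove $\MMV_w=\MMVe_w$, a weight-preserving and hence unconditional statement. In depth $1$ it is immediate, since $M(k;-1)$ and $M(k;1)$ are both rational multiples of $\zeta(k)$ and $\zeta(k)\in\Q\,M(k;1)\subseteq\MMVe_k$. For larger depth I would argue by induction on depth, rewriting every admissible $M(\bfk;\bfeps)$ with $\eps_1=-1$ as a $\Q$-linear combination of multiple $M$-values of the same weight all having $\eps_1=1$, the inductive step using the duality of Theorem~\ref{thm:dualMMVo} together with the weight-graded shuffle and stuffle relations of Propositions~\ref{prop:MMVshahomo} and~\ref{prop:MMVstIShomo}. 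I expect this to be the most delicate \emph{combinatorial} point: a single use of duality need not land in $\MMVe$, so one genuinely needs the induction together with the product relations to move odd-signature values into the even-signature span.

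For $\MtV$ and $\MTV$ the relevant observations are that $2^{r}t(\bfk)=M(\bfk;-1,\dots,-1)$ exhibits each admissible multiple $t$-value as a multiple mixed value of odd signature, and that $\MTV_w\subseteq\MMV_w$ by definition; the predicted count $F_{w-1}$ and the recursion $\dim_\Q\MTV_{2k}=\dim_\Q\MTV_{2k-1}+\dim_\Q\MTV_{2k-2}$ reflect additional relations special to these subclasses. The upper bounds should come from three ingredients: (a) the finite and regularized double shuffle relations of Section~\ref{sec:MMV} restricted to all-odd-signature (resp.\ alternating-signature) words; (b) the duality relations of Theorem~\ref{thm:dualMMVo} and \cite{KTA2019}; and (c) the parity theorem, Theorem~\ref{thm:MMVparity}, which in weight $2k$ eliminates every odd-depth multiple $T$-value in favour of products and lower depths. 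Carrying this out means producing, in each weight, an explicit spanning set of the predicted cardinality, which I would organise by a Lyndon-word bookkeeping modelled on Deligne's generating set for $\ES$; the recursion for $\MTV$ should drop out of matching the surviving even-depth generators in weight $2k$ against the weight $2k-1$ and weight $2k-2$ pieces.

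The main obstacle is uniform across all four families: the matching lower bounds are out of reach unconditionally, since $\dim_\Q\MMV_w\ge F_w-1$ is equivalent to the sharpness of $\dim_\Q\ES_w\le F_w$ (equivalently, the conjectural freeness of the relevant motivic Lie algebra), and the $\MtV$ and $\MTV$ lower bounds are at least as hard. What I would realistically expect to establish is therefore the full collection of upper bounds together with the identity $\MMV=\MMVe$, leaving the exact generating functions conditional on the Euler-sum dimension conjecture; an unconditional proof of the conjecture would require a genuine advance on that problem.
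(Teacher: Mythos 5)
The statement you are asked to prove is a \emph{conjecture}: the paper offers no proof of it, only the numerical evidence of Table~\ref{Table:dimMMV}, and says explicitly that the generating functions were ``arrived at'' from that data. Your reduction of the $\MMV$ part to the sharpness of the Deligne--Goncharov bound is correct and consistent with what the paper actually establishes: Theorem~\ref{thm:MMVdim} and its corollary give $\dim_\Q\MMV_w=\dim_\Q\ES_w-1\le F_w-1$, so the $\MMV$ generating function is indeed equivalent to $\dim_\Q\ES_w=F_w$, which is open. Your candour that the lower bounds are out of reach is the right assessment.

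However, two of the pieces you claim could be established unconditionally are themselves open, so the proposal has genuine gaps beyond the one you acknowledge. First, $\MMVe=\MMV$ is listed by the authors as one of the three \emph{conjectural} relations in their concluding diagram; your proposed induction on depth via Theorem~\ref{thm:dualMMVo} together with the shuffle and stuffle products is only a sketch, and the point you yourself flag as delicate --- that a single application of duality need not land in $\MMVe$, so the induction must be closed using product relations --- is precisely where no argument is supplied. Second, there is no known mechanism producing the upper bounds $\dim_\Q\MtV_w\le F_{w-1}$ or the recursion $\dim_\Q\MTV_{2k}=\dim_\Q\MTV_{2k-1}+\dim_\Q\MTV_{2k-2}$: the double shuffle, duality and parity relations have not been shown to cut the all-odd-signature or alternating-signature subspaces down to the predicted sizes, and the ``Lyndon-word bookkeeping'' you invoke is not carried out (these counts, like Hoffman's for $\MtV$, are purely empirical). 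What your proposal can genuinely salvage is the $\MMV$ upper bound, which is already Theorem~\ref{thm:MMVdim}, and the conditional equivalence with the Euler-sum dimension conjecture; the remainder of the statement stays conjectural.
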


The conjectural dimensions for MtVs are already noticed by Hoffman in \cite{H2019} and he even suggested some possible basis for these subspaces of $\MMV$. We arrived at our conjectures from the data in Table~\ref{Table:dimMMV}.

\begin{table}[!h]
{
\begin{center}
\begin{tabular}{  |c|c|c|c|c|c|c|c|c|c|c|c|c|c|c| } \hline
       $w$     & 0 &  1  &  2  &  3  &  4  &  5  &  6  & 7  &  8  &  9  &  10 &  11  &  12  &  13  \\ \hline
$\dim_\Q \MtV_w$  & 1 &  0  &  1  &  2  &  3  &  5  &  8  & 13 &  21 &  34 &  55 &  89  &  144 &  233 \\ \hline
$\dim_\Q \MTV_w$  & 1 &  0  &  1  &  1  &  2  &  2  &  4  & 5  &  9  &  10 &  19 &  23  &  42  &  49 \\ \hline
$\dim_\Q \MSV_w$  & 1 &  0  &  1  &  2  &  3  &  4  &  6  & 10 &  15 &  22 &  32 &  52  &  76  &  ? \\ \hline
$\dim_\Q \MMV_w$  & 1 &  0  &  1  &  2  &  4  &  7  &  12 & 20 &  33 &  54 &  88 &  143 &  232 &  376 \\ \hline
$\dim_\Q \MMVe_w$ & 1 &  0  &  1  &  2  &  4  &  7  &  12 & 20 &  33 &  54 &  88 &  143 &  232 &  376 \\ \hline
$\dim_\Q \MMVo_w$ & 1 &  0  &  1  &  2  &  4  &  6  &  10 & 16 &  27 &  44 &  73 &  120 &  198 &  ? \\ \hline
\end{tabular}
\end{center}
}
\caption{Conjectural Dimensions of Various Subspaces of $\MMV$.}
\label{Table:dimMMV}
\end{table}

In summary, we have the following Venn diagram showing relations between all the above different variations of MZVs. Solid boundaries are all easily verified but the dashed boundaries mean the relations are still conjectural. So this diagram contains three conjectural relations:
\begin{equation*}
    \MSV\subseteq \MMVo,\quad  \MZV\subseteq \MtV\cap \MTV\cap \MSV, \quad \MMVe=\MMV.
\end{equation*}
where $\MZV$ is the $\Q$-vector space generated by MZVs.
Notice that the above conjectured relation in the middle implies \cite[Conj.~5.1]{KTA2019}
which says $\MZV\subseteq \MtV\cap \MTV$.

\begin{center}
\includegraphics[height=1.5in]{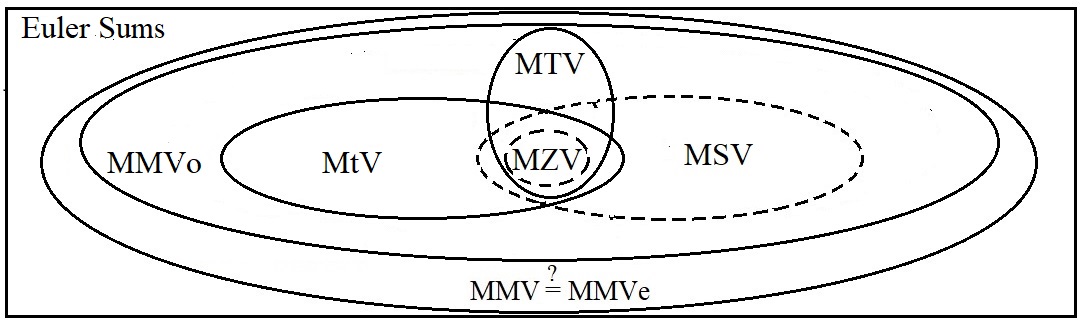}
\end{center}

\end{document}